\documentclass[reqno,a4paper,11pt]{amsart}

\usepackage[english]{babel}

\usepackage{amsfonts}
\usepackage{amsmath}
\usepackage{amsthm}
\usepackage{amssymb}
\usepackage{indentfirst}
\usepackage{xcolor}
\usepackage{tabularx}
\usepackage{graphicx}
\usepackage{esint}
\usepackage{dsfont}
\usepackage{bm}

\allowdisplaybreaks

\usepackage{soul}

\usepackage[a4paper,top=3cm,bottom=3cm,left=2.5cm,right=2.5cm]{geometry}

\usepackage{subcaption}

\usepackage{mathrsfs}
\usepackage{mathtools}
\usepackage{enumitem}
\usepackage[normalem]{ulem}

\usepackage{empheq}
\usepackage[most]{tcolorbox}

\usepackage{framed}

\usepackage[utf8]{inputenc}
\usepackage[T1]{fontenc}
\usepackage[english]{babel}

\numberwithin{equation}{section}

\theoremstyle{plain}
\begingroup
\theoremstyle{plain}
\newtheorem{theorem}{Theorem}[section]
\newtheorem{corollary}[theorem]{Corollary}

\newtheorem{lemma}[theorem]{Lemma}
\theoremstyle{definition}
\newtheorem{definition}[theorem]{Definition}

\theoremstyle{remark}
\newtheorem{remark}[theorem]{Remark}
\endgroup

\theoremstyle{definition}
\theoremstyle{remark}

\mathchardef\emptyset="001F

\newcommand{\R}{\mathbb{R}}

\newcommand{\spt}{\mathrm{spt}}

\newcommand{\di}{\mathrm{d}}

\newcommand{\Om}{\Omega}

\newcommand{\F}{\mathcal{F}}

\newcommand{\Lip}{\mathrm{Lip}}

\newcommand{\spann}{\mathrm{span}}

\definecolor{dred}{rgb}{.8,0,0}
\definecolor{ddmagenta}{rgb}{0.7,0,0.9}
\definecolor{ddcyan}{rgb}{0,0.2,1.0}
\definecolor{Orchid}{rgb}{0.7,0.4,0}
\definecolor{blue_links}{RGB}{13,0,180} 
\definecolor{lightblue}{RGB}{0.9,0.9,1}

\usepackage{hyperref}
\hypersetup{
    colorlinks=true, 
    linktoc=all,     
    linkcolor=blue_links,  
    citecolor=blue_links,
    urlcolor=blue_links,
}

\newcommand{\eeta}{\boldsymbol{\eta}}
\newcommand{\Pp}{\mathcal{P}}

\newcommand{\Vv}{\mathcal{V}}

\makeatletter
\@namedef{subjclassname@2020}{%
  \textup{2020} Mathematics Subject Classification}
\makeatother

\title[Scaling limits for Moran processes on metric strategy spaces]{Scaling limits for Moran processes on metric strategy spaces: an Eulerian derivation of pure replicator and Fleming--Viot measure-valued PDEs}

\author[S. Almi]{Stefano Almi}
\address[Stefano Almi]{Department of Mathematics and Applications ``R.~Caccioppoli'', University of Naples Federico II, Via Cintia, Monte S. Angelo, 80126 Napoli, Italy.}
\email{stefano.almi@unina.it}

\author[R. Durastanti]{Riccardo Durastanti}
\address[Riccardo Durastanti]{Department of Mathematics and Applications ``R.~Caccioppoli'', University of Naples Federico II, Via Cintia, Monte S. Angelo, 80126 Napoli, Italy.}
\email{riccardo.durastanti@unina.it}

\author[M.~Morandotti]{Marco Morandotti}
\address[Marco Morandotti]{Dipartimento di Scienze Matematiche ``G.~L.~Lagrange'', Politecnico di Torino, Corso Duca degli Abruzzi 24, 10129 Torino, Italy.}
\email{marco.morandotti@polito.it}

\author[G.~Orlando]{Gianluca Orlando}
\address[Gianluca Orlando]{Dipartimento di Meccanica, Matematica e Management, Politecnico di Bari, via E.~Orabona 4, 70125 Bari, Italy}
\email{gianluca.orlando@poliba.it}

\author[F.~Solombrino]{Francesco Solombrino}
\address[Francesco Solombrino]{Dipartimento di Scienze e Tecnologie Biologiche ed Ambientali Centro Ecotekne, via Lecce-Monteroni, 73047 Lecce, Italy}
\email{francesco.solombrino@unisalento.it}

\keywords{Moran processes, replicator dynamics, degenerate parabolic equations, Fleming--Viot operator, Kimura equation, genetic drift models}
\subjclass[2020]{35Q91, 91A22, 60J20, 35K65, 60J60, 60G57, 58D25, 30L99, 92D25, 49Q22}

\begin{document}

\begin{abstract}
We study the large-population limit of a discrete-time Moran process featuring multiple strategies, drawn from a possibly infinite strategy space~$\Vv$ (a metric space), under both weak and strong selection. In the Eulerian density formulation, the limiting dynamics depend critically on the relative scaling of population size, mutation rate, and selection intensity. Depending on these scalings, the limit behavior is governed either by a purely deterministic replicator-type continuity equation or by a diffusion-enhanced PDE. In the latter case, the diffusion operator recovers the classical Fleming–Viot operator and the Kimura equation as special instances. Methodologically, we derive the limit by reinterpreting the discrete process in Eulerian coordinates, constructing interpolating curves that satisfy an approximate PDE, and establishing convergence via a compactness argument in a suitable topology on the space of probability measures over probabilities over $\Vv$.
\end{abstract}

\maketitle

\bigskip

\section{Introduction}
\label{s:intro}

Predicting the macroscopic behavior of large systems of interacting agents is crucial in a number of scientific areas and applications, including biology~\cite{SmPr, TaJo}, population dynamics and opinion formation~\cite{DMPW, Toscani}, optimization and robotics~\cite{ABMS, Bullo2009, CCCBO, robots, perea, Pinnau}, artificial intelligence~\cite{Holland}, and economics~\cite{DMPW, JR}. In such systems, a set of strategies (for instance, genes within a population or portfolios in stock markets) is often provided that agents may choose and change over time, in order to optimize the individual outcome. Evolutionary games offer a solid mathematical framework to describe this kind of competing dynamics, going beyond the static notion of Nash equilibria~\cite{Nash}: strategies are selected by each agent based on {\em (i)} interactions among themselves and with the environment and {\em (ii)} certain instantaneous optimization principles.

From the biology point of view, evolutionary games based on fitness embody the Darwinian principle of natural selection~\cite{darwin1859origin}, which drives the evolution at the macroscopic scale. At the microscopic scale, instead, Kimura's neutral theory of molecular evolution~\cite{Kimura2} postulates that most evolutionary changes at the molecular level are selectively neutral: evolution is largely driven by random chance (genetic drift diffusion). A relevant analysis goal is to bridge these two scales: the interplay of selection and diffusion effects will emerge in a prototypical discrete evolutionary process through the identification of a suitable underlying time scale. To clarify our goal, we start by reviewing the two involved evolutionary mechanisms, namely the replicator dynamics (a time-continuous dynamics) and the Moran process (a time-discrete process).

\medskip

\paragraph{\bf Replicator dynamics and the Moran process.}
The replicator dynamics is a paramount example of an evolutionary game, where strategies replicate aiming at instantaneous maximization of a given payoff function. In its simplest form, the replicator equation writes as
\begin{align}
    \label{intro:replicator}
    \dot{\lambda} = b(\lambda) \,,\qquad b_{i} (\lambda) = ( ( A\lambda)_{i} - A\lambda \cdot \lambda) \lambda_{i} \quad \text{for $i=1, \ldots, M$.}
\end{align}
The ODE system~\eqref{intro:replicator} describes the evolution of the frequency $\lambda = (\lambda_{1}, \ldots, \lambda_{M})$ of pure strategies $U= \{ u_{1}, \ldots, u_{M}\}$ in a population of $N$ individuals, namely $\lambda_i=\frac1N \#\{\text{agents with strategy $u_i$}\}$. In classical applications to evolutionary biology, the selection of pure (or mixed) strategies is mediated by how their payoff relates to the average within the population. The entry $a_{ij}$ of the matrix $A \in \R^{M \times M}$ denotes the payoff of the strategy $u_{i}$ against $u_{j}$, and each component $b_{i}(\lambda)$ compares the payoff of a single strategy with the average payoff, represented by $A\lambda \cdot \lambda$. If the payoff of the individual strategy $u_i$ is larger than the average one, its frequency in the population rises (hence the replication); if the payoff of the individual strategy $u_i$ is smaller than the average one, the strategy is progressively suppressed. It is important to notice that the replicator dynamics described by~\eqref{intro:replicator} is deterministic. 
Diffusion effects are thus neglected, as they are typically slower than the transport term in~\eqref{intro:replicator} (see, e.g.,~\cite{AFMS, ChSo, HoSi, MoOr, Wei}).

In~\cite{ChMoRi, ChSo, MoOr} the replicator equation~\eqref{intro:replicator} has been linked to the Moran process~\cite{Moran} for a finite number of pure strategies. The Moran process is a time-discrete stochastic process which models the evolution of strategies within a population of $N$ individuals. 
For time steps equally spaced by $\tau>0$, at a given time step $t_{h} = h\tau$ an agent is selected to replicate their strategy, at the expenses of a randomly selected agent that must abandon their own. The probability of being chosen for reproduction is expressed in terms of {\em fitness} of the strategy, which in turn is a function of the payoff. To better understand the involved parameters and scalings, let us present the simplest scenario  $M=2$ (in the original, biological interpretation, two alleles $u_{1}$ and $u_{2}$ competing for dominance). The probability of reproduction is of the form
\begin{equation}\label{eq:prob}
\begin{split}
  \mathbb{P}\Big(\text{``a $u_i$-individual is chosen for reproduction''}\Big) & = \frac{\lambda_{i}f_{u_i}}{\lambda_{1} f_{u_1} + \lambda_{2} f_{u_2}} \, ,
\end{split}
\end{equation}
$f_{u_{i}}$ denoting the fitness of strategy~$u_{i}$. A typical choice~\cite{Nowak} is to write $f_{u_{i}}$ as a convex combination of the constant~$1$ and the payoff~$\pi_{u_{i}}$ of the strategy $u_{i}$, i.e.,
\begin{align}
\label{intro:fitness}
    f_{u_{i}} = (1 - w) + w \pi_{u_{i}}\,, \qquad w \in [0, 1]\,,
\end{align}
where, recalling the payoff matrix~$A$ from~\eqref{intro:replicator}, $\pi_{u_{i}}$ may be expressed as 
\begin{align}
\label{intro:payoff}
    \pi_{u_{i}} =  a_{i1} \lambda_{1} + a_{i2}\lambda_{2}\,, \qquad i=1, 2.
\end{align}
The extreme case $w=0$ is called \emph{neutral selection} and corresponds to uniform probability in~\eqref{eq:prob}; on the contrary, if $w=1$, then the fitness is solely determined by the payoff. 
Small values of~$w$ model a \emph{weak selection} regime, whereas large ones indicate \emph{strong selection}.

The discrete process is fully described by the stochastic evolution of the  vector $\lambda^{N,\tau}(t_h)=(\lambda^{N,\tau}_1(t_h),\lambda^{N,\tau}_2(t_h))$ collecting the proportions of strategies at discrete times $t_h=h\tau$. Since its components add up to one, $\lambda^{N,\tau}(t_h)$ is a random point in the simplex $\Delta^{1} \subset \R^2$. It has been shown in~\cite{MoOr} that the law $\Lambda^{N,\tau} (t) \in \Pp(\Delta^{1})$ of the piecewise affine interpolant $\lambda^{N,\tau} (t)$ converges to the (unique) solution to an Eulerian version of the replicator equation~\eqref{intro:replicator} in the form
\begin{align}
\label{intro:eulerian-replicator}
\partial_{t} \Lambda + {\rm div} (b\Lambda) = 0\,, \qquad \Lambda \in C([0, T]; \Pp(\Delta^{1}))\,,
\end{align}
with the vector field $b$ as in~\eqref{intro:replicator}. The result has been obtained under suitable scaling assumptions of the three involved parameters: the vanishing weak selection parameter $w$, the vanishing time step $\tau$, and the diverging number of agents $N$. The analysis hinges on two main simplifications. On the one hand, agents only rely on a finite number of pure strategies. On the other hand, the chosen scaling allows one to neglect higher order terms (specifically, diffusion effects), leading to a pure replicator dynamics. In order to quantify such scalings, let us mention (see \cite[Proposition~4.4]{MoOr} and Corollary~\ref{c:almost-equation} below) that the curve $t \mapsto \Lambda^{N,\tau} (t)$ solves in the sense of distribution the (approximate) equation
\begin{align}
    \label{intro:approx-replicator}
    \partial_t \Lambda^{N,\tau} + \frac{w}{\tau N} {\rm div} \big(  b  \Lambda^{N,\tau} \big) \approx 0 \,,
\end{align}
with an error of order $O(1/\tau N^{2})$, involving second-order derivatives of the test function. Thus, assuming $1/\tau N^{2} \to 0$ eliminates the diffusion terms in the limit.

\medskip

\paragraph{\bf Our goals: including diffusion and extending to infinitely many strategies.}
We develop an analytical framework to achieve a twofold goal. On the one hand, we include diffusion effects in the model. Indeed, the natural guess is that in the weak selection regime, under the scaling suggested by the previous discussion (which includes a vanishing selection parameter $w$), competition between the driving principles of natural selection and genetic drift diffusion may arise, generating a richer dynamics than a pure replicator equation. 

We also show, instead, that if $w$ remains positive (as in the strong selection regime) and the payoff function is non-negative (see Remark \ref{strong-sel} and Theorem \ref{t:compactness-3-strong}), selection prevails and the limit dynamics results in a pure replicator-type evolution with a  modified field $b$.

On the other hand, it is highly desirable to encompass an infinite number of strategies varying in a compact metric space $(\Vv, \di)$ in our analytical framework. This corresponds to a well-established point of view in many relevant applications. For instance, Nash's approach~\cite{Nash} to equilibria in non-cooperative games builds upon von Neumann’s notion of mixed strategy. In this setting, for the set of pure strategies $U = \{ u_{1}, \ldots, u_{M}\}$ as above, mixed strategies are represented by probability distributions in $\Vv= \Pp(U)$. Elements of $\Vv$ represent, for instance, the probability that players have of choosing a pure strategy or, in a financial context, a portfolio of investment options~\cite{Friedman}, or also the degree of influence exerted on other players~\cite{AAMS}. In evolutionary biology, a model of particular interest takes $(\Vv, \di)$ to be the continuous spectrum of acoustic frequencies available to birds. Here, fitness measures the reproductive returns of each frequency (or song), capturing its behavioral effectiveness~\cite{Podos and Warren}. We further refer to \cite[Introduction]{BNP} and the references therein for a discussion on dynamic models for continuous strategy spaces.

Summarizing with the present work we aim at developing the analytical framework to encompass infinitely many strategies in the Moran process and to treat different mutual scalings of $w$, $\tau$, and $N$ that allow for second-order effects. To the best of our knowledge, the analysis in the proposed setting is uncovered. Dealing with an infinite number of strategies will result in working with infinite dimensional spaces lacking linearity, with the typical structure of {\em Wasserstein over Wasserstein}~\cite{AAMS, ADS, ADS2, AFMS, MS2020}, that we are going to describe below.

\medskip

\paragraph{\bf Novel contribution.}
In order to be more precise on this aspect, let us briefly introduce the Moran process with an infinite pool of strategies, denoted by a compact metric space $(\Vv, \di)$. At time $t_{h} = h\tau$ the $N$ agents have strategies $\sigma_{n} \in \Vv$ for $n = 1, \ldots, N$. The distribution of strategies is represented by the empirical measure $\lambda^{N,\tau} (t_{h}) = \frac{1}{N} \sum_{n=1}^{N} \delta_{\sigma_{n}} \in \Pp(\Vv)$. Following~\eqref{intro:fitness}--\eqref{intro:payoff}, the fitness of the $n$-th agent to replicate its strategy~$\sigma_{n}$ is given by the convex combination
\begin{align}
\label{intro:fitness-infinite}
    F_{\lambda^{N,\tau} (t_{h}) } (\sigma_{n}) = (1 - w) + w \, \pi_{\lambda^{N,\tau} (t_{h})} (\sigma_{n})\,,
\end{align}
where the payoff function $\pi \colon \Vv \times \Pp(\Vv) \to \R$ is only assumed to be continuous with respect to $(\sigma, \lambda) \in \Vv \times \Pp(\Vv)$. Hence, the strategy $\sigma_{n}$ will replicate with probability
\begin{align*}
    \mathbb{P} \Big(\text{``strategy $\sigma_n$ is chosen for reproduction''}\Big) & =\frac{F_{\lambda^{N,\tau} (t_{h})} (\sigma_{n})}{\sum_{n'=1}^{N} F_{\lambda^{N,\tau} (t_{h})} (\sigma_{n'})}\,.
\end{align*}
Passing to the law $\Lambda^{N,\tau} (t)$ of the piecewise affine interpolant $\lambda^{N,\tau} \colon [0, T] \to \Pp (\Vv)$ of $\lambda^{N,\tau} (t_{h})$, we are left to work with continuous curves of probability measures $\Lambda^{N,\tau} \colon [0, T] \to \Pp(\Pp(\Vv))$, which reveal the {\em Wasserstein over Wassertein structure}. We refer to Section~\ref{S:pre} for further discussions.
The natural domain of our model is the mixed-strategy set $\Vv = \Pp(U)$. Crucially, the underlying analysis imposes no further restrictions beyond compactness and metrizability. We put this latitude to use in Section~\ref{s:spin}, where the strategy space is the continuous circle $\Vv = \mathbb{S}^{1}$ for an alignment problem in spin systems. This example, coupled with the ones preceding it, underscores that our framework cohesively unifies fitness-based interactions, regardless of whether strategies are discrete, continuous, or randomized.

To reveal the significant scalings of $w$, $\tau$, and $N$, we prove in Theorem~\ref{t:almost-equation} and Corollary~\ref{c:almost-equation} that, in the case of vanishing selection parameter, $\Lambda^{N,\tau}$ solves in distributional sense the approximate equation
\begin{align}
\label{intro:approx-eq-nostra}   
 \partial_{t} \Lambda^{N,\tau} + \frac{w}{\tau N}  {\rm div} \big(b(\lambda) \Lambda^{N,\tau} \big) - \frac{1}{\tau N^{2}} {\rm div}_{\lambda} \big( {\rm div}_{\lambda}\big( \mathcal{B}(\lambda) \Lambda^{N,\tau}  \big) \big) \approx 0\,,
\end{align}
 where
    \begin{align*}
        b(\lambda) = (\pi_\lambda - \overline{\pi}_\lambda) \lambda, \qquad \overline{\pi}_{\lambda} = \int_{\Vv} \pi_{\lambda} (\sigma) \, \di \lambda(\sigma) \quad \text{for $\lambda \in \Pp(\Vv)$}
    \end{align*}
    and, at least formally,
    \begin{align*}
      \big\langle {\rm div}_{\lambda} \big( {\rm div}_{\lambda}\big( \mathcal{B}(\lambda) \Lambda  \big) \big), \psi (t, \lambda) \big\rangle & = \frac{1}{2}\langle \mathcal{B}(\lambda) \Lambda , D^{2}_{\lambda} \psi (t, \lambda) \rangle  
    \\
    &
    = \frac{1}{2}\int_{\Pp(\Vv)} \int_{\Vv} \int_{\Vv} D^{2}_{\lambda} \psi (t, \lambda) [\delta_{\sigma} - \delta_{\overline{\sigma}} \ , \ \delta_{\sigma} - \delta_{\overline{\sigma}} ]  \, \di \lambda(\sigma) \, \di \lambda(\overline{\sigma}) \, \di \Lambda (\lambda) .
    \end{align*}
    Once again, equation~\eqref{intro:approx-eq-nostra} is solved up to an error of order $O(1/N^{2})$. We remark that one needs to be very careful in giving a meaning to equation~\eqref{intro:approx-eq-nostra}, as it is formulated on the state space~$\Pp(\Vv)$. Here we make use of the infinite dimensional calculus rules on convex sets developed in~\cite{AFMS}, which can be iteratively employed to define $C^{k}$-functions over $\Pp(\Vv)$ endowed with the Bounded-Lipschitz norm. We refer to Section~\ref{S:pre} for more details. 
    
    The approximate equation~\eqref{intro:approx-eq-nostra} reveals the relevant scalings of the involved parameters. Under the assumption
    \begin{align}
    \label{intro:scales}
        \frac{w}{\tau N} \to \gamma \geq0 \qquad \frac{1}{\tau N^{2}} \to \nu \geq 0
    \end{align}
    with $\gamma, \nu$ non-simultaneously $0$, we will show in Theorems~\ref{t:compactness-1}, ~\ref{t:compactness-3}, and \ref{t:compactness-3-strong} that $\Lambda^{N,\tau}$ converges to a continuous curve $\Lambda \colon [0, T] \to \Pp(\Pp(\Vv))$ which solves the replicator equation with genetic drift diffusion
    \begin{align}
    \label{intro:final-eq}
        \partial_{t} \Lambda + \gamma \, {\rm div}_{\lambda} ( b(\lambda) \Lambda) - \nu \, {\rm div}_{\lambda} ( {\rm div}_{\lambda} ( \mathcal{B} (\lambda) \Lambda)) = 0\,.
    \end{align}
    On the technical side, we remark that the crucial step in proving Theorems~\ref{t:compactness-1}, ~\ref{t:compactness-3}, and \ref{t:compactness-3-strong} is the compactness of the curve~$\Lambda^{N,\tau}$. As pointed out in~\cite[Remark~5.2]{MoOr} for the pure replicator dynamics, we cannot expect $\Lambda^{N,\tau}$ to be equi-continuous, calling for a more refined analysis. Moreover, differently from~\cite{MoOr}, the presence of the second order term in~\eqref{intro:approx-eq-nostra} and~\eqref{intro:final-eq} forces us to endow $\Pp (\Pp(\Vv))$ with a distance $\di_{\mathcal{Z}}$ constructed, in the spirit of the $1$-Wasserstein distance~$W_{1}$, by duality with test functions in
    \[
    \mathcal{Z}\coloneqq \bigl\{\psi \in C^{2}(\Pp(\Vv)) : {\rm Lip}(\psi) + {\rm Lip} (D_{\lambda} \psi) \leq 1 \bigr\}.
    \]
    We refer to~\eqref{e:distanceZ} and Lemma~\ref{l:dZ} below for the definition of~$\di_{\mathcal{Z}}$ and for some basic compactness properties of the metric space $(\Pp(\Pp(\Vv)), \di_{\mathcal{Z}})$.

    We conclude this introduction with a brief comparison of our result with the existing literature concerning the Moran process and replicator dynamics. First, we notice that the analysis of~\cite{MoOr} is obtained as a byproduct of Theorem~\ref{t:compactness-3} when $\Vv = U$ is finite and
    \begin{align*}
         \frac{w}{\tau N} \to \gamma >0 \qquad\text{and} \qquad \frac{1}{\tau N^{2}} \to  0\,.
    \end{align*}
    Moreover, the scalings detected in~\cite{ChSo} are in agreement with~\eqref{intro:scales} and the convergence results in Theorems ~\ref{t:compactness-1}, ~\ref{t:compactness-3}, and \ref{t:compactness-3-strong}. Indeed, setting
    \begin{align}
    \label{e:introscales}
        N \sim \tau^{-\alpha} \qquad \text{and}\qquad w\sim \tau^{\beta}\qquad \text{for $\alpha>0$ and $\beta \geq 0$},
    \end{align}
     we deduce that~$\Lambda^{N,\tau}$ converges to solutions to a pure replicator equation if $\alpha + \beta = 1$ and $\alpha >\frac{1}{2}$, while we obtain a replicator equation with genetic drift for $\alpha = \beta = \frac{1}{2}$. In this setting, our result may be seen as the rigorous counterpart of~\cite{ChSo} for a finite number of pure strategies. Such a scenario is discussed in detail in Section~\ref{s:pure}, where we show how our abstract setting reduces to the classical Fleming--Viot operator with $M$ competing pure strategies~\cite{DaMa, DoKu, Fle-Vio} and to the Kimura equation if $M=2$~\cite{Carrilloetal-2022, Casteras-Monsaingeon, Kimura1, Kimura2}. We summarize our findings on the reciprocal scales on~$w$, $\tau$, and~$N$ in~\eqref{e:introscales} in the following table.

\vspace{4mm}
\begin{center}
     \begin{tabular}{|c|c|p{6cm}|}
            \hline
            \textbf{Scaling Parameters} & \textbf{Asymptotic Rates} & \textbf{Macroscopic Model} \\
            \hline
            \hline
            $\alpha + \beta = 1$ & $\gamma > 0$ \footnotesize{(Selection)} & \textbf{Pure replicator dynamics} \\ 
            $\alpha > 1/2$       & $\nu = 0$ \footnotesize{(No Drift)}     & \footnotesize{Deterministic selection dominates. Diffusion vanishes.} \\ 
            \hline
            $\alpha = 1/2$       & $\gamma > 0$ \footnotesize{(Selection)} & \textbf{Replicator with drift diffusion} \\ 
            $\beta = 1/2$        & $\nu > 0$ \footnotesize{(Drift)}        & \footnotesize{Darwinian selection and Kimura's drift are perfectly balanced.} \\ 
            \hline
            $\alpha = 1/2$       & $\gamma = 0$ \footnotesize{(No Selection)} & \textbf{Pure genetic drift diffusion} \\ 
            $\beta > 1/2$        & $\nu > 0$ \footnotesize{(Drift)}        & \footnotesize{Selection is too weak; pure random fluctuations drive the population.} \\ 
            \hline
        \end{tabular}
        \end{center}
\vspace{3mm}

\medskip

\paragraph{\bf Outline of the paper.}
The paper is organized as follows: in Section~\ref{S:pre}, we collect the mathematical preliminaries concerning function spaces, differential calculus in $\Pp(\Vv)$, and introduce the distance $\di_{\mathcal{Z}}$ between elements of $\Pp(\Pp(\Vv))$.
In Section~\ref{s:discrete-process}, we describe the time-discrete Moran process and state the main convergence Theorems~\ref{t:compactness-1}, ~\ref{t:compactness-3}, and \ref{t:compactness-3-strong}. We devote Section~\ref{S:examples} to explicit examples that are of relevance in some applied contexts. In particular, we show that our framework fits with the theory of Fleming--Viot operators~\cite{Fle-Vio} for a finite or a continuum set of pure strategies~$\Vv$. In the case of 2 sole competing strategies, we recover the well-known Kimura equation~\cite{Kimura1, Kimura2}. Furthermore, we present in Section~\ref{s:spin} an application to spin systems. The proofs of Theorems~\ref{t:compactness-1},~\ref{t:compactness-3}, and \ref{t:compactness-3-strong} are contained in Sections~\ref{s:approximate equation} and~\ref{s:compactness}. Precisely, in Theorem~\ref{t:almost-equation} and Corollary~\ref{c:almost-equation} we rigorously prove~\eqref{intro:approx-eq-nostra}. In Section~\ref{s:compactness} we show compactness of the time-discrete curves and recover the replicator equation with genetic drift diffusion~\eqref{intro:final-eq}.

\section{Preliminaries}
\label{S:pre}

We fix $(\Vv, \di)$ a compact metric space. We denote by $\Pp(\Vv)$ the space of probability measures over $\Vv$. We will always endow $\Pp(\Vv)$ with the metric induced by the Bounded Lipschitz norm
\begin{align*}
\| \mu \|_{\mathrm{BL}} : = \sup\, \bigg\{\langle \mu, \varphi \rangle \,  : \, \varphi \in {\rm Lip} (\Vv), \, \| \varphi\|_{\rm Lip} \leq 1\bigg\} \qquad \text{for every $\mu \in ({\rm Lip} (\Vv) )^{*}$,}
\end{align*}
where $\| \varphi\|_{\rm Lip} : = \| \varphi\|_{\infty} + {\rm Lip} (\varphi)$ and ${\rm Lip} (\varphi)$ denotes the Lipschitz constant of~$\varphi$, namely, 
\begin{align*}
{\rm Lip} (\varphi) : = \sup_{x,y\in \Vv \atop x\neq y}\frac{| \varphi ( x ) - \varphi ( y )|}{\di (x,y)}\,.
\end{align*}
We further denote by $\F(\Vv)$ the so-called Arens--Eells space 
\begin{align*}
\F( \Vv) : =  \overline{ \spann ( \Pp (\Vv) ) }^{\|\cdot\|_{\mathrm{BL}}} \subseteq (\mathrm{Lip}(\Vv))^{*}.
\end{align*}
In view of~\cite{AFMS, Ambrosio-Puglisi}, $( \F(\Vv), \| \cdot\|_{\mathrm{BL}})$ is a separable Banach space. Since $\Vv$ is compact, $(\Pp(\Vv), \| \cdot\|_{\rm BL})$ is a convex compact subset of~$\F(\Vv)$. Moreover, denoting with $\mathrm{d}_{\mathrm{BL}}$ the distance on $\mathcal{P}(\mathcal{V})$ induced by the Bounded Lipschitz norm, thanks to the compactness of $\mathcal{V}$, we have that $\mathrm{d}_{\mathrm{BL}}$ and the 1-Wasserstein distance are equivalent. Thus, $(\mathcal{P}(\mathcal{V}), \| \cdot\|_{\rm BL})$ and $(\mathcal{P}(\mathcal{V}), W_1)$ are the same topological space. We further denote by $E_{\Pp(\Vv)}$ the closed vector space $E_{\Pp(\Vv)} := \overline{\R (\Pp(\Vv) - \Pp(\Vv))}^{\| \cdot\|_{\rm BL}}$, i.e., the closure with respect to the ${\rm BL}$-norm of the set $\{\alpha ( \lambda_{1} - \lambda_{2}) : \, \alpha \in \R, \, \lambda_{1},\lambda_{2} \in \Pp(\Vv)\}$, which is a vector subspace thanks the convexity of $\mathcal{P}(\Vv)$. Notice that $(E_{\Pp(\Vv)}, \| \cdot\|_{\rm BL})$ is a closed subspace of $(\mathcal{F} (\Vv), \| \cdot\|_{\rm BL})$, and therefore is Banach and separable. We denote by $(E_{\Pp(\Vv)})^{*}$ its dual space.

Following~\cite{AFMS}, given $(Y, \| \cdot\|_{Y})$ a normed space, we say that a function $\psi \colon \Pp(\Vv) \to Y$ is $C$-differentiable if for every $\lambda \in \Pp(\Vv)$ there exists $D \psi (\lambda) \in \mathcal{L} ( E_{\Pp(\Vv)} ; Y)$ such that
\begin{align*}
    \lim_{\lambda' \to \lambda } \, \frac{\psi(\lambda') - \psi(\lambda) - D\psi(\lambda) [\lambda' - \lambda]}{ \| \lambda' - \lambda\|_{\rm BL}}  = 0\,.
\end{align*}
We say that $\psi \in C^{1} (\Pp(\Vv); Y)$ if $\psi$ is $C$-differentiable and the map $D\psi \colon \Pp(\Vv) \to  \mathcal{L} ( E_{\Pp(\Vv)} ; Y)$ is continuous, where $\mathcal{L} ( E_{\Pp(\Vv)} ; Y)$ is endowed with the standard norm for linear operators between normed spaces. Arguing recursively, we are able to define $C^{k}$-functions from $\Pp(\Vv)$ to $Y$ for every $k \in \mathbb{N}$. In the sequel, we will often deal with $\psi \in C^{2} (\Pp(\Vv))$ (real-valued functions). In this case, $D\psi \colon \Pp(\Vv) \to (E_{\Pp(\Vv)})^{*}$ and $D^{2} \psi \colon  \Pp(\Vv) \to \mathcal{L} (E_{\Pp(\Vv)}; (E_{\Pp(\Vv)})^{*})$ are well-defined and continuous. As shown in~\cite[Lemma~4.4]{ADS} it holds that
\begin{align*}
    \| D\psi \|_{L^{\infty} (\Pp(\Vv))} \leq {\rm Lip} (\psi) \qquad \text{and} \qquad  \| D^{2}\psi \|_{L^{\infty} (\Pp(\Vv))}  \leq {\rm Lip} (D\psi)
\end{align*}
for every $\psi \in C^{2} (\Pp(\Vv))$. As we will often consider test function $\psi \in C^{2}([0, T] \times \Pp(\Vv))$ also depending on time, we will denote by $D_{\lambda} \psi (t, \lambda)$ and $D^{2}_{\lambda} \psi (t, \lambda)$ the $C$-differentials of the map $\psi (t, \cdot)$ for fixed $t \in [0, T]$.

Later in this work we will further consider the set $\Pp(\Pp(\Vv))$. Notice that $\Pp(\Pp(\Vv))$ can be endowed with the $1$-Wasserstein distance
\begin{align*}
    W_{1} (\Lambda_{1}, \Lambda_{2}) := \sup\, \bigg\{ \int_{\Pp(\Vv)} \varphi(\lambda) \, \di (\Lambda_{1} - \Lambda_{2})(\lambda) : \, \varphi \in {\rm Lip}(\Pp(\Vv)) \text{ with ${\rm Lip} (\varphi) \leq 1$} \bigg\}\,
\end{align*}
where the Lipschitz constant is computed with respect to the ${\rm BL}$-norm. Notice that $(\Pp(\Pp(\Vv)), W_{1})$ is a compact metric space.

We denote by $\mathcal{Z}$ the set
\begin{align*}
        \mathcal{Z} := \big\{ \psi \in C^{2}(\Pp(\Vv)): \, {\rm Lip} (\psi) + {\rm Lip} (D\psi) \leq 1\big\}\,.
\end{align*}
We introduce in $\Pp(\Pp(\Vv))$ the following function $\di _{\mathcal{Z}} \colon \Pp(\Pp(\Vv)) \times \Pp(\Pp(\Vv)) \to [0, +\infty)$:
\begin{align}
\label{e:distanceZ}
    \di_{\mathcal{Z}} ( \Lambda_{1}, \Lambda_{2}) := \sup\left\{\int_{\Pp(\Vv)} \psi(\lambda) \, \di (\Lambda_{1} - \Lambda_{2}) (\lambda) : \, \psi \in \mathcal{Z}\right\}\qquad \text{for $\Lambda_{1}, \Lambda_{2} \in \Pp(\Pp(\Vv))$}.
\end{align}
In the next two lemmas we prove that $\di_{\mathcal{Z}}$ is indeed a distance on~$\Pp(\Pp(\Vv))$.

\begin{lemma}
    \label{l:density}
    The space $C^{\infty} (\Pp(\Vv))$ is dense in ${\rm Lip} (\Pp(\Vv))$ with respect to the $L^{\infty}(\Pp(\Vv))$-norm.
\end{lemma}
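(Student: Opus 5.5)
The plan is a Stone--Weierstrass argument on the compact metric space $(\Pp(\Vv), \|\cdot\|_{\rm BL})$, using cylindrical functions of the form
\begin{align*}
\psi(\lambda) = G\big(\langle \lambda, \varphi_{1}\rangle, \ldots, \langle \lambda, \varphi_{k}\rangle\big), \qquad G \in C^{\infty}(\R^{k}),\ \varphi_{i} \in {\rm Lip}(\Vv).
\end{align*}
Since the target is density in the sup norm, the Lipschitz regularity of the approximated function plays no role beyond continuity. Indeed, ${\rm Lip}(\Pp(\Vv)) \subset C(\Pp(\Vv))$, and it suffices to show that $C^{\infty}(\Pp(\Vv))$ contains a subalgebra that separates points and contains constants. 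Stone--Weierstrass then gives density in $C(\Pp(\Vv))$ with respect to $L^{\infty}$, and a fortiori in ${\rm Lip}(\Pp(\Vv))$.

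\textbf{Step 1: linear functionals are smooth.} For $\varphi \in {\rm Lip}(\Vv)$, the map $\ell_{\varphi}(\lambda) := \langle \lambda, \varphi\rangle$ is the restriction to $\Pp(\Vv)$ of a bounded linear functional on $\F(\Vv)$. Its norm is at most $\|\varphi\|_{\rm Lip}$, by the definition of $\|\cdot\|_{\rm BL}$. Hence $\ell_{\varphi}$ is $C$-differentiable with constant differential $D\ell_{\varphi}(\lambda) = \varphi|_{E_{\Pp(\Vv)}} \in (E_{\Pp(\Vv)})^{*}$, and all higher differentials vanish, so $\ell_{\varphi} \in C^{\infty}(\Pp(\Vv))$.

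\textbf{Step 2: the class is an algebra.} Cylindrical functions are smooth by the chain rule for $C$-differentiability. One has to check that the composition of a smooth map $\R^{k} \to \R$ with the $C^{\infty}$ map $\lambda \mapsto (\ell_{\varphi_{1}}(\lambda), \ldots, \ell_{\varphi_{k}}(\lambda))$ is $C^{k}$ for every $k$. This holds because the inner map is affine and Lipschitz: the first-order expansion of $G$ combined with the linearity of $\ell_{\varphi_{i}}$ yields
\begin{align*}
D\psi(\lambda) = \sum_{i} \partial_{i} G(\cdots)\, \varphi_{i},
\end{align*}
with continuous dependence on $\lambda$, and the argument iterates to all orders. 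Sums and products of cylindrical functions are again cylindrical, since one can concatenate the lists of $\varphi_{i}$. Constants belong to the class trivially.

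\textbf{Step 3: point separation.} If $\lambda_{1} \neq \lambda_{2}$ in $\Pp(\Vv)$, then $\|\lambda_{1} - \lambda_{2}\|_{\rm BL} > 0$. By definition of this norm there exists $\varphi \in {\rm Lip}(\Vv)$ with $\langle \lambda_{1} - \lambda_{2}, \varphi\rangle \neq 0$, so $\ell_{\varphi}$ separates the two points. Alternatively, Lipschitz functions are dense in $C(\Vv)$ and measures are determined by their action on continuous functions. Since $\Pp(\Vv)$ is compact, Stone--Weierstrass applies and concludes the proof.

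\textbf{Main obstacle.} The only delicate point is Step 2: verifying rigorously that the cylindrical functions are $C^{k}$ in the sense of~\cite{AFMS} for every $k$. This requires the differentials to be continuous into the appropriate spaces of multilinear operators with the operator norm. I expect this to follow from uniform continuity of $\partial^{\alpha} G$ on the compact image of $\Pp(\Vv)$ under $(\ell_{\varphi_{1}}, \ldots, \ell_{\varphi_{k}})$, combined with the bound $|\ell_{\varphi_{i}}(\mu)| \le \|\varphi_{i}\|_{\rm Lip}\|\mu\|_{\rm BL}$.
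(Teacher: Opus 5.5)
Your proposal is correct and follows the paper's proof: both apply Stone--Weierstrass on the compact space $(\Pp(\Vv),\|\cdot\|_{\rm BL})$ to an algebra of cylindrical functions built from bounded linear functionals, which contains the constants and separates points. The only differences are minor. The paper uses polynomials in elements of $(\F(\Vv))^{*}$, while you use smooth $G$ applied to pairings with Lipschitz $\varphi_i$, and these pairings are such elements. You also spell out the $C^\infty$ verification that the paper simply asserts.
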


\begin{proof}
    We may consider the algebra $\mathcal{A}$ of functions of the form
    \begin{align*}
        \psi ( \varphi_{1} (\lambda), \ldots, \varphi_{n} (\lambda)) \qquad \lambda \in \Pp(\Vv)\,,
    \end{align*}
    where $\varphi_{1}, \ldots, \varphi_{n} \in (\mathcal{F}(\Vv))^{*}$ and $\psi$ is a polynomial of $n$ variables. Notice that such functions belong to $C^{\infty} (\Pp(\Vv))$ (even to $C^{\infty} (\mathcal{F} (\Vv))$). Clearly, $\mathcal{A}$ contains constant functions. Moreover, $\mathcal{A}$ separates the points of~$\Pp(\Vv)$. Hence, a direct application of Stone-Weierstrass implies that $\mathcal{A}$ is dense in the set of continuous functions $C(\Pp(\Vv))$ in the $L^{\infty} (\Pp(\Vv))$-norm. This concludes the proof of the lemma.
\end{proof}

\begin{lemma}
\label{l:dZ}
    The space $(\Pp(\Pp(\Vv)), \di_{\mathcal{Z}})$ is a compact metric space. Moreover, 
    \begin{equation}
    \label{e:111}
        \di_{\mathcal{Z}} (\Lambda_1, \Lambda_2) \leq W_{1}( \Lambda_1, \Lambda_2) \qquad \text{for every $\Lambda_1, \Lambda_2 \in \Pp(\Pp(\Vv))$}\,.
    \end{equation}
\end{lemma}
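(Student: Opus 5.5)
We first prove \eqref{e:111}, then that $\di_{\mathcal{Z}}$ is a distance, and finally deduce compactness from the comparison with $W_1$.

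\emph{Inequality \eqref{e:111}.} Every $\psi\in\mathcal{Z}$ satisfies ${\rm Lip}(\psi)\le 1$, so $\psi$ is admissible in the supremum defining $W_1$. Hence $\di_{\mathcal{Z}}\le W_1$.

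\emph{Metric axioms.} Finiteness follows from \eqref{e:111}. Nonnegativity and symmetry hold because $\mathcal{Z}=-\mathcal{Z}$. The triangle inequality holds because $\di_{\mathcal{Z}}$ is a supremum of linear functionals of $\Lambda_1-\Lambda_2$.

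\emph{Definiteness.} This is the main step. Assume $\di_{\mathcal{Z}}(\Lambda_1,\Lambda_2)=0$. Since the constraint defining $\mathcal{Z}$ is homogeneous, for every $\psi\in C^2(\Pp(\Vv))$ with ${\rm Lip}(\psi)+{\rm Lip}(D\psi)<\infty$ we have $\int\psi\,\di(\Lambda_1-\Lambda_2)=0$, after rescaling $\psi$ into $\mathcal{Z}$. We then want equality for all $\psi\in C(\Pp(\Vv))$, which forces $\Lambda_1=\Lambda_2$ because $\Pp(\Vv)$ is a compact metric space. By Lemma~\ref{l:density}, or more precisely by its proof, the algebra $\mathcal{A}$ of polynomials in finitely many $\varphi_i\in(\F(\Vv))^*$ is dense in $C(\Pp(\Vv))$. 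So it suffices to check that each element of $\mathcal{A}$ is $C^2$ with Lipschitz value and Lipschitz differential on $\Pp(\Vv)$.

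Take $\psi(\lambda)=p(\varphi_1(\lambda),\dots,\varphi_n(\lambda))$. Each $\varphi_i$ is a bounded linear functional, so $\lambda\mapsto(\varphi_i(\lambda))_i$ maps the compact set $\Pp(\Vv)$ into a bounded set $K\subset\R^n$. By the chain rule,
\[
D\psi(\lambda)[\mu]=\sum_i\partial_ip(\varphi(\lambda))\,\varphi_i(\mu),
\]
and a similar formula holds for $D^2\psi$. These derivatives are bounded in operator norm by $\sup_K|\nabla p|\,\max_i\|\varphi_i\|$ and its second-order analogue. By the mean value theorem on the convex set $\Pp(\Vv)$, $\psi$ and $D\psi$ are Lipschitz with respect to the BL norm. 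This gives definiteness.

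\emph{Compactness.} $(\Pp(\Pp(\Vv)),W_1)$ is compact, and by \eqref{e:111} the identity map $(\Pp(\Pp(\Vv)),W_1)\to(\Pp(\Pp(\Vv)),\di_{\mathcal{Z}})$ is $1$-Lipschitz, hence continuous. A continuous image of a compact space is compact, so $(\Pp(\Pp(\Vv)),\di_{\mathcal{Z}})$ is compact. As a byproduct, since the identity is a continuous bijection from a compact space onto a Hausdorff space, the two metrics induce the same topology.

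The only delicate point is the Lipschitz bound for $D\psi$, which relies on boundedness of $p$'s derivatives on $K$. This bound follows directly from the compactness of $\Pp(\Vv)$ in $\F(\Vv)$.
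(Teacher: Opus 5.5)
Your proof is correct and follows the paper's argument. You obtain \eqref{e:111} directly from the definitions, get definiteness from the Stone--Weierstrass density of the cylindrical polynomial algebra in Lemma~\ref{l:density}, and inherit compactness from $(\Pp(\Pp(\Vv)),W_{1})$. The only difference is that you verify by hand that elements of that algebra lie in $\mathrm{Span}(\mathcal{Z})$, whereas the paper invokes the blanket identity $\mathrm{Span}(\mathcal{Z})=C^{2}(\Pp(\Vv))$; your extra remark that $\di_{\mathcal{Z}}$ and $W_{1}$ induce the same topology is exactly what the paper uses implicitly in Section~\ref{s:compactness}.
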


\begin{proof}
    Inequality \eqref{e:111} is a direct consequence of the definition of~$\di_{\mathcal{Z}}$ and of $W_{1}$. The map $\di_{\mathcal{Z}}$ is clearly symmetric and satisfies the triangle inequality. If $\Lambda_{1}, \Lambda_{2} \in \Pp(\Pp(\Vv))$ are such that $\di_{\mathcal{Z}} (\Lambda_{1}, \Lambda_{2}) = 0$, then
    \begin{align}
    \label{e:zero-distance}
        \int_{\Pp(\Vv)} \psi(\lambda) \, \di (\Lambda_{1} - \Lambda_{2}) (\lambda) = 0 \qquad \text{for every $\psi \in \mathcal{Z}$.}
    \end{align}
    Hence, \eqref{e:zero-distance} is satisfied for every $\psi \in {\rm Span}(\mathcal{Z})$. Notice that, since $\Pp(\Vv)$ is compact, ${\rm Span}(\mathcal{Z}) = C^2(\Pp(\Vv))$. Therefore, \eqref{e:zero-distance} holds for every $\psi \in C^2(\Pp(\Vv))$.
    Thanks to Lemma~\ref{l:density}, we may extend~\eqref{e:zero-distance} to test functions $\psi \in {\rm Lip} (\Pp(\Vv))$. This in turn implies that also $W_{1} (\Lambda_{1} , \Lambda_{2}) = 0$ and $\Lambda_{1} = \Lambda_{2}$. Hence, $\di_{\mathcal{Z}}$ is a metric on~$\Pp(\Pp(\Vv))$. The compactness of~$(\Pp(\Pp(\Vv)) , W_{1})$ yields the compactness of $(\Pp(\Pp(\Vv)), \di_{\mathcal{Z}})$.
\end{proof}

\section{Description of the time-discrete process and main results}
\label{s:discrete-process}
This section describes the abstract Moran process for a finite number of agents $N$ with a possibly infinite pool of strategies~$\Vv$, as well as the main results of the paper concerning its asymptotic behavior in terms of the reciprocal scalings among the diverging number of agents and the vanishing time-step~$\tau$ and selection parameter~$w$. 

\subsection{Time-discrete process}
In this first subsection we describe the time-discrete Moran process. We start by introducing the initial conditions, as well as the fitness and payoff functions. In the last paragraph we describe the death-birth process.

\paragraph{\bf Initial condition.} 
Let us fix $\Lambda_{0} \in \Pp(\Pp(\Vv))$ as initial datum. By Skorokhod representation theorem (see, e.g., \cite[Section~6]{Billingsley}), there exists a probability space $(\Omega, \mathbb{P}, \mathcal{F})$ and a measurable map $\lambda_0 \colon \Omega \to \Pp(\Vv)$ such that $\Lambda_{0} = (\lambda_{0})_{\#} \mathbb{P}$. We consider a finite time horizon $T>0$. For every $k \in \mathbb{N} \setminus \{0\}$ we define $\tau_{k} = \frac{T}{k}$ and let $t^{k}_{m} = m \tau_{k}$ for every $m \in\{ 1, \ldots, k\}$. We approximate the initial condition thanks to the following lemma.

\begin{lemma}
    \label{l:selection}
    Let $(\Omega, \mathbb{P}, \mathcal{F})$, $\lambda_{0} \colon \Omega \to \Pp(\Vv)$ measurable, and $\Lambda_{0} = (\lambda_{0})_{\#} \mathbb{P}$. Then, there exists a sequence of measurable maps $\sigma_{0, n}^{N} \colon \Omega \to \Vv$, $n\in \{1, \ldots, N\}$ and $N \in \mathbb{N}$, such that, defining $\lambda_{0}^{N}:= \frac{1}{N} \sum_{n=1}^{N}\delta_{\sigma^{N}_{0, n}} \in \Pp(\Vv)$ and $\Lambda_{0}^{N} := (\lambda_{0}^{N})_{\#} \mathbb{P} \in \Pp(\Pp(\Vv))$ it holds $W_{1} (\Lambda_{0}^{N}, \Lambda_{0}) \to 0$ as $N \to \infty$. 
\end{lemma}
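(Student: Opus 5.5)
The plan is to build, for each fixed $\omega$, an empirical approximation of $\lambda_0(\omega)$ by $N$ atoms that depends measurably on $\omega$, with an error in $\|\cdot\|_{\rm BL}$ that is uniform in $\omega$. The convergence of the laws then follows from the pointwise bound
\[
W_1(\Lambda_0^N,\Lambda_0)\le \int_\Omega \|\lambda_0^N(\omega)-\lambda_0(\omega)\|_{\rm BL}\,\di\mathbb{P}(\omega).
\]
This bound holds because for any $\varphi$ with ${\rm Lip}(\varphi)\le 1$ on $(\Pp(\Vv),\|\cdot\|_{\rm BL})$ we have $\int\varphi\,\di(\Lambda_0^N-\Lambda_0)=\int_\Omega[\varphi(\lambda_0^N)-\varphi(\lambda_0)]\,\di\mathbb{P}$. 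So it suffices to show that $\sup_\omega\|\lambda_0^N(\omega)-\lambda_0(\omega)\|_{\rm BL}\to 0$; pointwise convergence together with dominated convergence would also suffice, since the BL distance between probabilities is bounded by $2$.

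\textbf{Step 1: partition of $\Vv$.} Fix $\varepsilon>0$. By compactness, choose a finite Borel partition $\{A_1,\dots,A_K\}$ of $\Vv$ into sets of diameter at most $\varepsilon$, and pick representatives $v_i\in A_i$. Take $N$ large relative to $K$, with the precise relation to be fixed at the end. For $\omega\in\Omega$ set $p_i(\omega)=\lambda_0(\omega)(A_i)$. The map $\omega\mapsto p_i(\omega)$ is measurable, since $\lambda\mapsto\lambda(A)$ is Borel on $\Pp(\Vv)$ with the weak topology and $\lambda_0$ is measurable.

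\textbf{Step 2: rounding.} Round the weights to multiples of $1/N$ by a measurable rule, for instance $m_i(\omega)=\lfloor Np_i(\omega)\rfloor$ for $i<K$ and $m_K=N-\sum_{i<K}m_i$. This gives $\sum_i|m_i/N-p_i|\le 2K/N$. Then assign the atoms: $\sigma^N_{0,n}(\omega)=v_i$ whenever $\sum_{j<i}m_j(\omega)<n\le\sum_{j\le i}m_j(\omega)$. Each $\sigma^N_{0,n}$ is a finite-valued function defined through measurable conditions on the $m_j$, hence measurable.

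\textbf{Step 3: estimate.} For $\varphi$ with $\|\varphi\|_{\rm Lip}\le 1$,
\[
\Big|\langle\lambda_0^N-\lambda_0,\varphi\rangle\Big|\le \sum_i\Big|\tfrac{m_i}{N}-p_i\Big|\,|\varphi(v_i)|+\sum_i\int_{A_i}|\varphi(v_i)-\varphi(v)|\,\di\lambda_0(v)\le \tfrac{2K}{N}+\varepsilon .
\]
To make this work along a single sequence, use a diagonal choice: take $\varepsilon_j\downarrow 0$ with associated partitions of cardinality $K_j$, and for $N$ in a block $[N_j,N_{j+1})$, with $N_j\ge K_j/\varepsilon_j$, use the $j$-th partition. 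This yields a uniform error of at most $3\varepsilon_j\to0$, and hence $W_1(\Lambda_0^N,\Lambda_0)\to0$.

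\textbf{Main obstacle.} The only delicate point is measurability. It is guaranteed by making every choice explicit (a fixed partition, fixed representatives, floor-based rounding), rather than appealing to an abstract selection theorem.
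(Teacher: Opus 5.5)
Your argument is correct, and it follows a genuinely different route from the paper's.

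\textbf{The paper's route.} It is non-constructive. For each $\omega$ it considers the set $T_N(\omega)\subseteq\mathcal{V}^N$ of \emph{optimal} configurations, i.e.\ minimizers of $\theta\mapsto W_1\bigl(\lambda_0(\omega),\frac1N\sum_{n}\delta_{\theta_n}\bigr)$. It proves that this argmin multifunction is measurable by computing infima over a countable dense set $D^N$, and extracts a measurable selection by a Castaing--Valadier type theorem. It then invokes a result from the literature: the optimal $N$-point quantization error of each fixed $\lambda_0(\omega)$ tends to $0$. It closes with dominated convergence and the same bound $W_1(\Lambda^N_0,\Lambda_0)\le\mathbb{E}\,\|\lambda^N_0-\lambda_0\|_{\rm BL}$ that you use.

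\textbf{Your route.} You write down an explicit quantizer: a fixed partition, fixed representatives, and floor rounding. Measurability then reduces to Borel measurability of $\lambda\mapsto\lambda(A_i)$ and of $\lfloor\cdot\rfloor$. You get the error bound $\varepsilon+2K(\varepsilon)/N$ uniformly in $\omega$, with a rate governed by the covering numbers of $\mathcal{V}$. So neither dominated convergence nor the external quantization result is needed.

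\textbf{What each buys.} The paper's route yields a best $N$-point approximation for every $\omega$, which is more than the lemma requires. Yours is self-contained. It also sidesteps verifying the selection theorem's hypotheses, a step that needs some care. For an open set $A$, the condition $\inf_{\theta\in D^N\cap A}(\cdots)=G_N(\omega)$ used in the paper actually characterizes $T_N(\omega)\cap\overline{A}\neq\emptyset$, not $T_N(\omega)\cap A\neq\emptyset$. This is easily repaired by testing with closed sets instead: over a compact set the infimum is attained and can be computed on a countable dense subset, and every open set is a countable union of closed ones. Your explicit construction avoids the issue altogether.
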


\begin{proof}
    Let us fix $D$ a dense and at most countable subset of $\Vv$. For $N \in \mathbb{N}$ we assume that $\Vv^{N}$ is endowed with the distance function $\di_{N} \colon \Vv^{N} \times \Vv^{N} \to [0, +\infty)$
    \begin{align*}
        \di_{N} (\theta, \sigma) := \left(\sum_{n=1}^{N} (\di (\theta_{n}, \sigma_{n}))^{2} \right)^{1/2} \qquad \text{for every $\theta, \sigma \in \Vv^{N}$.}
    \end{align*}
    Thus, the set $D^{N}$ is dense in $\Vv^{N}$. We consider the map $G_{N}\colon \Omega \to [0, +\infty)$ defined by
    \begin{align*}
        G_{N}(\omega) :=  \inf_{\theta \in D^{N}} \, W_{1} \left( \lambda_{0} (\omega) , \frac{1}{N} \sum_{n=1}^{N} \delta_{\theta_{n}} \right) = \min_{\theta \in \Vv^{N}}  W_{1} \left( \lambda_{0} (\omega) , \frac{1}{N} \sum_{n=1}^{N} \delta_{\theta_{n}} \right)\,.
    \end{align*}
    Notice that the second inequality follows by continuity of $W_{1}$ and density of~$D$. The function~$G_{N}$ is measurable over $\Omega$, since it is the infimum of a sequence of measurable functions. 

    We define the multi-function $T_{N} \colon \Omega \to \Vv^{N}$ as
    \begin{align*}
        T_{N} (\omega) := \left\{ \theta \in \Vv^{N}: W_{1} \left(\lambda_{0} (\omega) , \frac{1}{N} \sum_{n=1}^{N} \delta_{\theta_{n}} \right) = G_{N}(\omega) \right\} \,. 
    \end{align*}
    For every open set $A \subseteq \Vv^{N}$ we have that
    \begin{align}
        \label{e:a condition}
        \left\{ \omega \in \Omega: \, T_{N} (\omega) \cap A \neq \emptyset \right\} = \left\{ \omega \in \Omega: \, \inf_{\theta \in D^{N} \cap A} \, W_{1} \left(\lambda_{0}(\omega) , \frac{1}{N} \sum_{n=1}^{N} \delta_{\theta_{n}} \right) = G_{N} (\omega)\right\}.
    \end{align}
    Hence, the set $\{\omega \in \Omega : \, T_{N} (\omega) \cap A \neq \emptyset\}$ is a measurable subset of~$\Omega$. By~\cite[Theorem III.6]{Castaign-Valadier}, there exists a measurable selection $\sigma^{N}_{0} (\omega)  \in T_{N} (\omega)$.

    Let us set $\lambda_{0}^{N} (\omega) := \frac{1}{N} \sum_{n=1}^{N} \delta_{\sigma^{N}_{0, n} (\omega)}$ and $\Lambda^{N}_{0} := (\lambda^{N}_{0})_{\#} \mathbb{P}$. To conclude, it is enough to notice that (see, e.g.,~\cite[Step~2, Proof of Theorem~3.5]{MS2020})
    \begin{align*}
        \lim_{N \to \infty} \inf_{\theta \in \Vv^{N}} \, W_{1} \left(\lambda_{0} (\omega) , \frac{1}{N} \sum_{n=1}^{N} \delta_{\theta_{n}} \right) = 0 \qquad \text{for $\omega \in \Omega$}
    \end{align*}
    and that, by compactness of~$\Vv$, $W_{1} (\lambda_{0} (\omega) , \lambda^{N}_{0} (\omega) ) \leq 2 \, {\rm diam} (\Vv)$. Hence,
    \begin{align*}
        W_{1} (\Lambda^{N}_{0} , \Lambda_{0}) & = \sup \bigg\{ \int_{\Pp(\Vv)} \varphi (\lambda) \, \di (\Lambda^{N}_{0} - \Lambda_{0}) (\lambda) : \, \varphi \in {\rm Lip} (\Pp(\Vv)) , \, {\rm Lip} (\varphi) \leq 1\bigg\} 
        \\
        &
        \vphantom{\int_{\Pp(\Vv)}} = \sup \Big\{ \mathbb{E}  \big( \varphi (\lambda_{0}) - \varphi (\lambda^{N}_{0})  \big)  : \varphi \in {\rm Lip} (\Pp(\Vv)) , \, {\rm Lip} (\varphi) \leq 1 \Big\}
        \\
        &
        \leq \vphantom{\int_{\Pp(\Vv)}}  \mathbb{E}  \big( \|  \lambda_{0} - \lambda^{N}_{0}\|_{\rm BL}  \big) \leq  \mathbb{E}  \big( W_{1} ( \lambda_{0} , \lambda^{N}_{0} )  \big)\,.
    \end{align*}
    By the Dominated Convergence Theorem, we conclude that $W_{1} (\Lambda^{N}_{0} , \Lambda_{0}) \to 0$ as $N \to \infty$.
\end{proof}

We fix from now on a sequence of initial data $\Lambda^{N}_{0} \in \Pp(\Pp(\Vv))$ as in the statement of Lemma~\ref{l:selection}.

\medskip

\paragraph{\bf Fitness and payoff functions.}
In order to describe the Moran time-discrete process, we introduce the following {\em fitness} function. For $\lambda \in \Pp(\Vv)$, we define $F_{\lambda} \colon \Vv \to \R$ as
\begin{align}
\label{e:Flambda}
F_{\lambda} (\sigma) & = 1 + w ( \pi_{\lambda} (\sigma) -1) \qquad \text{for $\sigma \in \Vv$ and $\lambda \in \Pp(\Vv)$,}
\end{align}
where $w\in [0,1]$ is the selection parameter and $\pi_{\lambda} \colon \Vv \to \R$ represents the {\em payoff} function. We will consider both the fitness and the payoff function as functions of the pair $(\sigma, \lambda) \in \Vv \times \Pp(\Vv)$ and denote $F(\sigma, \lambda) = F_{\lambda} (\sigma)$ and $\pi(\sigma, \lambda) = \pi_{\lambda} (\sigma)$. For $\lambda \in \Pp(\Vv)$ we further set
\begin{align}
    \label{e:averagepi}
    \overline{\pi}_{\lambda } := \int_{\Vv} \pi_{\lambda} (\sigma) \, \di \lambda(\sigma) \qquad \text{and} \qquad \overline{F}_{\lambda} := \int_{\Vv} F_{\lambda} (\sigma) \, \di \lambda (\sigma) \,.
\end{align}

The only assumption we need in view of our scaling limits is the following:
\begin{itemize}
    \item [$(\pi.1)$] The map $\pi \colon \Vv \times \Pp(\Vv) \to \R$ is continuous.
\end{itemize}
Notice that this implies, together with the compactness of $\Vv \times \Pp(\Vv)$, that there exists a constant $C_{\pi}>0$ such that
\begin{align}
    \label{e:pi-bdd}
    | \pi_{\lambda} (\sigma)| \leq C_{\pi} \qquad \text{for every $(\sigma, \lambda) \in \Vv \times \Pp(\Vv)$.}
\end{align}

\begin{remark}[On the strict positivity of the {\em fitness} function]
\label{positiveFitness}
For the discrete Moran process to be mathematically well-defined and biologically meaningful, it is crucial that the fitness function satisfies $F_\lambda(\sigma) > 0$ for any strategy $\sigma \in \mathcal{V}$ and any population distribution $\lambda \in \mathcal{P}(\mathcal{V})$. Indeed, in the Moran process, the probability of an individual being selected for reproduction is proportional to its relative fitness, namely $\frac{\frac{1}{N}F_\lambda(\sigma)}{\overline{F}_\lambda}$ (see below). If the fitness were allowed to vanish, there could exist specific population states $\lambda$ where all agents simultaneously achieve zero fitness. In such a scenario, the average population fitness $\overline{F}_\lambda$ would drop to zero, leading to an undefined $\frac{0}{0}$ transition probability. Bounding the fitness strictly away from zero guarantees that the denominator is always strictly positive, ensuring that the transition matrix of the Markov chain is well-defined at every discrete time step. It follows from \eqref{e:Flambda} and \eqref{e:pi-bdd} that, if $w\in \left [0, \frac{1}{C_{\pi}+1}\right)$, then $F_\lambda(\sigma)> 0$ for every $\sigma \in\Vv$ and $\lambda \in \Pp(\Vv)$. However, since we are interested in a vanishing selection parameter $w$, this assumption on $w$ is naturally satisfied in our main results (Theorems \ref{t:compactness-1} and \ref{t:compactness-3}).
\end{remark}

\begin{remark}[The non-vanishing selection case]
\label{strong-sel}
Notice that in the case of non-vanishing selection parameter (Theorem \ref{t:compactness-3-strong}), i.e. $w\in (0,1]$, then the positivity of the {\em fitness} function is strictly related to the non-negativity of the {\em payoff} function. More precisely, rewriting the fitness as $F_\lambda(\sigma) = (1 - w) + w\pi_\lambda(\sigma)$, it follows that, if
\begin{equation}
\label{strong-sel-ass}
\text{either } w\in (0,1) \text{ and } \pi \geq 0 \qquad \text{or } w=1 \text{ and } \pi>0
\end{equation}
then the fitness $F$ is strictly positive in $\Vv \times \Pp(\Vv)$.
\end{remark}

\medskip

\paragraph{\bf Time-discrete process.}
Assume that either $w \in \left[0, \frac{1}{C_\pi + 1}\right)$ or \eqref{strong-sel-ass} holds.
We consider the following time-discrete process. For $m \in  \{ 1, \ldots, k\}$ and $n \in \{1, \ldots, N\}$ we let $\sigma^{N}_{n} (t^{k}_{m-1}) \colon \Om \to \Vv$ be the strategy of agent $n$ at time $t^{k}_{m-1}$. We let $\bm{\sigma}^{N}(t^{k}_{m-1}) = (\sigma^{N}_1(t^{k}_{m-1}),\dots,\sigma_N^N(t^{k}_{m-1}))$ be the state of the system at time $t^{k}_{m-1}$ and we consider the corresponding empirical distribution of strategies $\lambda^{N}_{t^{k}_{m-1}} \colon \Omega \to \Pp(\Vv)$ defined by $\lambda^{N}_{t^{k}_{m-1}} := \frac{1}{N} \sum_{n=1}^{N} \delta_{\sigma^{N}_{n} (t^{k}_{m-1})}$, and $\Lambda^{N}_{t^{k}_{m-1}} = (\lambda^{N}_{t^{k}_{m-1}})_{\#} \mathbb{P}$. The distribution of strategies at time step $t^{k}_{m-1}$ is conditional on the random state $\bm{\sigma}^{N}(t^{k}_{m-1})$ as described in the following. The probability that the agent $n \in \{ 1, \ldots, N\}$ is chosen to abandon their strategy $\sigma^{N}_{n} (t^{k}_{m-1})$ is $\frac{1}{N}$. The probability that the agent $n' \in \{1, \dots, N\}$ is chosen to replicate their strategy $\sigma^{N}_{n} (t^{k}_{m-1})$ is instead expressed in terms of the relative fitness given by
\begin{align*}
\frac{\frac{1}{N} F_{ \lambda^{N}_{ t^{k}_{m - 1 } } } ( \sigma^{N}_{n'} ( t^{k}_{ m - 1 } ) ) }{ \frac{1}{N} \displaystyle \sum_{\ell =1}^{N} F_{ \lambda^{N}_{ t^{k}_{ m - 1 } }  }  (\sigma^{N}_{\ell} (t^{k}_{m-1}) ) } = \frac{\frac{1}{N} F_{ \lambda^{N}_{ t^{k}_{m - 1 } } } ( \sigma^{N}_{n'} ( t^{k}_{ m - 1 } ) ) }{  \overline{F}_{ \lambda^{N}_{ t^{k}_{ m - 1 } }  }} \,.
\end{align*}
Thanks to Remarks \ref{positiveFitness} and \ref{strong-sel}, this transition is well-defined.
Therefore, at time $t^{k}_{m}$ the strategies $\bm{\sigma}^{N} (t^{k}_{m-1})$ are updated as follows: 
\begin{equation*}
\begin{split}
\sigma^{N}_{\ell} (t^{k}_{m}) & = \sigma^{N}_{\ell} (t^{k}_{m-1} ) \qquad \text{for $\ell \neq n$,}
\\
\sigma^{N}_{n} (t^{k}_{m}) & = \sigma^{N}_{n'} (t^{k}_{m-1} )\,.
\end{split}
\end{equation*}
Such an update occurs with conditional probability
\begin{align}
\label{def-tran-prob}
    \mathbb{P} \Big(\sigma^{N}_{n} (t^{k}_{m}) = \sigma^{N}_{n'} (t^{k}_{m-1}) \ \Big| \ \bm{\sigma}^{N}(t^{k}_{m-1}) \Big) = \frac{\frac{1}{N^{2}} F_{ \lambda^{N}_{ t^{k}_{m - 1 } } } ( \sigma^{N}_{n'} ( t^{k}_{ m - 1 } ) ) }{  \overline{F}_{ \lambda^{N}_{ t^{k}_{ m - 1 } }  }}\,.
\end{align}
Then, we set $\lambda^{N}_{t^{k}_{m}} = \frac{1}{N} \sum_{n = 1}^{N} \delta_{ \sigma^{N}_{n} ( t^{k}_{m} ) }$ and $\Lambda^{N}_{t^{k}_{m}} = (\lambda^{N}_{t^{k}_{m}})_{\#} \mathbb{P} \in \Pp (\Pp( \Vv))$. 

For $t \in [t^{k}_{m-1}, t^{k}_{m}]$ we define the affine interpolation 
\begin{align}
\label{e:def-interpolata-2}
\lambda^{N}_{t} & = \lambda^{N}_{t^{k}_{m-1}} + \frac{t - t^{k}_{m-1}}{\tau_{k}} \big( \lambda^{N}_{t^{k}_{m}} - \lambda^{N}_{t^{k}_{m-1}} \big)  \in \Pp(\Vv)
\\
\label{e:def-interpolata}\Lambda^{N}_{t} & = ( \lambda^{N}_{t})_{\#}\mathbb{P} \in \Pp (\Pp (\Vv))\,.
\end{align}
We further define
\begin{align*}
\overline{\lambda}^{N}_{t} := \lambda^{N}_{t^{k}_{m-1}} \,,\qquad \overline{\Lambda}^{N}_{t} := (\overline{\lambda}^{N}_{t})_{\#} \mathbb{P} = \Lambda^{N}_{t^{k}_{m-1}}
\end{align*}
for $t \in [t^{k}_{m-1}, t^{k}_{m})$.

\begin{lemma}
    \label{l:distance}
    For every $t \in [0, T]$ and $N \in \mathbb{N}\setminus\{0\}$ it holds
    \begin{align*}
        W_{1} (\Lambda^{N}_{t} , \overline{\Lambda}^{N}_{t}) \leq \frac{2}{N}\,.
    \end{align*}
\end{lemma}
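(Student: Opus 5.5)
The plan is to use the natural coupling given by the common probability space $(\Omega,\mathbb{P},\mathcal{F})$. Both measures are push-forwards of $\mathbb{P}$: $\Lambda^N_t=(\lambda^N_t)_\#\mathbb{P}$ and $\overline\Lambda^N_t=(\overline\lambda^N_t)_\#\mathbb{P}$. Hence, for every $\varphi\in\Lip(\Pp(\Vv))$ with $\Lip(\varphi)\le 1$ (Lipschitz constant taken with respect to the BL-norm),
\begin{align*}
\int_{\Pp(\Vv)}\varphi\,\di(\Lambda^N_t-\overline\Lambda^N_t)=\mathbb{E}\big(\varphi(\lambda^N_t)-\varphi(\overline\lambda^N_t)\big)\le \mathbb{E}\big(\|\lambda^N_t-\overline\lambda^N_t\|_{\rm BL}\big).
\end{align*}
Taking the supremum over such $\varphi$, it suffices to prove the pointwise bound $\|\lambda^N_t(\omega)-\overline\lambda^N_t(\omega)\|_{\rm BL}\le 2/N$ for every $\omega\in\Omega$.

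Fix $t\in[t^k_{m-1},t^k_m)$. By \eqref{e:def-interpolata-2},
\begin{align*}
\lambda^N_t-\overline\lambda^N_t=\frac{t-t^k_{m-1}}{\tau_k}\big(\lambda^N_{t^k_m}-\lambda^N_{t^k_{m-1}}\big),
\end{align*}
and the prefactor lies in $[0,1]$. In a single step of the Moran process at most one agent changes strategy: agent $n$ moves from $\sigma^N_n(t^k_{m-1})$ to $\sigma^N_{n'}(t^k_{m-1})$. Therefore
\begin{align*}
\lambda^N_{t^k_m}-\lambda^N_{t^k_{m-1}}=\frac1N\big(\delta_{\sigma^N_{n'}(t^k_{m-1})}-\delta_{\sigma^N_n(t^k_{m-1})}\big).
\end{align*}
For any $\phi$ with $\|\phi\|_{\rm Lip}\le1$ we have $|\phi(x)-\phi(y)|\le 2\|\phi\|_\infty\le 2$, so $\|\delta_x-\delta_y\|_{\rm BL}\le 2$. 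This gives the pointwise bound $2/N$, and hence the claim.

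The endpoint $t=T$ (or $t=t^k_m$ under the half-open convention) needs a separate check. There $\overline\lambda^N_t$ is either defined as $\lambda^N_{t^k_m}$, giving distance $0$, or as the previous node, giving distance at most $2/N$ by the same computation. Beyond this there is no genuine obstacle. The only points requiring care are measurability of $\lambda^N_t$ and $\overline\lambda^N_t$, which holds because they are finite combinations of Dirac masses at measurable maps, and using the BL-norm consistently as the metric on $\Pp(\Vv)$ in the definition of $W_1$.
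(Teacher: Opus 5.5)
Your proof is correct and follows the paper's argument. Both bound $W_1(\Lambda^N_t,\overline\Lambda^N_t)$ by $\frac{t-t^k_{m-1}}{\tau_k}\,\mathbb{E}\big(\|\lambda^N_{t^k_m}-\lambda^N_{t^k_{m-1}}\|_{\rm BL}\big)$ using the common probability space, and then use that a single Moran step moves at most one agent, so the increment is $\frac1N(\delta_{\sigma_{n'}}-\delta_{\sigma_n})$ with BL-norm at most $2/N$. The only difference is cosmetic: the paper obtains this last bound by conditioning on $\bm{\sigma}^N(t^k_{m-1})$ and summing over the transition probabilities, whereas you argue pathwise, which gives the same almost-sure bound.
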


\begin{proof}
    The thesis is clearly true for $t = t^{k}_{m-1}$ for some $m \in \{0, \ldots, k\}$. Let us assume that $t \in (t^{k}_{m-1}, t^{k}_{m})$. Then,
    \begin{align*}
        W_{1}(\Lambda^{N}_{t}, \overline{\Lambda}^{N}_{t}) &  = \sup \bigg\{ \int_{\Pp(\Vv)} \psi(\lambda) \, \di (\Lambda^{N}_{t} - \overline{\Lambda}^{N}_{t}) (\lambda) : \, \psi \in {\rm Lip}(\Pp(\Vv)) \text{ with } {\rm Lip}(\psi)\leq 1\bigg\}
        \\
        &
        \leq \frac{t - t^{k}_{m-1}}{\tau_{k}} \, \mathbb{E} \left( \big\| \lambda^{N}_{t^{k}_{m}} - \lambda^{N}_{t^{k}_{m-1}} \big\|_{\rm BL} \right) \,.
    \end{align*}
        By construction, for $\bm{\sigma}^{N}(t^{k}_{m-1})_\#\mathbb{P}$-a.e.\ $\bm{\sigma}$, conditional on $\bm{\sigma}^{N}(t^{k}_{m-1}) = \bm{\sigma}$, we have that 
    \begin{align*}
        \big\| \lambda^{N}_{t^{k}_{m}} - \lambda^{N}_{t^{k}_{m-1}} \big\|_{\rm BL} = \frac{1}{N} \big\| \delta_{\sigma_{n'}} - \delta_{\sigma_{n}} \big\|_{\rm BL} \leq \frac{2}{N} \, ,
    \end{align*}
    with probability $\frac{1}{N^{2}} F_{\lambda } ( \sigma_{n'} ) / \overline{F}_{ \lambda }$ (where $\lambda = \frac{1}{N} \sum_{n=1}^N \delta_{\sigma_n})$.
    Therefore, summing over $n$ and $n'$, we deduce that 
    \begin{align*}
        \big\| \lambda^{N}_{t^{k}_{m}} - \lambda^{N}_{t^{k}_{m-1}} \big\|_{\rm BL} \leq \frac{2}{N} \, ,
    \end{align*}
    with probability 1, whence 
    \begin{align*}
        \mathbb{E}\Big( \big\| \lambda^{N}_{t^{k}_{m}} - \lambda^{N}_{t^{k}_{m-1}} \big\|_{\rm BL} \ \Big| \ \bm{\sigma}^{N}(t^{k}_{m-1}) \Big) \leq 1 \quad \mathbb{P}-\text{a.s.},
    \end{align*}
    which implies
    \begin{align*}
        \mathbb{E} \Big( \big\| \lambda^{N}_{t^{k}_{m}} - \lambda^{N}_{t^{k}_{m-1}} \big\|_{\rm BL} \Big) = \mathbb{E}\Big( \mathbb{E}\Big( \big\| \lambda^{N}_{t^{k}_{m}} - \lambda^{N}_{t^{k}_{m-1}} \big\|_{\rm BL} \ \Big| \ \bm{\sigma}^{N}(t^{k}_{m-1}) \Big) \Big) \leq \frac{2}{N} \,.
    \end{align*}
\end{proof}

\subsection{Notions of solutions and main results}
\label{sub:convergence-thm}

In this subsection we give the notion of solutions to the replicator equation, the replicator equation with genetic drift and the pure genetic drift diffusion equation. We then state the convergence results of the Moran process to the three equations, depending on the scalings of the parameters $N_{k}$, $\tau_{k}$, and $w_{k}$. \\
In what follows, $b:\Pp(\Vv) \to E_{\Pp(\Vv)}$.

\begin{definition}[Weak solution to the  replicator equation]
\label{d:replicator-eq}
We say that a curve $\Lambda\colon [0, T] \to (\Pp(\Pp(\Vv)); W_{1})$ is a weak solution to the replicator equation (with parameter $\gamma>0$) with initial condition $\Lambda(0) = \Lambda_{0} \in \Pp(\Pp(\Vv))$ if for every $\psi \in C^{1}([0, T]\times \Pp(\Vv))$ it holds
\begin{align}
\label{e:weak-replicator}
    \int_{\Pp(\Vv)} \psi (T, \lambda) \, \di \Lambda_{T} (\lambda) - \int_{\Pp(\Vv)} \psi(0, \lambda) \, \di \Lambda_{0} (\lambda) = & \int_{0}^{T} \int_{\Pp(\Vv)} \partial_{t} \psi(t, \lambda) \, \di \Lambda_{t} (\lambda) \, \di t 
    \\
    &
    \nonumber + \gamma\int_{0}^{T} \int_{\Pp(\Vv)} D_{\lambda} \psi(t, \lambda) [b(\lambda)]\, \di \Lambda_{t} (\lambda) \, \di t\,.
\end{align}
\end{definition}

\begin{definition}[Very weak solution to the replicator equation with genetic drift diffusion]
\label{d:weak-replicator-mutator}
We say that a curve $\Lambda\colon [0, T] \to (\Pp(\Pp(\Vv)); W_{1})$ is a very weak solution to the replicator equation with genetic drift diffusion (with parameters $\gamma, \nu >0$) with initial condition $\Lambda(0) = \Lambda_{0} \in \Pp(\Pp(\Vv))$ if for every $\psi \in C^{2}([0, T]\times \Pp(\Vv))$ it holds
\begin{align}
\label{e:fleming-viot}
    \int_{\Pp(\Vv)}  \psi (T, \lambda) \, \di \Lambda_{T} (\lambda) & - \int_{\Pp(\Vv)} \psi(0, \lambda) \, \di \Lambda_{0} (\lambda) = \int_{\Pp(\Vv)} \partial_{t} \psi(t, \lambda) \, \di \Lambda_{t} (\lambda) \, \di t 
    \\
    &
    + \nonumber\gamma\int_{0}^{T} \int_{\Pp(\Vv)} D_{\lambda} \psi(t, \lambda) [b(\lambda)]\, \di \Lambda_{t} (\lambda) \, \di t
    \\
    &
   + \frac{\nu}{2} \int_{0}^{T} \int_{\Pp(\Vv)} \int_{\Vv} \int_{\Vv} D^{2}_{\lambda} \psi(t, \lambda)[\delta_{\sigma} - \delta_{\overline{\sigma}} \ , \ \delta_{\sigma} - \delta_{\overline{\sigma}}] \, \di \lambda(\sigma) \, \di \lambda(\overline{\sigma}) \, \di \Lambda_{t} (\lambda) \, \di t \,. \nonumber
\end{align}
\end{definition}

\begin{definition}[Very weak solution to the pure genetic drift diffusion equation]
\label{d:weak-pure-genetic-drift}
We say that a curve $\Lambda\colon [0, T] \to (\Pp(\Pp(\Vv)); W_{1})$ is a very weak solution to the pure genetic drift diffusion equation (with parameter $\nu >0$) with initial condition $\Lambda(0) = \Lambda_{0} \in \Pp(\Pp(\Vv))$ if for every $\psi \in C^{2}([0, T]\times \Pp(\Vv))$ it holds
\begin{align}
\label{e:fleming-viot-puro}
    \int_{\Pp(\Vv)}  \psi (T, \lambda) \, \di \Lambda_{T} (\lambda) & - \int_{\Pp(\Vv)} \psi(0, \lambda) \, \di \Lambda_{0} (\lambda) = \int_{\Pp(\Vv)} \partial_{t} \psi(t, \lambda) \, \di \Lambda_{t} (\lambda) \, \di t 
    \\
    &
   + \frac{\nu}{2} \int_{0}^{T} \int_{\Pp(\Vv)} \int_{\Vv} \int_{\Vv} D^{2}_{\lambda} \psi(t, \lambda)[\delta_{\sigma} - \delta_{\overline{\sigma}} \ , \ \delta_{\sigma} - \delta_{\overline{\sigma}}] \, \di \lambda(\sigma) \, \di \lambda(\overline{\sigma}) \, \di \Lambda_{t} (\lambda) \, \di t \,. \nonumber
\end{align}
\end{definition}

\begin{remark}
\label{r:kimura-fleming-viot}
     In Section~\ref{S:examples} we will consider the case $\Vv = \Pp(U)$ for a finite set $U = \{u_{1}, \ldots, u_{M}\}$ of pure strategies, where we identify $\Pp(\Vv)$ with the simplex $\Delta^{M-1}$ of $\R^{M}$. In such a framework, we will see that the right-hand side of~\eqref{e:fleming-viot} involves a Fleming--Viot operator (cf.~\eqref{e:blambda} and~\cite{Fle-Vio}) and the strong version of~\eqref{e:fleming-viot} writes as in~\eqref{e:strong-equation} using the tangential divergence in the simplex~$\Delta^{M-1}$. For $M=2$, equation~\eqref{e:strong-equation} reduces to the Kimura equation~\cite{Kimura1, Kimura2} (see \eqref{e:kimura-example}), and Definition~\ref{d:weak-replicator-mutator} can be compared with the corresponding notion given in~\cite[Definition~3.1]{Casteras-Monsaingeon}. As also discussed in~\cite{Casteras-Monsaingeon}, the Definition~\ref{d:weak-replicator-mutator} of very weak solution for the replicator equation with genetic drift does not carry any extra boundary conditions. In contrast to~\cite{Carrilloetal-2022}, where a no-flux condition is imposed, this can lead to extinction and fixation effects, i.e., concentration effects on the boundary of~$\Delta^{M-1}$ in finite time.
\end{remark}

We are now in a position to state the convergence results concerning the replicator equation with or without genetic drift and the pure genetic drift diffusion equation. The proofs of the next two theorems are postponed to Sections~\ref{sub:thm1} and~\ref{sub:thm2}, respectively. We recall that $\tau_k = \frac{T}{k}$, so $\tau_k \to 0$ as $k\to +\infty$. \\

\begin{theorem}
    \label{t:compactness-1}
Let $N_k\to \infty$ and $w_k\to 0$ as $k\to \infty$. Let $\Lambda^{N_{k}} \in C([0, T]; \Pp(\Pp(\Vv)))$ be the curves defined in~\eqref{e:def-interpolata-2}--\eqref{e:def-interpolata}. Assume that $(\pi.1)$ holds and that 
   \begin{align}
       \label{e:scalings} & \lim_{k \to \infty} \frac{w_{k}}{\tau_{k} N_{k}} = \gamma \geq 0 \,, \qquad \lim_{k \to \infty} \frac{1}{\tau_{k} N^{2}_{k}} = \nu >0 \,.
   \end{align}
   Let $b(\lambda)=(\pi_\lambda-\overline{\pi}_\lambda)\lambda$. Then, there exists $\Lambda\colon [0, T] \to \Pp(\Pp(\Vv))$ such that, up to a not relabeled subsequence, $W_{1} (\Lambda^{N_{k}}_{t} , \Lambda_{t}) \to 0$ as $k \to \infty$ for every $t \in [0, T]$, and
   \begin{itemize}
       \item if $\gamma>0$, $\Lambda$ is a very weak solution to the replicator equation with genetic drift in the sense of Definition \ref{d:weak-replicator-mutator},
       \item if $\gamma=0$, $\Lambda$ is a very weak solution to the pure genetic drift diffusion equation in the sense of Definition \ref{d:weak-pure-genetic-drift}.
   \end{itemize}
    In particular, $t \mapsto \Lambda_{t}$ is Lipschitz continuous in the $\di_{\mathcal{Z}}$-metric and continuous in the $W_{1}$-metric of $\Pp(\Pp(\Vv))$. 
\end{theorem}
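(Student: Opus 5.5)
The proof has three parts: derive an approximate equation for the discrete curves, get Lipschitz bounds in $\di_{\mathcal{Z}}$ for compactness, and pass to the limit.

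\textbf{Step 1: one-step expansion.} Fix $\psi \in C^2([0,T]\times\Pp(\Vv))$ and a step $[t^k_{m-1},t^k_m]$. Conditional on $\bm\sigma^N(t^k_{m-1})=\bm\sigma$, with empirical measure $\lambda$, the increment is $\lambda^N_{t^k_m}-\lambda = \frac1N(\delta_{\sigma_{n'}}-\delta_{\sigma_n})$. This occurs with probability $\frac{1}{N^2}F_\lambda(\sigma_{n'})/\overline F_\lambda$. I Taylor expand $\psi(t,\cdot)$ to second order along this segment of $\Pp(\Vv)$, using the $C$-differential calculus. Summing over $n,n'$ gives the following.
\begin{itemize}
\item First order term: $\frac1N D_\lambda\psi[\int (F_\lambda/\overline F_\lambda)\,\di\lambda\,\delta_\cdot - \lambda]$. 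This equals $\frac1N D_\lambda\psi[(F_\lambda-\overline F_\lambda)\lambda/\overline F_\lambda] = \frac{w}{N\overline F_\lambda} D_\lambda\psi[b(\lambda)]$, since $F_\lambda-\overline F_\lambda = w(\pi_\lambda-\overline\pi_\lambda)$.
\item Second order term: $\frac{1}{2N^2}\iint D^2_\lambda\psi[\delta_\sigma-\delta_{\bar\sigma},\delta_\sigma-\delta_{\bar\sigma}]\,\frac{F_\lambda(\sigma)}{\overline F_\lambda}\,\di\lambda(\sigma)\,\di\lambda(\bar\sigma)$.
\item Remainder: $o(N^{-2})$, uniformly in $\lambda$ by uniform continuity of $D^2_\lambda\psi$ on the compact set $[0,T]\times\Pp(\Vv)$. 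Here $\|\delta_\sigma-\delta_{\bar\sigma}\|_{\rm BL}\le 2$.
\end{itemize}
Since $w_k\to0$ and $\pi$ is bounded, $\overline F_\lambda = 1+O(w)$ and $F_\lambda/\overline F_\lambda = 1+O(w)$, uniformly in $\lambda$. Dividing by $\tau_k$ and using the scalings \eqref{e:scalings}, the coefficients become $\frac{w_k}{\tau_kN_k}\to\gamma$ and $\frac{1}{\tau_kN_k^2}\to\nu$. The errors are $O(\frac{w_k^2}{\tau_kN_k}) + O(\frac{w_k}{\tau_kN_k^2}) + o(\frac{1}{\tau_kN_k^2})$, and all of these tend to $0$.

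Taking expectations and telescoping in $m$ yields a discrete weak formulation for $\overline\Lambda^{N_k}$. The time derivative contributes $\partial_t\psi$ integrated against $\Lambda^{N_k}_t$, up to errors bounded by $\|D\psi\|_\infty\, W_1(\Lambda^N_t,\overline\Lambda^N_t)\le 2/N$ via Lemma~\ref{l:distance}. This is the content of the approximate equation \eqref{intro:approx-eq-nostra}.

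\textbf{Step 2: compactness.} Apply the same expansion to a time-independent $\psi\in\mathcal Z$. By the bounds $\|D\psi\|_\infty\le1$ and $\|D^2\psi\|_\infty\le1$, the per-step increment of $\int\psi\,\di\overline\Lambda^{N}$ is at most $C(\frac{w}{N}+\frac{1}{N^2})$, and the remainder is bounded by the same quantity. This uniformity comes from the Lipschitz bound on $D\psi$, since then the Taylor remainder is $\le \frac12\|h\|^2$. It gives
\[
\di_{\mathcal Z}(\overline\Lambda^{N_k}_{t^k_m},\overline\Lambda^{N_k}_{t^k_{m'}})\le C\,|m-m'|\,\tau_k\Big(\tfrac{w_k}{\tau_kN_k}+\tfrac{1}{\tau_kN_k^2}\Big)\le C'|t^k_m-t^k_{m'}|.
\]
Combining with Lemma~\ref{l:distance} and \eqref{e:111}, we get $\di_{\mathcal Z}(\Lambda^{N_k}_s,\Lambda^{N_k}_t)\le C'|t-s|+4/N_k$. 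This is only an asymptotic equicontinuity, but it suffices. By Lemma~\ref{l:dZ}, $(\Pp(\Pp(\Vv)),\di_{\mathcal Z})$ is compact, so a refined Arzel\`a--Ascoli argument applies: take a diagonal subsequence converging on rational times and extend using the uniform modulus up to $4/N_k\to0$. This produces a limit $\Lambda$ that is $C'$-Lipschitz in $\di_{\mathcal Z}$, with $\di_{\mathcal Z}(\Lambda^{N_k}_t,\Lambda_t)\to0$ for every $t$.

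\textbf{Step 3: topology upgrade and passage to the limit.} This is the main obstacle. I expect $\di_{\mathcal Z}$ and $W_1$ to induce the same topology on the compact set $\Pp(\Pp(\Vv))$. Indeed, $\di_{\mathcal Z}$-convergence gives convergence of $\int\psi$ for $\psi\in C^2$, and hence for all $\psi\in C(\Pp(\Vv))$ by Lemma~\ref{l:density} and the uniform mass bound. That is weak convergence, which is equivalent to $W_1$-convergence on a compact space. The identity map from $(\Pp(\Pp(\Vv)),W_1)$ to $(\Pp(\Pp(\Vv)),\di_{\mathcal Z})$ is then a continuous bijection of compact spaces, hence a homeomorphism. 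It follows that $W_1(\Lambda^{N_k}_t,\Lambda_t)\to0$ for every $t$, and that $t\mapsto\Lambda_t$ is $W_1$-continuous.

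Finally, I pass to the limit in the discrete weak formulation. The integrands $D_\lambda\psi(t,\lambda)[b(\lambda)]$ and $\iint D^2_\lambda\psi[\cdots]\,\di\lambda\,\di\lambda$ must be continuous in $\lambda$. For the first, continuity of $\pi$ and $\lambda\mapsto\pi_\lambda\lambda$ into $E_{\Pp(\Vv)}$ needs uniform continuity of $\pi$ on the compact set $\Vv\times\Pp(\Vv)$. For the second, $(\sigma,\bar\sigma)\mapsto D^2\psi[\delta_\sigma-\delta_{\bar\sigma},\cdot]$ is continuous, so its integral against $\lambda\otimes\lambda$ is continuous in $\lambda$. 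With this continuity, dominated convergence in $t$ gives \eqref{e:fleming-viot} when $\gamma>0$, and \eqref{e:fleming-viot-puro} when $\gamma=0$. The initial datum converges by Lemma~\ref{l:selection}.
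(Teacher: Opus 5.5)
Your proposal is correct and follows the paper's architecture. The one-step expansion reproduces Theorem~\ref{t:almost-equation} and Corollary~\ref{c:almost-equation}. Compactness comes from $\di_{\mathcal Z}$-Lipschitz bounds, extraction on a countable dense set of times, and identification of $W_1$-limits through $\di_{\mathcal Z}$, essentially as in the paper. The paper differentiates along the affine interpolant instead of expanding across the jump, so $\partial_t\psi$ sits directly against $\Lambda^{N}_t$ and the factor $\frac12$ comes from $\int_{t^k_{m-1}}^{t^k_m}(t-t^k_{m-1})\tau_k^{-2}\,\di t$. In your bookkeeping the time-derivative error is controlled by the $\lambda$-modulus of continuity of $\partial_t\psi$, not by $\|D\psi\|_\infty$, but it still vanishes. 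Your bound $\frac12{\rm Lip}(D\psi)\|h\|_{\rm BL}^2\le 2/N^2$ is a small refinement: it gives a $\di_{\mathcal Z}$-estimate uniform over $\mathcal Z$ already at the discrete level. The paper's \eqref{e:something} instead carries a term $C\,\mathfrak{w}_{D^{2}\psi}(1/N_k)$, which is not uniform in $\psi\in\mathcal Z$ and is removed only after letting $k\to\infty$ for each fixed $\psi$.
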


\begin{remark}
Let us notice that Theorem \ref{t:compactness-1} with $\gamma=0$ also includes the case of {\em neutral selection}, in which $w=w_k=0$ for every $k\in \mathbb{N}$.
\end{remark}

\begin{theorem}
    \label{t:compactness-3}
Let $N_k\to \infty$ and $w_k\to 0$ as $k\to \infty$. Let $\Lambda^{N_{k}} \in C([0, T]; \Pp(\Pp(\Vv))$ be the curves defined in~\eqref{e:def-interpolata-2}--\eqref{e:def-interpolata}. Assume that $(\pi.1)$ holds and that 
   \begin{align}
       \label{e:scalings-2} & \lim_{k \to \infty} \frac{w_{k}}{\tau_{k} N_{k}} = \gamma >0 \,, \qquad \lim_{k \to \infty} \frac{1}{\tau_{k} N^{2}_{k}} = 0 \,.
   \end{align}
Let $b(\lambda)=(\pi_\lambda-\overline{\pi}_\lambda)\lambda$. Then, there exists a weak solution to the replicator equation $\Lambda\colon [0, T] \to (\Pp(\Pp(\Vv)), W_{1})$ such that, up to a (not relabeled) subsequence, $W_{1} (\Lambda^{N_{k}}_{t} , \Lambda_{t}) \to 0$ as $k \to \infty$ for every $t \in [0, T]$. Moreover,~$\Lambda$ is Lipschitz continuous with respect to the $W_{1}$-metric of $\Pp (\Pp(\Vv))$.
\end{theorem}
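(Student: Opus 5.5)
The argument runs in three steps: derive an approximate weak equation for the interpolated curves, obtain compactness in $W_1$, and pass to the limit. The key difference from Theorem~\ref{t:compactness-1} is that here $\nu=0$, which yields uniform Lipschitz bounds in $W_1$ itself and not merely in $\di_{\mathcal{Z}}$.

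\textbf{Step 1: one-step expansion.} Fix a test function $\psi\in C^1([0,T]\times\Pp(\Vv))$, first with $D_\lambda\psi$ uniformly continuous; by compactness of $\Pp(\Vv)$ this is automatic. Condition on $\bm{\sigma}^N(t^k_{m-1})$ and write $\lambda=\lambda^{N}_{t^k_{m-1}}$. One step replaces $\lambda$ by $\lambda+\frac1N(\delta_{\sigma_{n'}}-\delta_{\sigma_n})$ with the probability in \eqref{def-tran-prob}. A first-order Taylor expansion in the $C$-differentiable sense gives
\begin{align*}
\mathbb{E}\big[\psi(\lambda^N_{t^k_m})-\psi(\lambda)\,\big|\,\bm\sigma\big]=\frac1N D_\lambda\psi(\lambda)\Big[\int_\Vv \tfrac{F_\lambda}{\overline F_\lambda}\,\di\lambda\,\delta_\cdot-\lambda\Big]+o(1/N),
\end{align*}
with the remainder uniform because it is controlled by the modulus of continuity of $D_\lambda\psi$ on increments of size $2/N$. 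The drift term equals $\frac1N D_\lambda\psi(\lambda)\big[\frac{F_\lambda\lambda}{\overline F_\lambda}-\lambda\big]=\frac{w}{N\overline F_\lambda}D_\lambda\psi(\lambda)[b(\lambda)]$, since $F_\lambda-\overline F_\lambda=w(\pi_\lambda-\overline\pi_\lambda)$. Moreover $\overline F_\lambda\to1$ uniformly as $w_k\to0$.

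\textbf{Step 2: the approximate equation.} Sum over $m$ and include the time variation of $\psi$ via the affine interpolant, as in Corollary~\ref{c:almost-equation}. This gives
\begin{align*}
\int\psi(T)\,\di\Lambda^{N_k}_T-\int\psi(0)\,\di\Lambda^{N_k}_0=\int_0^T\!\!\int\partial_t\psi\,\di\Lambda^{N_k}_t\,\di t+\frac{w_k}{\tau_kN_k}\int_0^T\!\!\int \frac{D_\lambda\psi[b]}{\overline F_\lambda}\,\di\overline\Lambda^{N_k}_t\,\di t+R_k .
\end{align*}
The error satisfies $R_k\le T\,\omega_\psi(2/N_k)\cdot\frac{1}{\tau_kN_k}$. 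This is the main obstacle: $\frac1{\tau_kN_k}$ need not be bounded, so a mere $o(1/N)$ remainder is not enough. I would instead use $C^2$ test functions first. Then the remainder is $O(\|D^2\psi\|/N^2)$ per step, so $R_k=O\big(\frac{1}{\tau_kN_k^2}\big)\to0$ by \eqref{e:scalings-2}. Afterwards I would extend to $C^1$ test functions by density, approximating $\psi\in C^1$ by $C^2$ functions in $C^1$-norm. If this density in the Banach setting is delicate, I would instead state the limit identity for $C^2$ tests and extend it using the Lipschitz bound from Step 3.

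\textbf{Step 3: equi-Lipschitz bound in $W_1$.} For $\psi$ that are $1$-Lipschitz but only Lipschitz, one step changes $\mathbb{E}\psi$ by at most $\mathbb{E}\|\lambda^N_{t^k_m}-\lambda^N_{t^k_{m-1}}\|_{\rm BL}$. This crude bound of $2/N$ is too large, because $\frac{1}{\tau N}$ may diverge. The fix is to regularize. For $\psi\in\mathcal{Z}$-type test functions with $\mathrm{Lip}(\psi)\le1$ and bounded $\mathrm{Lip}(D\psi)$, Step 2 gives the increment bound $|\!\int\psi\,\di(\Lambda^{N_k}_t-\Lambda^{N_k}_s)|\le (C\gamma+o(1))|t-s|+C\,\mathrm{Lip}(D\psi)\tfrac{|t-s|}{\tau_kN_k^2}$. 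Sending $k\to\infty$ in the limit kills the second term. This gives Lipschitz continuity of the limit in $\di_{\mathcal{Z}}$, and also in $W_1$, since the $\mathrm{Lip}(D\psi)$ term vanishes. Equivalently, by Lemma~\ref{l:density}-type smoothing, $\int\psi\,\di\Lambda_t$ is $C\gamma$-Lipschitz for every $1$-Lipschitz $\psi$.

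\textbf{Step 4: compactness and limit passage.} For compactness, note that the curves are uniformly Lipschitz in $\di_{\mathcal{Z}}$ up to an error $o(1)+O(\tau_k)$ coming from Lemma~\ref{l:distance}. The space $(\Pp(\Pp(\Vv)),\di_{\mathcal Z})$ is compact by Lemma~\ref{l:dZ}, and $\di_{\mathcal Z}$ and $W_1$ induce the same topology, since both are compact metrics with $\di_{\mathcal Z}\le W_1$. A refined Ascoli–Arzel\`a argument on a countable dense set of times, together with a diagonal extraction, then gives $W_1(\Lambda^{N_k}_t,\Lambda_t)\to0$ for all $t$.

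To pass to the limit in Step 2, use $\overline\Lambda^{N_k}_t\to\Lambda_t$ (by Lemma~\ref{l:distance}) and the continuity of $\lambda\mapsto D_\lambda\psi(\lambda)[b(\lambda)]$. The latter follows from $(\pi.1)$ and from $b$ being continuous into $E_{\Pp(\Vv)}$. Finally, $\frac{w_k}{\tau_kN_k}\to\gamma$ and $1/\overline F_\lambda\to1$ uniformly, which yields \eqref{e:weak-replicator}.
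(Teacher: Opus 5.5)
Your proposal is correct and is essentially the paper's argument, with only cosmetic deviations. The shared core is: restrict first to $C^2$ tests so that the second-order remainder sums to $O(1/(\tau_kN_k^2))\to0$; use a localized increment bound (the analogue of Corollary~\ref{c:almost-equation-2}), whose $D^2_\lambda\psi$-part disappears in the limit, to get a $W_1$-Lipschitz limit after extending to Lipschitz tests; and pass to the limit with Lemma~\ref{l:distance}. The deviations are that you keep the factor $1/\overline F_\lambda$ instead of splitting off an $O(w_k^2/(\tau_kN_k))$ remainder, and that you get compactness from Ascoli--Arzel\`a in $\di_{\mathcal Z}$ plus the coincidence of the $\di_{\mathcal Z}$- and $W_1$-topologies, rather than from a diagonal extraction directly in $W_1$.

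Two steps that you share with the paper need more care than either gives them. First, the extension to all $1$-Lipschitz tests requires $C^2$ approximants whose Lipschitz constants stay close to $1$; the uniform density of Lemma~\ref{l:density} does not provide this by itself, though finite-dimensional cylindrical approximations do. Second, the upgrade from $C^2$ to the $C^1$ tests of Definition~\ref{d:replicator-eq} does not follow from any time-Lipschitz bound, so your fallback in Step~2 would not work as stated. It needs approximants $\psi_j\in C^2$ with $D_\lambda\psi_j[b(\lambda)]\to D_\lambda\psi[b(\lambda)]$.
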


Finally, we also state the result in case of non-vanishing selection parameter $w$ (as in the case of the {\em strong selection}), in which $w\in (0,1]$; the proof is postponed to Section \ref{sub:thm3-strong}.
\begin{theorem}
\label{t:compactness-3-strong}
Let $N_k\to \infty$ as $k\to \infty$. Let $\Lambda^{N_{k}} \in C([0, T]; \Pp(\Pp(\Vv))$ be the curves defined in~\eqref{e:def-interpolata-2}--\eqref{e:def-interpolata}. Assume that $(\pi.1)$ and \eqref{strong-sel-ass} hold, and that 
\begin{align}
\label{e:scaling-strong} \lim_{k \to \infty} \frac{1}{\tau_{k} N_{k}} = \gamma >0.
\end{align}
Let $\displaystyle b(\lambda)= \frac{w(\pi_\lambda-\overline{\pi}_\lambda)\lambda}{1+w(\overline{\pi}_\lambda-1)}  $. Then, there exists a weak solution to the replicator equation $\Lambda\colon [0, T] \to (\Pp(\Pp(\Vv)), W_{1})$ such that, up to a (not relabeled) subsequence, $W_{1} (\Lambda^{N_{k}}_{t} , \Lambda_{t}) \to 0$ as $k \to \infty$ for every $t \in [0, T]$. Moreover,~$\Lambda$ is Lipschitz continuous with respect to the $W_{1}$-metric of $\Pp (\Pp(\Vv))$.
\end{theorem}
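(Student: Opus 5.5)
We prove Theorem~\ref{t:compactness-3-strong} by the same scheme as Theorems~\ref{t:compactness-1} and~\ref{t:compactness-3}: an approximate equation, a compactness argument, and passage to the limit. The only new point is that $w$ is fixed, so the selection drift keeps its full nonlinear form.

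\textbf{Step 1: one-step expansion.} Fix $\psi\in C^2([0,T]\times\Pp(\Vv))$ and condition on $\bm\sigma^N(t^k_{m-1})=\bm\sigma$, with empirical measure $\lambda$. The increment is $\lambda'-\lambda=\frac1N(\delta_{\sigma_{n'}}-\delta_{\sigma_n})$ with probability $\frac{1}{N^2}F_\lambda(\sigma_{n'})/\overline F_\lambda$. Taylor expansion in the $C$-differentiable sense gives
\[
\mathbb E[\psi(\lambda')-\psi(\lambda)\mid\bm\sigma]=D_\lambda\psi(\lambda)\big[\mathbb E(\lambda'-\lambda\mid\bm\sigma)\big]+R, \qquad |R|\le \tfrac12\,{\rm Lip}(D\psi)\,\tfrac{4}{N^2}.
\]
Summing over $n,n'$, the conditional mean increment is
\[
\frac1N\Big(\frac{F_\lambda\lambda}{\overline F_\lambda}-\lambda\Big)=\frac1N\,\frac{w(\pi_\lambda-\overline\pi_\lambda)\lambda}{1+w(\overline\pi_\lambda-1)}=\frac1N\,b(\lambda).
\]
Here we use $\overline F_\lambda=1+w(\overline\pi_\lambda-1)$. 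By~\eqref{strong-sel-ass} and the compactness of $\Vv\times\Pp(\Vv)$, we have $\overline F_\lambda\ge c>0$ uniformly, so $b$ is continuous and bounded in BL norm.

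Dividing by $\tau_k$ and summing over $m$, with the piecewise affine interpolation in time, we obtain the approximate weak formulation
\[
\int\psi(T)\,\di\Lambda^N_T-\int\psi(0)\,\di\Lambda^N_0=\int_0^T\!\!\int\partial_t\psi\,\di\Lambda^N_t\,\di t+\frac{1}{\tau_kN_k}\int_0^T\!\!\int D_\lambda\psi(t,\lambda)[b(\lambda)]\,\di\overline\Lambda^N_t\,\di t+O\Big(\frac{1}{\tau_kN_k^2}\Big)+o(1).
\]
The $o(1)$ term accounts for replacing $\partial_t\psi$ evaluated at $\overline\lambda$ by its value along the interpolant, which is controlled via Lemma~\ref{l:distance}. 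Since $\frac1{\tau_kN_k}\to\gamma$, we get $\frac1{\tau_kN_k^2}\to0$. Equivalently, $b$ here plays the role of $w\,b$ in the weak-selection corollary, but without linearization in $w$.

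\textbf{Step 2: compactness.} Equicontinuity is not available in $W_1$, so we first work in $\di_{\mathcal Z}$. Apply the identity of Step 1 on $[s,t]$ with a time-independent $\psi\in\mathcal Z$, so that ${\rm Lip}(\psi)+{\rm Lip}(D\psi)\le1$ and $\|D\psi\|_\infty\le1$ by~\cite[Lemma~4.4]{ADS}. This gives
\[
\di_{\mathcal Z}(\Lambda^N_t,\Lambda^N_s)\le C|t-s|+O\Big(\frac{1}{\tau_kN_k^2}\Big)+O\Big(\frac1{N_k}\Big),
\]
where the last term accounts for the mismatch between grid points and interior times. Because $(\Pp(\Pp(\Vv)),\di_{\mathcal Z})$ is compact by Lemma~\ref{l:dZ}, a refined Arzel\`a--Ascoli argument for asymptotically equicontinuous curves yields a subsequence converging in $\di_{\mathcal Z}$ uniformly in $t$ to a Lipschitz curve $\Lambda$. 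Pointwise $W_1$ convergence then follows. Both metrics metrize narrow convergence on the compact space $\Pp(\Pp(\Vv))$: $\di_{\mathcal Z}$ is a metric generating a topology coarser than that of $W_1$, and a continuous bijection from a compact space to a Hausdorff space is a homeomorphism. Lemma~\ref{l:selection} identifies the initial datum $\Lambda_0$.

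\textbf{Step 3: limit and $W_1$-Lipschitz regularity.} We first pass to the limit in the approximate equation. The drift term is handled as follows: $\lambda\mapsto D_\lambda\psi(t,\lambda)[b(\lambda)]$ is continuous on the compact space $\Pp(\Vv)$, $\overline\Lambda^N_t$ has the same limit as $\Lambda^N_t$ by Lemma~\ref{l:distance}, and dominated convergence in $t$ applies. This gives~\eqref{e:weak-replicator} for $C^2$ test functions.

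Next we extend to $\psi\in C^1$. Since $\Lambda$ is $\di_{\mathcal Z}$-continuous and the drift is first order, we approximate $C^1$ functions by $C^2$ ones in $C^1$ norm. This is the step I expect to be delicate, since the density argument of Lemma~\ref{l:density} gives only uniform approximation. As an alternative, one can rerun Step 1 directly for $\psi\in C^1$: the remainder then becomes $\frac{1}{\tau_kN_k}\,\omega_{D\psi}(2/N_k)$, where $\omega_{D\psi}$ is the uniform modulus of continuity of $D\psi$ on the compact space $\Pp(\Vv)$. This remainder tends to $0$ because $\frac{1}{\tau_kN_k}$ stays bounded.

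Finally, the same first-order bound with $1$-Lipschitz $C^1$ test functions gives $|\int\psi\,\di(\Lambda_t-\Lambda_s)|\le\gamma\sup\|b\|_{\rm BL}|t-s|$. Together with the density of smooth functions in Lipschitz functions, in the sense of Lemma~\ref{l:density} with Lipschitz constants controlled, this yields $W_1$-Lipschitz continuity of $\Lambda$. Controlling the Lipschitz constants in that density argument is the main technical obstacle I anticipate.
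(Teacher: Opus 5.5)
Your Steps 1--3 are correct up to the last claim of the theorem:
\begin{itemize}
\item the conditional mean increment $\frac1N b(\lambda)$;
\item the uniform bound $\overline F_\lambda\ge c>0$ coming from \eqref{strong-sel-ass};
\item the $\di_{\mathcal Z}$-compactness, together with the homeomorphism argument giving pointwise $W_1$ convergence;
\item the passage to the limit.
\end{itemize}
Your direct first-order expansion for $\psi\in C^1$ is also valid. Its total remainder is of order $\frac{1}{\tau_kN_k}\,\omega_{D\psi}(2/N_k)$, which vanishes precisely because $1/(\tau_kN_k)$ stays bounded. It covers the $C^1$ test class of Definition~\ref{d:replicator-eq}, which the paper's limit in Corollary~\ref{c:almost-equation} (stated for $C^2$ tests) does not treat explicitly.

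The genuine gap is the $W_1$-Lipschitz continuity of $\Lambda$. You reduce it to approximating every Lipschitz $\psi$ on $\Pp(\Vv)$ uniformly by $C^1$ functions whose differentials are bounded by (a multiple of) ${\rm Lip}(\psi)$, and you leave this open. Lemma~\ref{l:density} cannot supply it, since Stone--Weierstrass controls only the sup norm. The paper, by following the proof of Theorem~\ref{t:compactness-3}, passes through this same extension and cites Lemma~\ref{l:density} there. So your doubt about that step is justified, but as written your proof does not establish the claim.

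The missing observation is that here no duality or density argument is needed at all. Your remark that equicontinuity is unavailable in $W_1$ is false in this regime. Pathwise, each step changes the empirical measure by $\frac1N(\delta_{\sigma_{n'}}-\delta_{\sigma_n})$, whose BL norm is at most $2/N$. Hence the piecewise affine interpolant satisfies $\|\lambda^{N_k}_t-\lambda^{N_k}_s\|_{\rm BL}\le\frac{2}{\tau_kN_k}|t-s|$ for all $s,t\in[0,T]$. Using the coupling $(\lambda^{N_k}_t,\lambda^{N_k}_s)_\#\mathbb{P}$, as in Lemma~\ref{l:distance}, you get
\[
W_1(\Lambda^{N_k}_t,\Lambda^{N_k}_s)\le \frac{2}{\tau_kN_k}\,|t-s| .
\]
Since $\frac{1}{\tau_kN_k}\to\gamma$, the curves are equi-Lipschitz in the compact space $(\Pp(\Pp(\Vv)),W_1)$. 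Arzel\`a--Ascoli then gives a subsequence converging uniformly in $W_1$. Letting $k\to\infty$ yields $W_1(\Lambda_t,\Lambda_s)\le 2\gamma|t-s|$.

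This replaces both your $\di_{\mathcal Z}$ detour and the problematic density step. The lack of $W_1$-equicontinuity that motivates $\di_{\mathcal Z}$ occurs only in the weak-selection regimes of Theorems~\ref{t:compactness-1} and~\ref{t:compactness-3}, where $1/(\tau_kN_k)\to\infty$.
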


\subsection{Remarks on the notions of solution}
\label{sub:comments}

We conclude the section with a few comments concerning the Definitions~\ref{d:replicator-eq} and~\ref{d:weak-replicator-mutator}, focusing on uniqueness and/or comparison with previous literature.

\medskip

\paragraph{\bf Structure of solutions to the replicator equation.} We start with the structure of solutions to the replicator equation~\eqref{e:weak-replicator}. We remark that a weak solution $\Lambda \colon [0, T] \to   \Pp (\Pp(\Vv))$ to the replicator equation solves the following continuity equation in a distributional sense
\begin{align}
\label{e:cont-eq}
\partial_{t} \Lambda + \gamma \, {\rm div}_{\lambda} (b (\lambda) \Lambda) = 0 \,.
\end{align}
Under the sole continuity assumption~$(\pi.1)$, we deduce from~\cite[Section 8.1]{AGS} that $\Lambda$ has a continuous representative. Furthermore, an application of the superposition principle~\cite[Theorem~5.2]{AFMS} guarantees that there exists a probability measure $\boldsymbol{\eta} \in \Pp (C([0, T]; \Pp (\Vv)))$ such that $\Lambda_{t} = (e_{t})_{\#} \eeta$, where for $t \in [0, T]$ the evaluation map $e_{t}\colon C([0, T]; \Pp (\Vv)) \to \Pp(\Vv)$ is defined by $e_{t} (\lambda) = \lambda_{t}$ for every $\lambda \in C([0, T]; \Pp (\Vv))$. Moreover, $\eeta$ is supported on curves $\lambda \in C([0, T]; \Pp (\Vv))$ solutions to the ODE
\begin{align}
\label{e:cauchy-pb}
    \dot{\lambda}_{t} = \gamma \,  b (\lambda_{t})\,, \qquad \lambda_{0} \in {\rm spt} (\Lambda_{0})\,.
\end{align}
Notice that~\cite[Section I.3, Theorem 1.4, Corollary 1.1]{Brezis} yields the existence of solutions to the Cauchy problem~\eqref{e:cauchy-pb} for any initial condition $\lambda_{0} \in {\spt} (\Lambda_{0})$. This provides the relation between the notion of weak solution to the replicator equation and the classical replicator dynamics~\eqref{intro:replicator} and~\eqref{e:cauchy-pb}. Indeed, Definition~\ref{d:replicator-eq} and the continuity equation~\eqref{e:cont-eq} are the Eulerian counterpart to the (Lagrangian) replicator dynamics~\eqref{e:cauchy-pb}. 

Whenever $\pi \colon \Vv \times \Pp(\Vv) \to \R$ is assumed to be Lipschitz continuous in $\Vv \times \Pp(\Vv)$ endowed with the product metric, we further infer that the solution to~\eqref{e:cauchy-pb} is unique for any initial condition $\lambda_{0} \in \Pp(\Vv)$. In that case, letting $\Psi\colon [0, T] \times \Pp(\Vv) \to \Pp(\Vv)$ be the corresponding flow map, we have that $\Lambda_{t} = (\Psi (t, \cdot) )_{\#} \Lambda_{0}$.

\medskip

\paragraph{\bf Replicator equation with genetic drift and cylindrical tests.}
We briefly comment on the notion of very weak solution to the replicator equation with genetic drift~\eqref{e:fleming-viot}, as the second order term has no explicit form in general (we refer to Section~\ref{s:pure} for some explicit examples for a finite number of pure strategies). We consider a special class of test function made of cylindrical tests (see for instance~\cite[Definition 5.1.11]{AGS}) of the form
\begin{align}
\label{e:special-test}
    \psi (\lambda) = \varphi \big( \langle g_{1}  , \lambda\rangle , \ldots, \langle g_{m}  , \lambda\rangle \big)\,,  
\end{align}
for $\lambda \in \Pp(\Vv)$, $m \in \mathbb{N} \setminus\{0\}$, $\varphi \in C^{2} (\R^{m})$, and $g_{1}, \ldots, g_{m} \in C (\Vv)$. In~\eqref{e:special-test} we have introduced the short-hand notation $\langle g_{i}, \lambda\rangle := \int_{\Vv} g_{i} (\sigma) \di \lambda(\sigma)$. Then, $\psi \in C^{2} (\Pp(\Vv))$ with
\begin{align}
\label{e:cyl-gradient}
    & D_{\lambda} \psi (\lambda) [\mu] = \sum_{i=1}^{m} \partial_{i} \varphi \big( \langle g_{1}  , \lambda\rangle , \ldots, \langle g_{m}  , \lambda\rangle \big) \langle g_{i}  , \mu \rangle  \,,
    \\
    &
    \label{e:cyl-hessian}
    D^{2}_{\lambda} \psi (\lambda) [\mu, \theta] = \sum_{i, j=1}^{m} \partial^{2}_{ij} \varphi \big(\langle g_{1}  , \lambda\rangle , \ldots, \langle g_{m}  , \lambda\rangle \big)  \langle g_{i} , \mu \rangle \, \langle g_{j} ,  \theta \rangle
\end{align}
for every $\mu, \theta \in \R ( \Pp(\Vv) - \Pp(\Vv))$. 

If $\mu = b(\lambda) = (\pi_{\lambda} (\cdot) - \overline{\pi}_{\lambda}) \lambda \in  \R ( \Pp(\Vv) - \Pp(\Vv))$,~\eqref{e:cyl-gradient} writes as
\begin{align*}
  D_{\lambda} \psi (\lambda) [b(\lambda)] = \sum_{i=1}^{m} & \partial_{i} \varphi \big( \langle g_{1}  , \lambda\rangle , \ldots, \langle g_{m}  , \lambda\rangle \big) \int_{\Vv} g_{i} (\sigma) ( \pi_{\lambda} (\sigma) - \overline{\pi}_{\lambda})\, \di \lambda (\sigma) \,,  
\end{align*}
If $\mu = \theta = \delta_{\sigma} - \delta_{\overline{\sigma}}$ for $\sigma, \overline{\sigma} \in \Vv$,~\eqref{e:cyl-hessian} becomes
\begin{align*}
D^{2}_{\lambda} \psi (\lambda) [\delta_{\sigma} - \delta_{\overline{\sigma}}, \delta_{\sigma} - \delta_{\overline{\sigma}}] = \sum_{i, j=1}^{m}  & \partial^{2}_{ij} \varphi \big( \langle g_{1}  , \lambda\rangle , \ldots, \langle g_{m}  , \lambda\rangle \big) \big( g_{i} (\sigma) - g_{i} (\overline{\sigma}) \big) \big( g_{j} (\sigma) - g_{j} (\overline{\sigma}) \big)\,.
\end{align*}
Therefore, equation~\eqref{e:fleming-viot} for the tests~\eqref{e:special-test} reads
\begin{align}
\label{e:special-eq}
    & \int_{\Pp(\Vv)}  \psi (\lambda) \, \di (\Lambda_{T} - \Lambda_{0})(\lambda) =
    \\
    &
    \quad \gamma\int_{0}^{T} \int_{\Pp(\Vv)} \sum_{i=1}^{m}  \partial_{i} \varphi \big( \langle g_{1}  , \lambda\rangle , \ldots, \langle g_{m}  , \lambda\rangle \big) \int_{\Vv} g_{i} (\sigma) ( \pi_{\lambda} (\sigma) - \overline{\pi}_{\lambda})\, \di \lambda (\sigma)\, \di\Lambda_{t} (\lambda) \di t \nonumber
    \\
    &
    \quad + \nu \int_{0}^{T} \int_{\Pp(\Vv)} \sum_{i, j=1}^{m} \partial^{2}_{ij} \varphi \big(\langle g_{1}  , \lambda\rangle , \ldots, \langle g_{m}  , \lambda\rangle \big) \big(\langle g_{i} g_{j}, \lambda \rangle - \langle g_{i}, \lambda\rangle \, \langle g_{j}, \lambda\rangle \big) \, \di \Lambda_{t} (\lambda) \di t \,. \nonumber
\end{align}
In particular, the resulting equation~\eqref{e:special-eq} coincides with the Fleming--Viot equation considered in~\cite[Section~5]{Fle-Vio} for a general compact metric space $\Vv$ of strategies. We refer to Section~\ref{s:pure} for simplified expressions of~\eqref{e:fleming-viot} for a finite set of strategies~$\Vv$.

\section{Relevant examples and applications}
\label{S:examples}

In this section, we briefly discuss a few model cases fitting the presented theoretical framework. The general setting we consider is that of multi-agent multi-label systems, introduced in~\cite{AFMS, MS2020}. In our case the limit dynamics can be decomposed into two fundamental components: the replicator term, which accounts for natural selection (where strategies with higher-than-average fitness increase their prevalence in the population), and the Genetic Drift term, which represents the stochastic variations or errors that effectively diffuse the population density across the strategy space. 

\subsection{Mixed strategies}
\label{sub:mixed}
The starting point is a Moran process, which is a stochastic model introduced to describe mutations and genetic drift in a finite population \cite{Nowak}, for which any agents in a population of $N$-elements adopt a mixed strategy. Mathematically, the evolution consists of a Markov chain, where, at each discrete time step, one of the $N$ agents is selected to replicate their strategy with a probability proportional to the fitness of their strategy, while one of the $N$ agents is chosen uniformly to abandon their strategy.

We fix $(U, \mathrm{d}_U)$ a compact metric space of pure strategies and $\mathcal{P}(U)$ the associated space of probability measures over $U$ which represents the set of mixed strategies. We endow $\mathcal{P}(U)$ with the metric $\di_{\mathrm{BL}}$ induced by the space $\left(\mathcal{F}(U), \|\cdot\|_{\mathrm{BL}}\right)$ (see Section \ref{S:pre} for the definition of the Arens-Eells space $\mathcal{F}(U)$). This setting makes $\mathcal{P}(U)$ a compact and convex subset of $\mathcal{F}(U)$ (see Section 2.1 of \cite{AFMS}). In comparison with Sections~\ref{s:discrete-process}--\ref{s:compactness}, we remark that $\Pp(U)$ plays here the role of the compact metric space $\Vv$.

For $\lambda\in \mathcal{P}\left(\mathcal{P}(U)\right)$, we define the {\em payoff} function $\tilde{\pi}_\lambda \colon U \to \mathbb{R}$ as follows
\begin{equation}
    \label{e:J-payoff}
\tilde{\pi}_\lambda(u) := \int_{\mathcal{P}(U)}\int_U J(u,u') \di\tilde{\sigma}(u')\di \lambda(\tilde\sigma),
\end{equation}
where $J \colon U\times U \to \mathbb{R}$ is a continuous function. The associated {\em fitness} function $f_\lambda: U \to \mathbb{R}$ is defined by
$$
f_\lambda(u)=1+ w(\tilde{\pi}_\lambda(u)-1).
$$
It follows that the {\em fitness} function defined in \eqref{e:Flambda} and the related {\em payoff} function are given by
\begin{equation}
    \label{e:fit-payoff-mixed}
    F_\lambda(\sigma):=\int_U f_\lambda(u)\di\sigma(u) \qquad \text{and}\qquad \pi_\lambda(\sigma):= \int_U \tilde{\pi}_\lambda(u)\di\sigma(u).
\end{equation}
Note that, since $J$ is continuous, since $\mathcal{P}(U) \times \mathcal{P}(\mathcal{P}(U))$ is compact, and since the $\mathrm{BL}$ norm induces the weak-star topology in $\mathcal{P}(U)$, then $\pi \colon \mathcal{P}(U) \times \mathcal{P}(\mathcal{P}(U)) \to \mathbb{R}$ defined by $\pi(\sigma, \lambda):=\pi_\lambda(\sigma)$ is continuous, whence $(\pi.1)$ holds. Thus, we can apply Theorems \ref{t:compactness-1} and~\ref{t:compactness-3} to study the limit as $N \to +\infty$ of the number of agents.

\subsection{Finite, pure strategies: Fleming--Viot operator and Kimura equation}
\label{s:pure}
To have a better comparison with the existing literature and with the analysis of~\cite{MoOr}, we present the equation obtained in Theorem~\ref{t:compactness-1} and Definition~\ref{d:weak-replicator-mutator} in the particular case where the set of pure strategies $U =\{ u_{1}, \ldots, u_{M}\}$ is finite and each agent can only have a pure strategy $u_{i} \in U$ at disposal. For simplicity, we assume $\gamma =\nu= 1$.
We endow $U$ with the metric $\di_{U} \colon U \times U \to \{0, 1\}$ defined by
\begin{align*}
    \di_{U} (u_{i}, u_{j}) = \delta_{i}^{j} \qquad \text{for every $i, j \in \{1, \ldots, M\}$,}
\end{align*}
where $\delta_{i}^{j}$ denotes the Kronecker delta. This setting is simpler compared to that described in Section~\ref{sub:mixed}: the compact metric space $\Vv$ identifies now with $U$ itself, rather than with~$\Pp(U)$, as only pure strategies are allowed, which would corresponds with Dirac deltas in $\Pp(U)$ centered in the pure strategies $u_{i} \in U$. The distribution of strategies inside the population is represented by $\lambda \in \Pp(\Vv) = \Pp(U)$, which we may as well identify with the simplex
\begin{align*}
    \Delta^{M-1} := \bigg\{ \lambda = (\lambda_{1}, \ldots, \lambda_{M}) \in [0, 1]^{M}: \, \sum_{i=1}^{M} \lambda_{i} = 1 \bigg\}\,.
\end{align*}
Notice that the components $\lambda_{i}$ of $\lambda \in \Delta^{M-1}$ represent the frequency of the pure strategy $u_{i}$. We further remark that if $\psi \in C^{1} (\Delta^{M-1})$ is the restriction of a map $\psi \in C^{1} (\R^{M})$ to the simplex~$\Delta^{M-1}$, then the $C$-differential $D_{\lambda}\psi$ writes as
\begin{align}
\label{e:example-Cdiff}
    D_{\lambda} \psi (\lambda) = P \,\nabla \psi (\lambda) \qquad \lambda \in \Delta^{M-1}\,,
\end{align}
where $\nabla \psi$ stands for the usual gradient of~$\psi$ in $\R^{M}$ and $P \in \mathbb{M}^{M}$ denotes the projection over the hyperplane parallel to $\Delta^{M-1}$ and passing through the origin. Namely,
\begin{align*}
    P = I - \frac{1}{M} \xi \otimes \xi\,, \qquad \text{with } \xi := \left(\begin{array}{c}
    1 \\
    \vdots\\
    1
    \end{array}\right).
\end{align*}
Similarly, if $\psi \in C^{2} (\R^{M})$ then
\begin{align}
\label{e:example-Chess}
    D^{2}_{\lambda} \psi (\lambda) = P \, \nabla^{2} \psi(\lambda)\, P \qquad \lambda \in \Delta^{M-1}\,,
\end{align}
where $\nabla^{2}\psi$ is the Hessian of~$\psi$. 

As done in~\cite{MoOr}, we represent the payoff function~$J$ introduced in~\eqref{e:J-payoff} with a matrix $A \in \mathbb{M}^{M}$. Hence, for $\lambda \in \Delta^{M-1}$ and $u_{i} \in U$ the payoff $\pi_{\lambda} (u_{i})$ takes the form
\begin{align*}
    \pi_{\lambda} (u_{i})  = \sum_{j=1}^{M} A_{ij} \lambda_{j} = (A\lambda)_{i}\,.
\end{align*}
The fact that we only allow for pure strategies spares us two integrations with respect to $\sigma$, as it has been done in~\eqref{e:J-payoff} and~\eqref{e:fit-payoff-mixed}. We further notice that for $\lambda \in \Delta^{M-1}$ we have that
\begin{align*}
    \overline{\pi}_{\lambda} = \sum_{i, j=1}^{M} A_{ij} \lambda_{i} \lambda_{j} = A\lambda \cdot \lambda\,,
\end{align*}
which stands for the average payoff associated to~$\lambda$. For simplicity of notation and in agreement with~\cite{MoOr} we write $b(\lambda) \in \R^{M}$ as the vector with components
\begin{align}
\label{e:def-b}
    b_{i}(\lambda) := \big( ( A \lambda)_{i} - A\lambda \cdot \lambda \big) \lambda_{i} \qquad \text{for every $\lambda \in \Delta^{M-1}$.}
\end{align}
With this notation at hand, we rewrite the last two terms on the right-hand side of~\eqref{e:fleming-viot} for $\Lambda \in C([0, T]; (\Pp(\Delta^{M-1}); W_{1}))$. Precisely, for $\psi \in C^{2}([0, T] \times \Delta^{M-1})$ we have that
\begin{align}
\label{e:blambda0}
\int_{0}^{T}\int_{\Delta^{M-1}} &  D_{\lambda}\psi(t, \lambda) [( \pi_{\lambda} - \overline{\pi}_{\lambda} ) \lambda] \, \di \Lambda_{t} (\lambda) \, \di t  = \int_{0}^{T}\int_{\Delta^{M-1}} D_{\lambda}\psi(t, \lambda) \cdot b(\lambda) \, \di \Lambda_{t} (\lambda) \, \di t
\\
&
= \int_{0}^{T} \int_{\Delta^{M-1}} \nabla \psi (t, \lambda) \cdot P \, b(\lambda)\, \di \Lambda_{t} (\lambda) \, \di t = \int_{0}^{T} \int_{\Delta^{M-1}} \nabla \psi (t, \lambda) \cdot  b(\lambda)\, \di \Lambda_{t} (\lambda) \, \di t\,, \nonumber
\end{align}
where in the last equality we have used that $b(\lambda) \cdot \xi = 0$ for $\lambda \in \Delta^{M-1}$.

As for the last term on the right-hand side of~\eqref{e:fleming-viot}, we once again recall that $\sigma$ can only take values in the finite set~$U$. In view of the identification $\Pp(U) = \Delta^{M-1}$, we notice that for every $i\in \{1, \ldots, M\}$ we are identifying $\delta_{u_{i}}$ with the $i$-th vector~$e_{i}$ of the canonical basis of~$\R^{M}$. Hence, we have that
\begin{align*}
     \int_{0}^{T} \int_{\Pp(\Vv)} &  \int_{\Vv} \int_{\Vv} D^{2}_{\lambda} \psi(t, \lambda)[\delta_{\sigma} - \delta_{\overline{\sigma}} \ , \ \delta_{\sigma} - \delta_{\overline{\sigma}}] \, \di \lambda(\sigma) \, \di \lambda(\overline{\sigma}) \, \di \Lambda_{t} (\lambda) \, \di t
\end{align*}
is identified with
\begin{align}
\label{e:formal}
     \int_{0}^{T} \int_{\Delta^{M-1}}  \sum_{i, j=1}^{M} \lambda_{i} \lambda_{j}D_{\lambda}^{2} \psi(t, \lambda) [e_{i} - e_{j} , e_{i} - e_{j}]\, \di \Lambda_{t} (\lambda) \, \di t \,.
\end{align}
Rearranging the terms on the right-hand side of~\eqref{e:formal}, using the fact that $\lambda \in \Delta^{M-1}$ and the relation~\eqref{e:example-Chess} we deduce that
\begin{align}
    \label{e:formal2}
    \int_{0}^{T}  \int_{\Delta^{M-1}} &   \sum_{i, j=1}^{M} \lambda_{i} \lambda_{j}D_{\lambda}^{2} \psi(t, \lambda) [e_{i} - e_{j} , e_{i} - e_{j}]\, \di \Lambda_{t} (\lambda) \, \di t
    \\
    &
    \quad = \int_{0}^{T} \int_{\Delta^{M-1}} 2 \sum_{i=1}^{M} \lambda_{i} \nabla^{2} \psi (t, \lambda) [P e_{i}, P e_{i}] \, \di \Lambda_{t} (\lambda) \, \di t \nonumber
    \\
    &
    \qquad - \int_{0}^{T} \int_{\Delta^{M-1}} 2\sum_{i, j=1}^{M} \lambda_{i} \lambda_{j} \nabla^{2} \psi (t, \lambda) [P e_{i}, P e_{j}]\, \di \Lambda_{t}(\lambda) \, \di t  \nonumber
    \\
    &
    \quad = 2 \int_{0}^{T} \int_{\Delta^{M-1}} \sum_{i, j=1}^{M} ( \lambda_{i} \delta_{i}^{j} -  \lambda_{i} \lambda_{j} ) \nabla^{2} \psi (t, \lambda) [P e_{i}, P e_{j}]  \, \di \Lambda_{t}(\lambda) \, \di t\,. \nonumber
\end{align}
 We define $B \colon \Delta^{M-1} \to \mathbb{M}^{M}$ by
\begin{align*}
    B_{ij}(\lambda) := ( \lambda_{i} \delta_{i}^{j} -  \lambda_{i} \lambda_{j} ) \qquad \text{for $i, j \in \{1, \ldots, M\}$ and $\lambda \in \Delta^{M-1}$.}
\end{align*}
Then, we rewrite~\eqref{e:formal2} as
\begin{align}
    \label{e:formal30}
    \int_{0}^{T} &  \int_{\Delta^{M-1}}     \sum_{i, j=1}^{M} \lambda_{i} \lambda_{j}D_{\lambda}^{2} \psi(t, \lambda) [e_{i} - e_{j} , e_{i} - e_{j}]\, \di \Lambda_{t} (\lambda) \, \di t
    \\
    &
     = 2 \int_{0}^{T} \int_{\Delta^{M-1}} P B(\lambda)P : \nabla^{2} \psi(t, \lambda) \, \di \Lambda_{t} (\lambda)\, \di t =  2 \int_{0}^{T} \int_{\Delta^{M-1}}  B(\lambda) : \nabla^{2} \psi(t, \lambda) \, \di \Lambda_{t} (\lambda)\, \di t\,, \nonumber
\end{align}
where, in the last equality, we use that $P B(\lambda) P = B(\lambda)$, as a direct computation shows. We define 
\begin{align}
\label{e:blambda}
\mathcal{L}(\phi)(\lambda) & \coloneqq D_{\lambda}\phi(\lambda) \cdot b(\lambda) +  D^{2}_{\lambda} \phi (\lambda) : B(\lambda)  \\
& = \sum_{i=1}^{M}  b_i(\lambda) D_{\lambda}\phi(\lambda)[e_i] + \sum_{i, j=1}^{M} ( \lambda_{i} \delta_{i}^{j} -  \lambda_{i} \lambda_{j} ) D^{2}_{\lambda} \phi (\lambda) [e_{i}, e_{j}]  \nonumber
\end{align}
and notice that if $\phi \in C^{2} (\R^{M})$, then
\begin{align*}
    \mathcal{L}(\phi)(\lambda) & =  D_{\lambda}\phi(\lambda) \cdot b(\lambda) +  D^{2}_{\lambda} \phi (\lambda) : B(\lambda) =  \nabla \phi(\lambda) \cdot b(\lambda) +  \nabla^{2} \phi (\lambda) : B(\lambda)
    \\
    &
    = \sum_{i=1}^{M}  b_i(\lambda) \partial_{i} \phi(\lambda) + \sum_{i, j=1}^{M} ( \lambda_{i} \delta_{i}^{j} -  \lambda_{i} \lambda_{j} ) \partial_{i} \partial_{j} \phi (\lambda)\,, 
\end{align*}
so that $\mathcal{L}(\phi)$ defines the Fleming--Viot operator in the form of~\cite[Eq.~(2.1)]{Fle-Vio}.

Then, combining~\eqref{e:fleming-viot},~\eqref{e:blambda0}, \eqref{e:formal} and~\eqref{e:formal30}, we infer that in the interior of~$\Delta^{M-1}$ the curve $t \mapsto \Lambda_{t}$ solves (in the sense of Definition~\ref{d:weak-replicator-mutator}) the equation
\begin{align}
\label{e:strong-equation}
    \partial_{t} \Lambda_{t} + {\rm div}_{\lambda} \big[ b(\lambda) \Lambda_{t} \big] -  {\rm div}_{\lambda} \big[ {\rm div}_{\lambda}\big ( B(\lambda) \Lambda_{t} \big) \big] = 0\,,
\end{align}
obtained as the adjoint of the Fleming--Viot operator~\eqref{e:blambda}. Here, ${\rm div}_{\lambda}$ denotes the tangential divergence on $\Delta^{M-1}$, defined as the adjoint operator to the $C$-differential (or tangential gradient) $D_{\lambda}$. We further notice that ${\rm div}_{\lambda} {\rm div}_{\lambda}$ is the adjoint to $D^{2}_{\lambda}$ and corresponds to applying twice the tangential divergence.

The particular case $M=2$ leads to the well-known Kimura equation~\cite{Kimura1, Kimura2}, which we can directly deduce from~\eqref{e:blambda} and~\eqref{e:fleming-viot} after identification of the $C$-differential $D_{\lambda} \phi$ given in~\eqref{e:example-Cdiff}. Assume that the test function $\phi$ is the restriction to $\Delta^{1}$ of a $C^{2}$-function of the wholel space~$\R^{2}$. Then, the tangential gradient $D_{\lambda} \phi$ writes for $\lambda = (\lambda_{1} ,  \lambda_{2}) \in \Delta^{1}$ as
\begin{align*}
    D_{\lambda} \phi (\lambda) = \frac{1}{2}
    \left( 
    \begin{array}{c}
    \partial_{1} \phi (\lambda) - \partial_{2} \phi (\lambda) \\
    \partial_{2} \phi (\lambda) - \partial_{1} \phi (\lambda)
    \end{array}
    \right),
\end{align*}
where $\partial_{i}$ denotes the usual partial derivative with respect to $\lambda_{i}$. Similarly, we have that
\begin{align*}
    D^{2}_{\lambda} \phi (\lambda) = \frac{1}{4}\left(
\begin{array}{cc}
\partial^{2}_{1} \phi (\lambda) - \partial_{1} \partial_{2} \phi(\lambda) & \partial_{1} \partial_{2} \phi(\lambda) - \partial^{2}_{2} \phi(\lambda) \\
 \partial_{1} \partial_{2} \phi(\lambda) - \partial^{2}_{1} \phi (\lambda) & \partial^{2}_{2} \phi(\lambda) - \partial_{1} \partial_{2} \phi(\lambda) 
\end{array}
    \right).
\end{align*}
Further notice that defining $\varphi (\lambda_{1}) := \phi (\lambda_{1}, 1 - \lambda_{1})$ for $\lambda_{1} \in [0, 1]$, then, using the constraint $\lambda_{2} = 1 - \lambda_{1}$ in $\Delta^{1}$, we get that
\begin{align*}
     D_{\lambda} \phi (\lambda_{1}, 1 - \lambda_{1}) = 
      \frac{1}{2}
    \left( 
    \begin{array}{c}
    \varphi' (\lambda_{1})  \\
    - \varphi'(\lambda_{1})
    \end{array}
    \right) \qquad 
     D^{2}_{\lambda} \phi (\lambda_{1}, 1 - \lambda_{1}) = \frac{1}{4}\left(
\begin{array}{cc}
\varphi''(\lambda_{1})  & - \varphi''(\lambda_{1}) \\
 - \varphi''(\lambda_{1}) &   \varphi''(\lambda_{1})
\end{array}
    \right).
\end{align*}
Hence, recalling \eqref{e:def-b}, we write \eqref{e:blambda} as
\begin{align*}
    & \mathcal{L}   (\phi) (\lambda_{1}, 1 - \lambda_{1}) \\
    & =  \frac{1}{2}\lambda_{1} \varphi'(\lambda_{1}) \big( a_{11} \lambda_{1} + a_{12} (1 - \lambda_{1}) - a_{11} \lambda_{1}^{2} - a_{12} \lambda_{1} (1 - \lambda_{1}) - a_{21} \lambda_{1} (1 - \lambda_{1}) - a_{22} (1 - \lambda_{1}^{2}) \big) 
    \\
    &
    \quad - \frac{1}{2} ( 1 - \lambda_{1})  \varphi'(\lambda_{1}) \big( a_{21} \lambda_{1} + a_{22} (1 - \lambda_{1}) - a_{11} \lambda_{1}^{2} - a_{12} \lambda_{1} (1 - \lambda_{1}) - a_{21} \lambda_{1} (1 - \lambda_{1}) - a_{22} (1 - \lambda_{1}^{2}) \big)
    \\
    &
    \quad \vphantom{\frac{1}{2}} + \lambda_{1} ( 1 - \lambda_{1}) \varphi''(\lambda_{1}) 
    \\
    &
    =  \lambda_{1} ( 1 - \lambda_{1})  \varphi'(\lambda_{1}) \big(a_{11} \lambda_{1} + a_{12} ( 1  -  \lambda_{1} ) - a_{21} \lambda_{1} - a_{22} (1 - \lambda_{1}) \big) +  \lambda_{1} ( 1 - \lambda_{1}) \varphi''(\lambda_{1})\,.
\end{align*}
Taking $H\colon \R \to \R$ such that $H'(r) = a_{11} r + a_{12} ( 1  -  r ) - a_{21} r - a_{22} (1 - r)$, equation~\eqref{e:strong-equation} reduces to the Kimura equation
\begin{align}
\label{e:kimura-example}
    \partial_{t} \Lambda_{t} + \partial_{\lambda_{1}}  \big[ \lambda_{1} ( 1 - \lambda_{1}) H'(\lambda_{1}) \Lambda_t \big] -  \partial^{2}_{\lambda_{1}}\big( \lambda_{1} (1 -\lambda_{1}) \Lambda_{t} \big) = 0\,,
\end{align}
where $\Lambda_{t}$ can be identified with a curve of probability measures over $\mathcal{P} ([0, 1])$. 

Equation~\eqref{e:kimura-example} has been studied, for instance, in~\cite{Carrilloetal-2022, Casteras-Monsaingeon}. As noted in Remark~\ref{r:kimura-fleming-viot}, we are considering here the same notion of very weak solution given in~\cite{Casteras-Monsaingeon}. Hence,~\eqref{e:kimura-example} comes with no additional boundary conditions and concentration effects in $\lambda_{1} = 0$ and~$\lambda_{1} = 1$ may happen, corresponding to extinction and fixation effects. The characterization of such effects has indeed been considered in~\cite{Casteras-Monsaingeon} as a consequence of mass conservation over the interval $[0, 1]$. 

\subsection{Spin systems}
\label{s:spin}
In this section, we present an example of a discrete-time process with a genuinely infinite set of strategies, borrowing ideas from the theory of spin systems in statistical mechanics.

We consider a Moran process in population of $N$ agents with a set of pure strategies given by $\Vv = \mathbb{S}^1$, where $\mathbb{S}^1$ is the unit circle in $\mathbb{R}^2$. The set of strategies $\Vv$ is endowed with the geodesic distance $\di_{\mathbb{S}^1}$, which makes $(\mathbb{S}^1, \di_{\mathbb{S}^1})$ a compact metric space.


In the discrete Moran process, at time $t^{k}_{m}$, each agent $n \in \{1, \ldots, N\}$ adopts a (random) strategy $\sigma^{N}_{n}(t^{k}_{m}) \in \mathbb{S}^1$. The distribution of strategies within the finite population is represented by the empirical measure $\lambda^N_{t^{k}_{m}} \in \Pp(\mathbb{S}^1)$ defined by
\begin{equation*}
    \lambda^N_{t^{k}_{m}} = \frac{1}{N} \sum_{n=1}^N \delta_{\sigma^{N}_{n}(t^{k}_{m})} \in \Pp(\mathbb{S}^1) \, . 
\end{equation*}
As described in the general setting, the probability of replication of a strategy $\sigma \in \mathbb{S}^1$ in a distribution of strategies $\lambda$ is mediated by the fitness $F_\lambda(\sigma)$ defined in terms of the payoff function as in~\eqref{e:Flambda}. We consider a payoff function given by, for every $\lambda \in \Pp(\mathbb{S}^1)$ and $\sigma \in \mathbb{S}^1$, 
\begin{equation} \label{eq:payoff-spins}
    \pi_{\lambda}(\sigma) = \int_{\mathbb{S}^1} \sigma \cdot \overline{\sigma} \, \di \lambda(\overline{\sigma}) = \sigma \cdot m_\lambda\,, 
\end{equation}
where 
\begin{equation*}
    m_\lambda = \int_{\mathbb{S}^1} \overline{\sigma} \, \di \lambda{(\overline{\sigma})} \in \R^2
\end{equation*}
is the magnetization.
This payoff function favors the replication of strategies that are aligned with the magnetization, mimicking the behavior of spin systems in statistical mechanics, like the classical planar XY model, where the local interaction energy between spins is proportional to their alignment.
Here, however, the lattice where agents sit is fully connected, \emph{i.e.}, each spin interacts with all other spins.
The choice of the specific payoff in~\eqref{eq:payoff-spins} is made for illustrative purposes. Other choices relevant in statistical mechanics models can be made (\emph{e.g.}, antiferromagnetic interactions). 

A comment about the mathematical relevance of the limit as $N \to +\infty$ is in order. 

\begin{remark}
    Let us consider the case where $N$ is fixed. It is very natural to assume that the initial distribution of strategies $\lambda^N_{0}$ is concentrated on a finite number of deterministic strategies $U = \{\sigma_1, \ldots, \sigma_M\} \subset \mathbb{S}^1$ (\emph{e.g.}, when one knows that each agent has adopted one strategy). Since the Moran process does not allow for the nucleation of new strategies, the evolution is confined to the finite set of strategies $U$. This means that, for a fixed $N$, the dynamics of the Moran process can be described by a finite-dimensional system, falling into the structure of the discrete process in Subsection~\ref{s:pure}, even though the underlying strategy space $\mathbb{S}^1$ is infinite. Nonetheless, in the limit as $N \to +\infty$, the number of strategies can explode, and the dynamics can explore the entire infinite strategy set $\mathbb{S}^1$. This highlights the importance of understanding the limit as $N \to +\infty$, as it allows for a richer and more complex behavior that cannot be captured by a finite-dimensional system. 
\end{remark}

Let us now discuss the limit model, described in terms of the limit of the curves $t \mapsto \Lambda^{N}_t \in \Pp(\Pp(\mathbb{S}^1))$.

\begin{remark} \label{rmk:pi-lipschitz}
First of all, notice that $(\pi.1)$ holds, \emph{i.e.}, the function $\pi \colon \mathbb{S}^1 \times \Pp(\mathbb{S}^1) \to \mathbb{R}$ defined by $\pi(\sigma, \lambda):=\pi_\lambda(\sigma)$ is Lipschitz continuous, where $\Pp(\mathbb{S}^1)$ is endowed with the metric induced by the BL-norm~$\| \cdot\|_{\rm BL}$. 
Indeed, if $\sigma, \sigma' \in \mathbb{S}^1$ and $\lambda, \lambda' \in \Pp(\mathbb{S}^1)$, then
\begin{align*} 
    | \pi_{\lambda}(\sigma) - \pi_{\lambda'}(\sigma')| & \leq | \pi_{\lambda}(\sigma) - \pi_{\lambda}(\sigma') | + | \pi_{\lambda}(\sigma') - \pi_{\lambda'}(\sigma') | \\ 
                                                       & = | \sigma \cdot m_{\lambda} - \sigma' \cdot m_{\lambda}| + | \sigma' \cdot m_{\lambda} - \sigma' \cdot m_{\lambda'} | \\ 
                                                       & \leq | \sigma - \sigma' | | m_{\lambda} | + | \sigma' | |m_{\lambda} - m_{\lambda'}| \\ 
                                                       & \leq \di_{\mathbb{S}^1}(\sigma,\sigma') + 2\| \lambda - \lambda' \|_{\mathrm{BL}}\,.
\end{align*}
In addition, considering the average payoff
\begin{equation*}
    \overline{\pi}_{\lambda} = \int_{\mathbb{S}^1} \pi_\lambda(\sigma) \, \di \lambda(\sigma) = \int_{\mathbb{S}^1} \sigma \cdot m_\lambda \, \di \lambda(\sigma) = |m_\lambda|^2 ,
\end{equation*}
we have that $\lambda \mapsto |m_\lambda|^2$ is Lipschitz continuous, since 
\begin{equation*}
    \big| |m_\lambda|^2 - |m_{\lambda'}|^2 \big| \leq |m_\lambda - m_{\lambda'}||m_{\lambda} + m_{\lambda'}| = \Big| \int_{\mathbb{S}^1} \sigma \, \di (\lambda - \lambda') \Big| \Big( |m_{\lambda}| + |m_{\lambda'}| \Big) \leq 4 \| \lambda - \lambda' \|_{\mathrm{BL}} \, . 
\end{equation*}
\end{remark}

We are in a position to apply Theorem~\ref{t:compactness-3} to study the limit under the rescaling of the parameters $N = N_k$, $\tau = \tau_k$ (the discrete time step), and $w = w_k$ prescribed by Theorem~\ref{t:compactness-3}. Thus, there exists a Lipschitz continuous limit curve $t \mapsto \Lambda_t \in \Pp(\Pp(\mathbb{S}^1))$ which is a weak solution to the Eulerian replicator equation~\eqref{e:weak-replicator}, \emph{i.e.}, for every $\psi \in C^1([0,T] \times \Pp(\mathbb{S}^1))$ it holds that 
\begin{align}
    \int_{\Pp(\mathbb{S}^1)} \psi (T, \lambda) \, \di \Lambda_{T} (\lambda) - \int_{\Pp(\mathbb{S}^1)} \psi(0, \lambda) \, \di \Lambda_{0} (\lambda) = & \int_{0}^{T} \int_{\Pp(\mathbb{S}^1)} \partial_{t} \psi(t, \lambda) \, \di \Lambda_{t} (\lambda) \, \di t \label{eq:eulerian-replicator-spin}
    \\
    &
    \nonumber + \gamma\int_{0}^{T} \int_{\Pp(\mathbb{S}^1)} D_{\lambda} \psi(t, \lambda) [(\pi_{\lambda} - \overline{\pi}_{\lambda}) \lambda]\, \di \Lambda_{t} (\lambda) \, \di t\,.
\end{align}
As discussed in Section~\ref{sub:comments}, the weak formulation~\eqref{eq:eulerian-replicator-spin} is the Eulerian specification of an ODE on the metric space $\Pp(\mathbb{S}^1)$, endowed with the distance $\di_{\mathrm{BL}}$ induced by the BL norm, \emph{i.e.}, regarding $\Pp(\mathbb{S}^1)$ as a convex compact subset of the separable Banach space $\mathcal{F}(\mathbb{S}^1)$ introduced in Section~\ref{S:pre}. 

In this example, 
the replicator vector field is the measure $b(\lambda) \in E_{\Pp(\mathbb{S}^1)}$ defined by  
\begin{equation*}
    \int_{\mathbb{S}^1} \varphi(\sigma) \, \di b(\lambda)(\sigma) = \int_{\mathbb{S}^1} (\pi_{\lambda}(\sigma) - |m_{\lambda}|^2) \varphi(\sigma) \, \di \lambda(\sigma) \, , \quad \text{for every } \varphi \in \Lip(\mathbb{S}^1) \, .
\end{equation*}

\begin{remark} \label{rmk:b-lipschitz}
    The map $\lambda \mapsto b(\lambda)$ is Lipschitz continuous in $E_{\Pp(\mathbb{S}^1)}$, since, for every $\varphi \in \Lip(\mathbb{S}^1)$ with $\|\varphi\|_{\Lip} \leq 1$, using Remark~\ref{rmk:pi-lipschitz}, for some positive constant $C$,
\begin{align*}
    \int_{\mathbb{S}^1} \varphi(\sigma) \, \di (b(\lambda) - b(\lambda'))(\sigma) & = \int_{\mathbb{S}^1}  \varphi(\sigma) (\pi_{\lambda}(\sigma) - |m_{\lambda}|^2) \, \di \lambda(\sigma) - \int_{\mathbb{S}^1} \varphi(\sigma) (\pi_{\lambda'}(\sigma) - |m_{\lambda'}|^2) \, \di \lambda'(\sigma) \\ 
                                                                                  & = \int_{\mathbb{S}^1} \varphi(\sigma) (\pi_{\lambda}(\sigma) - \pi_{\lambda'}(\sigma)) \, \di \lambda(\sigma) + \int_{\mathbb{S}^1} \varphi(\sigma) \pi_{\lambda'}(\sigma) \, \di (\lambda - \lambda')(\sigma) \\ 
                                                                                  & \quad + \int_{\mathbb{S}^1} \varphi(\sigma) \big(|m_{\lambda}|^2 - |m_{\lambda'}|^2\big) \, \di \lambda(\sigma) + \int_{\mathbb{S}^1} \varphi(\sigma) |m_{\lambda'}|^2 \, \di (\lambda - \lambda')(\sigma) \\ 
                                                                                  & \leq C \| \lambda - \lambda' \|_{\mathrm{BL}} \, .
\end{align*}
\end{remark}

By Remark~\ref{rmk:b-lipschitz} and~\cite{Brezis}, 
the flow map~$\Psi (t, \cdot)$ related to the ODE in $E_{\Pp(\mathbb{S}^1)}$
\begin{equation} \label{eq:replicator-spin}
       \dot \lambda_t = \gamma \, b(\lambda_t) \, . 
\end{equation}
is well-defined. The ODE~\eqref{eq:replicator-spin} is precisely the replicator equation for distribution of strategies $\lambda$ in the case of a continuum of strategies. As a consequence, there exists a unique solution $\Lambda$ to~\eqref{eq:eulerian-replicator-spin} given by~$\Lambda_{t}  = (\Psi (t, \cdot) )_{\#} \Lambda_{0}$ for every initial condition $\Lambda_0 \in \Pp(\Pp(\mathbb{S}^1))$, which coincides with the solution given by Theorem \ref{t:compactness-3}.

\section{Approximate equation}
\label{s:approximate equation}

The goal of this section is to prove the approximate equation contained in Theorem \ref{t:almost-equation} on a single interval of the form $(t^{k}_{m-1}, t^{k}_{m})$ for $m \in \{1, \ldots, k\}$. As a direct consequence, we will get in Corollary~\ref{c:almost-equation} an approximate equation on the whole interval $[0, T]$ and a Lipschitz-type estimate in Corollary~\ref{c:almost-equation-2}. The estimates of Theorem~\ref{t:almost-equation} also highlight the scalings of the parameters $N$, $\tau_{k}$ and $w$ involved in the time-discrete process. We use from now on the symbol $O(\vartheta)$ to denote a generic function of~$\vartheta \in [0, +\infty)$ continuous in~$0$ and satisfying
\begin{align*}
0 \leq O(\vartheta) \leq \alpha \vartheta \qquad \text{for $\vartheta \in [0, +\infty)$,}
\end{align*}
for some $\alpha \in (0, +\infty)$. 

\begin{theorem}
    \label{t:almost-equation}
    Assume that $(\pi.1)$ holds. For every $k \in \mathbb{N} \setminus \{0\}$, every $m \in \{1, \ldots, k\}$, and every $\psi \in C^{2} ([0, T] \times \Pp(\Vv))$ we have that
\begin{itemize}
\item [(i)] if $w \in \left[0, \frac{1}{2 (C_{\pi} + 1)}\right]$, then
    \begin{align}
    \label{e:approx_eq}
        \int_{\Pp(\Vv)} & \psi (t^{k}_{m}, \lambda) \, \di \Lambda^{N}_{t^{k}_{m}} (\lambda) -     \int_{\Pp(\Vv)} \psi (t^{k}_{m-1}, \lambda) \, \di \Lambda^{N}_{t^{k}_{m-1}} (\lambda) = \int_{t^{k}_{m-1}}^{t^{k}_{m}} \int_{\Pp(\Vv)} \partial_{t} \psi(t, \lambda) \, \di \Lambda^{N}_{t} (\lambda) \, \di t 
        \\
        &
        +  \frac{w}{\tau_{k} N}  \int_{t^{k}_{m-1}}^{t^{k}_{m}} \int_{\Pp(\Vv)} D_{\lambda} \psi (t, \lambda) [(\pi_{\lambda} - \overline{\pi}_{\lambda}) \lambda] \, \di \overline{\Lambda}^{N}_{t} (\lambda) \, \di t  \nonumber
        \\
        &
        +\frac{1}{2 \tau_{k} N^{2}} \int_{t^{k}_{m-1}}^{t^{k}_{m}} \int_{\Pp(\Vv)} \int_{\Vv} \int_{\Vv} D^{2}_{\lambda} \psi (t, \lambda) [\delta_{\sigma} - \delta_{\overline{\sigma}} \ , \ \delta_{\sigma} - \delta_{\overline{\sigma}} ]  \, \di \lambda(\sigma) \, \di \lambda(\overline{\sigma}) \, \di \overline{\Lambda}^{N}_{t} (\lambda) \, \di t \nonumber
        \\
        &
        + \vphantom{\int_{t^{k}_{m-1}}^{t^{k}_{m}} }\rho_{m}(\psi, \tau_{k}, N, w)\,,\nonumber
    \end{align}
    where
    \begin{align}
    \label{e:rhok}
       | \rho_{m} (\psi, \tau_{k}, N, w)|  \leq & \ O \bigg(\frac{  w^{2}  }{ N} \bigg) \| D \psi\|_{L^{\infty} ([0, T]\times \Pp(\Vv))}  + O \bigg( \frac{  w }{ N^{2}} \bigg) \| D^{2} \psi\|_{L^{\infty} ([0, T]\times \Pp(\Vv))}
        \\
        &
         + O \bigg(\frac{1}{N^{2}} \bigg) \bigg( \mathfrak{w}_{D^{2}\psi} (\tau_{k}) +\mathfrak{w}_{D^{2}\psi} \bigg(\frac{1}{N} \bigg) \bigg)  \nonumber
    \end{align}
    and $\mathfrak{w}_{D^{2}\psi}$ denotes a modulus of continuity of~$D^{2}\psi$ in $[0, T]\times \Pp(\Vv)$;
\item [(ii)] if \eqref{strong-sel-ass} holds, then
 \begin{align}
    \label{e:approx_eq-strong}
        \int_{\Pp(\Vv)} & \psi (t^{k}_{m}, \lambda) \, \di \Lambda^{N}_{t^{k}_{m}} (\lambda) -     \int_{\Pp(\Vv)} \psi (t^{k}_{m-1}, \lambda) \, \di \Lambda^{N}_{t^{k}_{m-1}} (\lambda) = \int_{t^{k}_{m-1}}^{t^{k}_{m}} \int_{\Pp(\Vv)} \partial_{t} \psi(t, \lambda) \, \di \Lambda^{N}_{t} (\lambda) \, \di t 
        \\
        &
        +  \frac{1}{\tau_{k} N}  \int_{t^{k}_{m-1}}^{t^{k}_{m}} \int_{\Pp(\Vv)} D_{\lambda} \psi (t, \lambda) \left[\frac{w( \pi_{\lambda} - \overline{\pi}_{\lambda})\lambda}{1 + w( \overline{\pi}_{\lambda} - 1) } \right]  \, \di \overline{\Lambda}^{N}_{t} (\lambda) \, \di t  \nonumber
        + \vphantom{\int_{t^{k}_{m-1}}^{t^{k}_{m}} }\zeta_{m}(\psi, N)\,,\nonumber
    \end{align}
 where
    \begin{align}
    \label{e:rhok-strong}
       | \zeta_{m} (\psi, N)|  \leq &  O \bigg( \frac{ 1 }{ N^{2}} \bigg) \| D^{2} \psi\|_{L^{\infty} ([0, T]\times \Pp(\Vv))}\,.
    \end{align}
\end{itemize}
\end{theorem}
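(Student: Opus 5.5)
We split the left-hand side of \eqref{e:approx_eq} into a time-variation part and a space-variation part. On $[t^k_{m-1},t^k_m]$ the interpolant $\lambda^N_t$ is affine in $t$, so
\[
\psi(t^k_m,\lambda^N_{t^k_m})-\psi(t^k_{m-1},\lambda^N_{t^k_{m-1}})=\int_{t^k_{m-1}}^{t^k_m}\partial_t\psi(t,\lambda^N_t)\,\di t+\int_{t^k_{m-1}}^{t^k_m}\frac{1}{\tau_k}D_\lambda\psi(t,\lambda^N_t)\big[\lambda^N_{t^k_m}-\lambda^N_{t^k_{m-1}}\big]\,\di t .
\]
Taking expectations, the first term gives exactly $\int\!\int\partial_t\psi\,\di\Lambda^N_t\,\di t$. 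For the second, we expand $D_\lambda\psi(t,\lambda^N_t)$ around $\overline\lambda^N_t=\lambda^N_{t^k_{m-1}}$. Write $\Delta:=\lambda^N_{t^k_m}-\lambda^N_{t^k_{m-1}}$, so that $\lambda^N_t-\overline\lambda^N_t=\frac{t-t^k_{m-1}}{\tau_k}\Delta$. A first-order Taylor expansion of $D_\lambda\psi(t,\cdot)$ then yields
\[
D_\lambda\psi(t,\overline\lambda)[\Delta]+\frac{t-t^k_{m-1}}{\tau_k}D^2_\lambda\psi(t,\overline\lambda)[\Delta,\Delta]+R,
\]
with $|R|\le \|\Delta\|_{\rm BL}^2\,\mathfrak w_{D^2\psi}(\|\Delta\|_{\rm BL})$. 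Since $\|\Delta\|_{\rm BL}\le 2/N$ (Lemma~\ref{l:distance}), this remainder is $O(N^{-2})\mathfrak w_{D^2\psi}(1/N)$. In (ii) we will not expand to second order: the mean value bound $|R|\le\|D^2\psi\|_\infty\|\Delta\|^2$ already gives $O(N^{-2})\|D^2\psi\|_\infty$.

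Next we compute conditional moments of $\Delta$ given $\bm\sigma^N(t^k_{m-1})=\bm\sigma$, with $\lambda=\frac1N\sum\delta_{\sigma_n}$. By \eqref{def-tran-prob}, $\Delta=\frac1N(\delta_{\sigma_{n'}}-\delta_{\sigma_n})$ with probability $\frac{F_\lambda(\sigma_{n'})}{N^2\overline F_\lambda}$. The first moment is
\[
\mathbb E[\Delta\mid\bm\sigma]=\frac1N\Big(\frac{F_\lambda\lambda}{\overline F_\lambda}-\lambda\Big)=\frac1N\,\frac{w(\pi_\lambda-\overline\pi_\lambda)\lambda}{1+w(\overline\pi_\lambda-1)},
\]
because $F_\lambda-\overline F_\lambda=w(\pi_\lambda-\overline\pi_\lambda)$. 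By linearity of $D_\lambda\psi(t,\lambda)$ and conditioning, this gives the drift term of (ii) directly, with $\overline\Lambda^N_t$ as integrating measure. In (i), we use $|1+w(\overline\pi_\lambda-1)|^{-1}=1+O(w)$ for $w\le \frac1{2(C_\pi+1)}$, together with \eqref{e:pi-bdd}, to replace the field by $w(\pi_\lambda-\overline\pi_\lambda)\lambda$. The error is $O(w^2/N)\|D\psi\|_\infty$ per unit time; integrated over an interval of length $\tau_k$ with the prefactor $1/\tau_k$, it stays $O(w^2/N)$.

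The second moment is
\[
\mathbb E\big[D^2_\lambda\psi[\Delta,\Delta]\mid\bm\sigma\big]=\frac1{N^2}\int\!\!\int D^2_\lambda\psi[\delta_{\sigma'}-\delta_\sigma,\delta_{\sigma'}-\delta_\sigma]\frac{F_\lambda(\sigma')}{\overline F_\lambda}\,\di\lambda(\sigma)\,\di\lambda(\sigma').
\]
Replacing $F_\lambda/\overline F_\lambda$ by $1$ costs $O(w)\cdot O(N^{-2})\|D^2\psi\|_\infty$. Integrating $\frac{t-t^k_{m-1}}{\tau_k}$ over the step gives $\tau_k/2$; combined with the prefactor $1/\tau_k$, this produces the factor $\frac{1}{2\tau_kN^2}\int_{t^k_{m-1}}^{t^k_m}$ in \eqref{e:approx_eq}.

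The step we expect to need most care is making the time dependence consistent. The Hessian is evaluated at $(t,\overline\lambda)$ while the weight $\frac{t-t^k_{m-1}}{\tau_k}$ is traded for the constant $\frac12$ over the interval. To do this we freeze $D^2\psi(t,\cdot)$ at $t^k_{m-1}$, which costs $O(N^{-2})\mathfrak w_{D^2\psi}(\tau_k)$; we then average the weight exactly and unfreeze again at the same cost. This accounts for the $\mathfrak w_{D^2\psi}(\tau_k)$ term in \eqref{e:rhok}. Collecting all error terms, and using Fubini in $(\omega,t)$ to move between expectations and integrals against $\overline\Lambda^N_t$, gives the estimates \eqref{e:rhok} and \eqref{e:rhok-strong}. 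Measurability of the conditional expectations is routine because the state space is finite once $\bm\sigma$ is fixed.
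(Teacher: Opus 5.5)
Your proposal is correct and follows essentially the same route as the paper. The paper also uses the chain rule along the affine interpolant and a Taylor expansion of $D_\lambda\psi$ around $\lambda^N_{t^k_{m-1}}$. It then computes the conditional first and second moments of the increment exactly from the transition probabilities, replaces $F_\lambda/\overline F_\lambda$ by $1$ at cost $O(w)$, and freezes and unfreezes time in $D^2_\lambda\psi$ at cost $\mathfrak{w}_{D^2\psi}(\tau_k)$. The only cosmetic difference is the Hessian-oscillation remainder: you bound it pathwise via $\|\lambda^N_{t^k_m}-\lambda^N_{t^k_{m-1}}\|_{\rm BL}\le 2/N$ almost surely, taking $\mathfrak{w}_{D^2\psi}$ increasing and concave so that $\mathfrak{w}_{D^2\psi}(2/N)\le 2\,\mathfrak{w}_{D^2\psi}(1/N)$, whereas the paper passes through the conditional law and concavity and arrives at the same bound.
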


We divide the proof of Theorem~\ref{t:almost-equation} in several lemmas.

\begin{lemma}
    \label{l:gradient-test}
    Assume that $(\pi.1)$ holds. For every $k \in \mathbb{N} \setminus \{0\}$, every $m \in \{1, \ldots, k\}$, and every $\psi \in C^{1} ([0, T] \times \Pp(\Vv))$ we have that
    \begin{itemize}
        \item[(i)] if $w \in \left[0, \frac{1}{2(C_\pi + 1)} \right]$, then
   \begin{align}
    \label{e:gradient-test}
         \frac{1}{\tau_{k}} & \int_{t^{k}_{m-1}}^{t^{k}_{m}} \mathbb{E} \bigg(  D_{\lambda} \psi(t, \lambda^{N}_{t^{k}_{m-1}}  ) \left[ \lambda^{N}_{t^{k}_{m}}   - \lambda^{N}_{t^{k}_{m-1}}    \right] \bigg)  \di t 
         \\
         &
         =  \frac{w}{\tau_{k} N}  \int_{t^{k}_{m-1}}^{t^{k}_{m}} \int_{\Pp(\Vv)} D_{\lambda} \psi (t, \lambda) \left[ ( \pi_{\lambda} - \overline{\pi}_{\lambda}) \lambda \right] \, \di \overline{\Lambda}^{N}_{t} (\lambda) \, \di t + R^{m}_{1}(\psi, \tau_{k}, N, w)\,, \nonumber
    \end{align}
    where 
    \begin{align}
    \label{e:rest-1}
        | R^{m}_{1} ( \psi, \tau_{k}, N, w) | \leq  O \bigg(\frac{  w^{2} }{N} \bigg)  \| D_{\lambda} \psi\|_{L^{\infty} ([0, T]\times \Pp(\Vv))}\,;
    \end{align}
\item[(ii)] if \eqref{strong-sel-ass} holds, then
 \begin{align}
    \label{e:gradient-test-strong-sel}
         \frac{1}{\tau_{k}} & \int_{t^{k}_{m-1}}^{t^{k}_{m}} \mathbb{E} \bigg(  D_{\lambda} \psi(t, \lambda^{N}_{t^{k}_{m-1}}  ) \left[ \lambda^{N}_{t^{k}_{m}}   - \lambda^{N}_{t^{k}_{m-1}}    \right] \bigg)  \di t 
         \\
         &
         =  \frac{1}{\tau_{k} N}  \int_{t^{k}_{m-1}}^{t^{k}_{m}} \int_{\Pp(\Vv)} D_{\lambda} \psi (t, \lambda) \left[\frac{w( \pi_{\lambda}  - \overline{\pi}_{\lambda}) \lambda}{1 + w( \overline{\pi}_{\lambda} - 1) } \right] \, \di \overline{\Lambda}^{N}_{t} (\lambda) \, \di t \,.\nonumber
    \end{align}
\end{itemize}
\end{lemma}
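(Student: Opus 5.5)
The key observation is that the integrand is linear in the increment, so everything reduces to computing the conditional expectation of the one-step increment $\lambda^{N}_{t^{k}_{m}} - \lambda^{N}_{t^{k}_{m-1}}$ given $\bm{\sigma}^{N}(t^{k}_{m-1})$. Fix $t \in (t^{k}_{m-1}, t^{k}_{m})$ and condition on $\bm{\sigma}^{N}(t^{k}_{m-1}) = \bm{\sigma}$, writing $\lambda = \frac1N\sum_n \delta_{\sigma_n}$. Since $D_{\lambda}\psi(t,\lambda)$ is a continuous linear functional on $E_{\Pp(\Vv)}$ and is $\bm{\sigma}$-measurable, the tower property gives
\begin{align*}
\mathbb{E}\Big( D_{\lambda}\psi(t,\lambda^{N}_{t^{k}_{m-1}})\big[\lambda^{N}_{t^{k}_{m}} - \lambda^{N}_{t^{k}_{m-1}}\big]\Big) = \mathbb{E}\Big( D_{\lambda}\psi(t,\lambda^{N}_{t^{k}_{m-1}})\Big[\mathbb{E}\big(\lambda^{N}_{t^{k}_{m}} - \lambda^{N}_{t^{k}_{m-1}} \,\big|\, \bm{\sigma}^{N}(t^{k}_{m-1})\big)\Big]\Big).
\end{align*}
Here the conditional expectation is a finite convex combination of elements of $E_{\Pp(\Vv)}$, so no Bochner subtleties arise.

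Using \eqref{def-tran-prob}, the increment equals $\frac1N(\delta_{\sigma_{n'}}-\delta_{\sigma_n})$ with probability $\frac{1}{N^2}F_\lambda(\sigma_{n'})/\overline{F}_\lambda$. Summing over $n,n'$ gives
\begin{align*}
\mathbb{E}(\cdot\,|\,\bm{\sigma}) = \frac1N\Big(\frac{F_\lambda \lambda}{\overline{F}_\lambda} - \lambda\Big) = \frac1N\,\frac{(F_\lambda - \overline{F}_\lambda)\lambda}{\overline{F}_\lambda} = \frac1N\,\frac{w(\pi_\lambda-\overline{\pi}_\lambda)\lambda}{1+w(\overline{\pi}_\lambda-1)},
\end{align*}
where we use $F_\lambda-\overline{F}_\lambda = w(\pi_\lambda-\overline{\pi}_\lambda)$ and $\overline{F}_\lambda = 1+w(\overline{\pi}_\lambda-1)$. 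Rewriting the outer expectation as an integral against $\overline{\Lambda}^{N}_{t} = (\lambda^{N}_{t^{k}_{m-1}})_\#\mathbb{P}$ and integrating in $t$ with the factor $\frac1{\tau_k}$ yields (ii) exactly. Positivity of $\overline{F}_\lambda$ is guaranteed by Remark~\ref{strong-sel}.

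For (i), the same identity holds, and we expand the denominator:
\begin{align*}
\frac{1}{1+w(\overline{\pi}_\lambda-1)} = 1 - \frac{w(\overline{\pi}_\lambda-1)}{1+w(\overline{\pi}_\lambda-1)}.
\end{align*}
The remainder $R^m_1$ is then
\begin{align*}
-\frac{1}{\tau_k N}\int_{t^{k}_{m-1}}^{t^{k}_{m}}\int D_\lambda\psi(t,\lambda)\Big[\frac{w^2(\overline{\pi}_\lambda-1)(\pi_\lambda-\overline{\pi}_\lambda)\lambda}{1+w(\overline{\pi}_\lambda-1)}\Big]\,\di\overline{\Lambda}^N_t\,\di t.
\end{align*}
To bound it, note that $w\le \frac{1}{2(C_\pi+1)}$ gives $1+w(\overline{\pi}_\lambda-1)\ge \frac12$. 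Also, $\|(\pi_\lambda-\overline{\pi}_\lambda)\lambda\|_{\rm BL}\le 2C_\pi$ by \eqref{e:pi-bdd}, since testing against $\|\varphi\|_{\rm Lip}\le1$ gives $|\int\varphi(\pi_\lambda-\overline{\pi}_\lambda)\di\lambda|\le 2C_\pi$. Combining these with $|\overline{\pi}_\lambda - 1|\le C_\pi+1$ and $\|D\psi\|_\infty$ bounds the integrand by $4C_\pi(C_\pi+1)w^2\|D_\lambda\psi\|_\infty$. The time average of length $\tau_k$ cancels the $\frac1{\tau_k}$, giving $O(w^2/N)\|D_\lambda\psi\|_{L^\infty}$.

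The only mildly delicate points are justifying that the conditional expectation commutes with the linear functional and identifying the resulting object as an element of $E_{\Pp(\Vv)}$; both are straightforward because the increment takes finitely many values given $\bm{\sigma}$.
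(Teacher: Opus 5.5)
Your proposal is correct and follows essentially the same route as the paper's proof. You condition on $\bm{\sigma}^{N}(t^{k}_{m-1})$, pull the conditional expectation inside the linear functional $D_{\lambda}\psi(t,\lambda)$, compute the expected one-step increment as $\frac{1}{N\overline{F}_{\lambda}}(F_{\lambda}-\overline{F}_{\lambda})\lambda = \frac{w(\pi_{\lambda}-\overline{\pi}_{\lambda})\lambda}{N(1+w(\overline{\pi}_{\lambda}-1))}$, and push forward to $\overline{\Lambda}^{N}_{t}$, which gives (ii) directly. For (i) you use the same splitting of $\frac{1}{1+w(\overline{\pi}_{\lambda}-1)}$ and the same kind of bounds ($1+w(\overline{\pi}_{\lambda}-1)\geq\frac12$, $|\overline{\pi}_{\lambda}-1|\leq C_{\pi}+1$, $\|(\pi_{\lambda}-\overline{\pi}_{\lambda})\lambda\|_{\mathrm{BL}}\leq 2C_{\pi}$) to get the $O(w^{2}/N)\|D_{\lambda}\psi\|_{L^{\infty}}$ remainder; your constant differs only inessentially from the paper's.
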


\begin{proof}
Let $\psi$, $N$, $k$, and $m$ be as in the statement of the lemma. In the following, given ${\bm{\sigma}} = (\sigma_1, \ldots, \sigma_N) \in \Vv^N$, we set $\lambda^{\bm{\sigma}} := \frac{1}{N} \sum_{n=1}^N \delta_{\sigma_n}$. By the law of total probability it holds that
\begin{align}
\label{e:103}
& \mathbb{E} \left(  D_{\lambda} \psi(t, \lambda^{N}_{t^{k}_{m-1}}  ) \left[ \lambda^{N}_{t^{k}_{m}}  - \lambda^{N}_{t^{k}_{m-1}}    \right] \right) \nonumber
\\ & \quad = \int_{\Vv^N} \mathbb{E} \left(  D_{\lambda} \psi(t, \lambda^{\bm\sigma}  ) \left[ \lambda^{N}_{t^{k}_{m}}  - \lambda^{\bm\sigma}  \right] \bigg | \,  {\bm{\sigma}}^{N}(t^{k}_{m-1}) = {\bm{\sigma}} \right)\, \di {\bm{\sigma}}^{N}(t^{k}_{m-1})_\# \mathbb{P} ({\bm{\sigma}})
\\
& \quad
= \int_{\Vv^N}  D_{\lambda} \psi(t, \lambda^{\bm{\sigma}}  ) \mathbb{E} \left( \left[ \lambda^{N}_{t^{k}_{m}}  - \lambda^{\bm{\sigma}}  \right] \bigg | \,  {\bm{\sigma}}^{N}(t^{k}_{m-1}) = {\bm{\sigma}} \right)\, \di {\bm{\sigma}}^{N}(t^{k}_{m-1})_\# \mathbb{P} ({\bm{\sigma}}) \,, \nonumber
\end{align}
where in the second equality we have used the linearity of the mean. By construction of $\lambda^{N}_{t^{k}_{m}}$ we have that
\begin{align}
\label{e:104}
   \mathbb{E}  \left( \left[ \lambda^{N}_{t^{k}_{m}}  - \lambda^{\bm{\sigma}}  \right] \bigg | \, {\bm{\sigma}}^{N}(t^{k}_{m-1}) = {\bm{\sigma}}  \right) &  = \frac{1}{N} \sum_{n=1}^{N}  \mathbb{E} \left( \left[ \delta_{\sigma^{N}_{n} (t^{k}_{m})} - \delta_{\sigma_{n} }  \right] \Big| \, {\bm{\sigma}}^{N}(t^{k}_{m-1}) = {\bm{\sigma}}  \right) 
   \\ \nonumber
   &
   = \frac{1}{N} \sum_{n, n'=1}^{N}   \left( \delta_{\sigma_{n'}} - \delta_{\sigma_{n}} \right) \mathbb{P} \left(\sigma^{N}_{n} (t^{k}_{m}) = \sigma_{n'} \Big| \, {\bm{\sigma}}^{N}(t^{k}_{m-1}) = {\bm{\sigma}} \right) \nonumber\, \\ \nonumber
   &  =  \frac{1}{N} \sum_{n, n'=1}^{N}  \left( \delta_{\sigma_{n'}} - \delta_{\sigma_{n}} \right) \frac{ \frac{1}{N^{2}} F_{\lambda^{\bm{\sigma}}} ( \sigma_{n'}) } { \overline{F}_{\lambda^{\bm{\sigma}}}}
        \\ \nonumber
        &
        =  \frac{1}{N \, \overline{F}_{\lambda^{\bm{\sigma}}}} \sum_{n, n' =1}^{N} \left( \delta_{\sigma_{n'}} - \delta_{\sigma_{n}} \right)  \frac{1}{N^{2}} F_{\lambda^{\bm{\sigma}}} (\sigma_{n'}) \nonumber
         \\ \nonumber
         &
       = \frac{1}{N \, \overline{F}_{\lambda^{\bm{\sigma}}}} \sum_{n'=1}^{N} \frac{1}{N}\big(  F_{\lambda^{\bm{\sigma}}} (\sigma_{n'}) - \overline{F}_{\lambda^{\bm{\sigma}}} \big) \delta_{\sigma_{n'}} \\ \nonumber
       & = \frac{1}{N \, \overline{F}_{\lambda^{\bm{\sigma}}}} \big( F_{\lambda^{\bm{\sigma}}} (\cdot) - \overline{F}_{\lambda^{\bm{\sigma}}}) \lambda^{\bm{\sigma}} \nonumber\,.
\end{align}
 Using the expansion~\eqref{e:Flambda} for the fitness $F_{\lambda^{\bm{\sigma}}}$, we get that $F_{\lambda^{\bm{\sigma}}} (\cdot) - \overline{F}_{\lambda^{\bm{\sigma}}} = w(\pi_{\lambda^{\bm{\sigma}}} (\cdot) - \overline{\pi}_{\lambda^{\bm{\sigma}}})$. Then,~\eqref{e:104} becomes
   \begin{align}
   \label{e:105}
   & \mathbb{E}  \left( \left[ \lambda^{N}_{t^{k}_{m}}  - \lambda^{\bm{\sigma}}  \right] \bigg | \, {\bm{\sigma}}^{N}(t^{k}_{m-1}) = {\bm{\sigma}}  \right) = \frac{w}{N ( 1 + w( \overline{\pi}_{\lambda^{\bm{\sigma}}} - 1) ) } \big( ( \pi_{\lambda^{\bm{\sigma}}} (\cdot) - \overline{\pi}_{\lambda^{\bm{\sigma}}}) \lambda^{\bm{\sigma}} \big)
   \end{align}
In particular, we notice that $( \pi_{\lambda^{\bm{\sigma}}} (\cdot) - \overline{\pi}_{\lambda^{\bm{\sigma}}}) \lambda^{\bm{\sigma}} \in E_{\Pp(\Vv)}$ since it is a measure with $0$-mean. 

Combining ~\eqref{e:103} and~\eqref{e:105} and using that $\lambda^{{\bm{\sigma}}^{N}(t^{k}_{m-1})} = \lambda^{N}_{t^{k}_{m-1}}$, we infer that
\begin{align} \label{eq:2609091859}
    & \mathbb{E} \left(  D_{\lambda} \psi(t, \lambda^{N}_{t^{k}_{m-1}}  ) \left[ \lambda^{N}_{t^{k}_{m}}  - \lambda^{N}_{t^{k}_{m-1}}    \right] \right) \\ \nonumber
    & \quad = \int_{\Vv^N}  D_{\lambda} \psi(t, \lambda^{\bm{\sigma}}  ) \frac{w}{N ( 1 + w( \overline{\pi}_{\lambda^{\bm{\sigma}}} - 1) ) } \big( ( \pi_{\lambda^{\bm{\sigma}}} (\cdot) - \overline{\pi}_{\lambda^{\bm{\sigma}}}) \lambda^{\bm{\sigma}} \big) \, \di {\bm{\sigma}}^{N}(t^{k}_{m-1})_\# \mathbb{P} ({\bm{\sigma}}) \\ \nonumber
    & \quad = \int_{\Pp(\Vv)}  D_{\lambda} \psi(t, \lambda) \frac{w}{N ( 1 + w( \overline{\pi}_{\lambda} - 1) ) } \big( ( \pi_{\lambda} (\cdot) - \overline{\pi}_{\lambda}) \lambda \big) \, \di (\lambda^{N}_{t^{k}_{m-1}})_\# \mathbb{P} (\lambda) \\ \nonumber
    & \quad = \int_{\Pp(\Vv)}  D_{\lambda} \psi(t, \lambda) \frac{w}{N ( 1 + w( \overline{\pi}_{\lambda} - 1) ) } \big( ( \pi_{\lambda} (\cdot) - \overline{\pi}_{\lambda}) \lambda \big) \, \di \Lambda^N_{t^{k}_{m-1}} (\lambda)\,.
\end{align}
Then, we observe that 
\begin{align*}
\frac{w}{N ( 1 + w( \overline{\pi}_{\lambda} - 1) ) } \big( ( \pi_{\lambda} (\cdot) - \overline{\pi}_{\lambda}) \lambda \big) = \frac{w}{N}  \big( ( \pi_{\lambda} (\cdot) - \overline{\pi}_{\lambda}) \lambda \big) - \frac{w^{2} ( \overline{\pi}_{\lambda} - 1)}{N( 1 + w( \overline{\pi}_{\lambda} - 1) )}  \big( ( \pi_{\lambda} (\cdot) - \overline{\pi}_{\lambda}) \lambda \big)\,. \nonumber
   \end{align*}
Now, if \eqref{strong-sel-ass} holds, by~\eqref{eq:2609091859} we obtain \eqref{e:gradient-test-strong-sel}. Otherwise, if $w\in \left[0, \frac{1}{2(C_\pi+1)}\right]$ it follows from \eqref{eq:2609091859} and \eqref{e:105} that
\begin{align}
\label{e:150}
\frac{1}{\tau_{k}} \int_{t^{k}_{m-1}}^{t^{k}_{m} } & \mathbb{E} \left(  D_{\lambda} \psi(t, \lambda^{N}_{t^{k}_{m-1}}  ) \left[ \lambda^{N}_{t^{k}_{m}}  - \lambda^{N}_{t^{k}_{m-1}}    \right] \right) \, \di t   =
   \\
   &
   = \frac{w}{\tau_{k} N} \int_{t^{k}_{m-1}}^{t^{k}_{m}} \int_{\Pp(\Vv)} D_{\lambda} \psi(t, \lambda) \left[ ( \pi_{\lambda} (\cdot) - \overline{\pi}_{\lambda}) \lambda \right] \, \di \overline{\Lambda}^{N}_{t} (\lambda) \,\di t \nonumber
   \\
   &
   \qquad - \frac{w^{2}}{\tau_{k} N}  \int_{t^{k}_{m-1}}^{t^{k}_{m}} \int_{\Pp(\Vv)} \frac{\overline{\pi}_{\lambda} - 1}{1 + w( \overline{\pi}_{\lambda} - 1) } D_{\lambda} \psi(t, \lambda) \left[  ( \pi_{\lambda} (\cdot) - \overline{\pi}_{\lambda}) \lambda   \right]\, \di \overline{\Lambda}^{N}_{t} (\lambda) \,\di t \,. \nonumber
\end{align}
  We set
  \begin{align*}
       R^{m}_{1} ( \psi, \tau_{k}, N, w) := - \frac{w^{2}}{\tau_{k} N}  \int_{t^{k}_{m-1}}^{t^{k}_{m}} \int_{\Pp(\Vv)} \frac{\overline{\pi}_{\lambda} - 1}{1 + w( \overline{\pi}_{\lambda} - 1)} D_{\lambda} \psi(t, \lambda) \left[  ( \pi_{\lambda} (\cdot) - \overline{\pi}_{\lambda}) \lambda   \right]\, \di \overline{\Lambda}^{N}_{t} (\lambda) \,\di t\,.
  \end{align*}
  By assumption~$(\pi.1)$, by~\eqref{e:pi-bdd} and since $w\in \left[0,\frac{1}{2(C_\pi+1)}\right]$, we have that for every $\lambda \in \Pp(\Vv)$
\begin{align*}
    \left| \frac{\overline{\pi}_{\lambda} - 1}{1 + w( \overline{\pi}_{\lambda} - 1) }  D_{\lambda} \psi(t, \lambda)  \big[ ( \pi_{\lambda} (\cdot) - \overline{\pi}_{\lambda}) \lambda \big] \right| & \leq \| D_{\lambda}\psi\|_{L^{\infty} ([0, T] \times \Pp(\Vv))} \left| \frac{\overline{\pi}_{\lambda} - 1}{1 + w( \overline{\pi}_{\lambda} - 1) }\right| \, \| (\pi_{\lambda} - \overline{\pi}_{\lambda}) \lambda\|_{\rm BL} 
    \\
    &
    \leq 2C_{\pi} ( 1 + C_{\pi}) \| D_{\lambda}\psi\|_{L^{\infty} ([0, T] \times \Pp(\Vv))}\,.
\end{align*}
Hence~\eqref{e:gradient-test} and~\eqref{e:rest-1} hold. This concludes the proof of the lemma.
\end{proof}

In what follows, for $\varphi \in C([0, T]\times \Pp(\Vv))$ we denote by $\mathfrak{w}_{\varphi}$ the modulus of (uniform) continuity of~$\varphi$ in $[0, T] \times \Pp(\Vv)$. 

\begin{lemma}
    \label{l:second-gradient-test}
     Assume that $(\pi.1)$ holds. For every $k \in \mathbb{N} \setminus \{0\}$, every $m \in \{1, \ldots, k\}$, and every $\psi \in C^{2} ([0, T] \times \Pp(\Vv))$ we have that
     \begin{itemize}
         \item[(i)] if $w\in\left[0,\frac{1}{2(C_\pi+1)}\right]$, then
    \begin{align}
     \label{e:second-gradient-test}
        & \frac{1}{\tau_{k}} \int_{t^{k}_{m-1}}^{t^{k}_{m}} \int_{0}^{1} \mathbb{E} \bigg( D^{2}_{\lambda} \psi \left( t, \lambda^{N}_{t^{k}_{m-1}} + r (\lambda^{N}_{t} - \lambda^{N}_{t^{k}_{m-1}}) \right)  \left[ \lambda^{N}_{t}   - \lambda^{N}_{t^{k}_{m-1}}  , \lambda^{N}_{t^{k}_{m}}   - \lambda^{N}_{t^{k}_{m-1}}    \right]\bigg) \, \di r \, \di t 
         \\
         &
         \, = \frac{1}{2 \tau_{k} N^{2}} \int_{t^{k}_{m-1}}^{t^{k}_{m}} \int_{\Pp(\Vv)} \int_{\Vv} \int_{\Vv} D^{2}_{\lambda} \psi (t, \lambda) [\delta_{\sigma} - \delta_{\overline{\sigma}} \,, \delta_{\sigma} - \delta_{\overline{\sigma}} ]  \, \di \lambda(\sigma) \, \di \lambda(\overline{\sigma}) \, \di \overline{\Lambda}^{N}_{t} (\lambda) \, \di t + R^{m}_{2} (\psi, \tau_{k}, N, w), \nonumber
     \end{align}
    where
    \begin{align}
        \label{e:rest-2}
        | R^{m}_{2} (\psi, \tau_{k}, N, w) | \leq & \ O\bigg(\frac{w}{N^{2}} \bigg) \| D^{2}\psi\|_{L^{\infty} ([0, T] \times \Pp(\Vv))} 
        \\
        &
        \nonumber + O \bigg(\frac{1}{N^{2}} \bigg) \bigg( \mathfrak{w}_{D^{2}\psi} (\tau_{k}) +\mathfrak{w}_{D^{2}\psi} \bigg(\frac{1}{N} \bigg) \bigg) \,;
    \end{align}
\item[(ii)] if \eqref{strong-sel-ass} holds, then
\begin{align}
     \label{e:second-gradient-test-strong}
        & \bigg|\frac{1}{\tau_{k}} \int_{t^{k}_{m-1}}^{t^{k}_{m}} \int_{0}^{1} \mathbb{E} \bigg( D^{2}_{\lambda} \psi \left( t, \lambda^{N}_{t^{k}_{m-1}} + r (\lambda^{N}_{t} - \lambda^{N}_{t^{k}_{m-1}}) \right)  \left[ \lambda^{N}_{t}   - \lambda^{N}_{t^{k}_{m-1}}  , \lambda^{N}_{t^{k}_{m}}   - \lambda^{N}_{t^{k}_{m-1}}    \right]\bigg) \, \di r \, \di t \bigg| 
        \\
        &
         \, \leq \ O\bigg(\frac{1}{N^{2}} \bigg) \| D^{2}\psi\|_{L^{\infty} ([0, T] \times \Pp(\Vv))}. \nonumber
\end{align}
\end{itemize}
\end{lemma}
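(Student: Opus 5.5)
We exploit the structure of the interpolation: for $t\in(t^k_{m-1},t^k_m)$ we have $\lambda^N_t-\lambda^N_{t^k_{m-1}}=\frac{t-t^k_{m-1}}{\tau_k}\,(\lambda^N_{t^k_m}-\lambda^N_{t^k_{m-1}})$. Hence the integrand in \eqref{e:second-gradient-test} equals $\frac{t-t^k_{m-1}}{\tau_k}$ times $D^2_\lambda\psi(t,\xi_{r,t})[\Delta,\Delta]$, where $\Delta:=\lambda^N_{t^k_m}-\lambda^N_{t^k_{m-1}}$ and $\xi_{r,t}$ is the intermediate point. By the proof of Lemma~\ref{l:distance}, conditionally on $\bm\sigma^N(t^k_{m-1})=\bm\sigma$ the increment is $\Delta=\frac1N(\delta_{\sigma_{n'}}-\delta_{\sigma_n})$ with probability $\frac{1}{N^2}F_{\lambda^{\bm\sigma}}(\sigma_{n'})/\overline F_{\lambda^{\bm\sigma}}$. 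In particular $\|\Delta\|_{\rm BL}\le 2/N$ and $\|\xi_{r,t}-\lambda^N_{t^k_{m-1}}\|_{\rm BL}\le 2/N$.

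\textbf{Step 1 (freezing the base point).} We replace $D^2_\lambda\psi(t,\xi_{r,t})$ by $D^2_\lambda\psi(t,\lambda^N_{t^k_{m-1}})$. The error is bounded by $\mathfrak w_{D^2\psi}(2/N)\cdot 4/N^2$, which is of the form $O(1/N^2)\mathfrak w_{D^2\psi}(1/N)$ after adjusting the modulus by a constant factor. If the time argument must also be frozen at $t^k_{m-1}$, this produces the $\mathfrak w_{D^2\psi}(\tau_k)$ term, and we unfreeze back to $t$ at the end at the same cost. Integrating $\frac{t-t^k_{m-1}}{\tau_k}$ in $t$ over the interval gives the factor $\frac12$; this is the source of the $\frac{1}{2}$ in the statement.

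\textbf{Step 2 (conditional expectation).} By the law of total probability, as in \eqref{e:103}, it suffices to compute for fixed $\bm\sigma$ and $\lambda=\lambda^{\bm\sigma}$:
\[
\sum_{n,n'}\frac{1}{N^2}\frac{F_\lambda(\sigma_{n'})}{\overline F_\lambda}\frac{1}{N^2}D^2_\lambda\psi(t,\lambda)[\delta_{\sigma_{n'}}-\delta_{\sigma_n},\delta_{\sigma_{n'}}-\delta_{\sigma_n}]
=\frac{1}{N^2}\int_\Vv\int_\Vv\frac{F_\lambda(\sigma)}{\overline F_\lambda}D^2_\lambda\psi(t,\lambda)[\delta_\sigma-\delta_{\overline\sigma},\delta_\sigma-\delta_{\overline\sigma}]\,\di\lambda(\sigma)\,\di\lambda(\overline\sigma).
\]
In case (i) we write $\frac{F_\lambda(\sigma)}{\overline F_\lambda}=1+\frac{w(\pi_\lambda(\sigma)-\overline\pi_\lambda)}{1+w(\overline\pi_\lambda-1)}$. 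For $w\le\frac{1}{2(C_\pi+1)}$ the denominator is at least $1/2$, so the correction is at most $4C_\pi w$. Since $|D^2\psi[\delta_\sigma-\delta_{\overline\sigma},\cdot]|\le 4\|D^2\psi\|_\infty$, the correction contributes $O(w/N^2)\|D^2\psi\|_\infty$. The leading term, integrated against $\overline\Lambda^N_t$ with the factor $\frac{1}{\tau_k}\int\frac{t-t^k_{m-1}}{\tau_k}\,\di t=\frac12$, is exactly the main term in \eqref{e:second-gradient-test}.

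\textbf{Step 3 (strong selection).} In case (ii) no expansion is needed. Since the transition probabilities sum to one and $\|\Delta\|_{\rm BL}\le 2/N$ almost surely, we get directly $|D^2\psi[\cdot,\cdot]\,\Delta\otimes\Delta|\le 4\|D^2\psi\|_\infty/N^2$, which yields \eqref{e:second-gradient-test-strong}.

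The main point of care is Step 1. The continuity of $D^2\psi$ is only uniform in the operator norm, so we must use that $D^2_\lambda\psi(t,\cdot)$ is continuous into $\mathcal L(E_{\Pp(\Vv)};(E_{\Pp(\Vv)})^*)$ on the compact set $[0,T]\times\Pp(\Vv)$, which gives a uniform modulus. We also need measurability of the conditional decomposition, which follows as in Lemma~\ref{l:gradient-test}.
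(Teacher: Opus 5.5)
Your proposal is correct and follows essentially the same route as the paper. The shared steps are: the interpolation identity; freezing the base point at $\lambda^N_{t^k_{m-1}}$ at cost $O(1/N^2)\,\mathfrak{w}_{D^2\psi}(2/N)$; the conditional single-swap computation with the expansion $F_\lambda/\overline{F}_\lambda = 1 + w(\pi_\lambda-\overline{\pi}_\lambda)/(1+w(\overline{\pi}_\lambda-1))$, which gives the $O(w/N^2)$ remainder; and freezing and unfreezing time at $t^k_{m-1}$ to integrate the linear weight into the factor $\frac12$. That last step is necessary, not optional, since $D^2_\lambda\psi$ depends on $t$.

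Two of your estimates are slightly more direct than the paper's. You bound the base-point error pointwise almost surely rather than via the paper's Jensen-type estimate. In case (ii) you bound directly by $\|D^2\psi\|_\infty\|\Delta\|_{\rm BL}^2\le 4\|D^2\psi\|_\infty/N^2$, which avoids both the modulus of continuity and any bound on $F_\lambda/\overline{F}_\lambda$.
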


\begin{proof}
     Let $\psi$, $N$, $k$, and $m$ be as in the statement of the lemma. By construction of $\lambda^{N}_{t}$, for $t \in (t^{k}_{m-1}, t^{k}_{m})$ and $r \in [0, 1]$ we have that
  \begin{align}
        \label{e:107}
       \mathbb{E}&  \bigg( D^{2}_{\lambda} \psi \left( t, \lambda^{N}_{t^{k}_{m-1}} + r (\lambda^{N}_{t} - \lambda^{N}_{t^{k}_{m-1}}) \right)  \left[ \lambda^{N}_{t}   - \lambda^{N}_{t^{k}_{m-1}}  , \lambda^{N}_{t^{k}_{m}}   - \lambda^{N}_{t^{k}_{m-1}}    \right]\bigg)
       \\
       &
       = \frac{t - t^{k}_{m-1}}{\tau_{k}} \, \mathbb{E} \bigg( D^{2}_{\lambda} \psi \left(t, \lambda^{N}_{t^{k}_{m-1}} + r (\lambda^{N}_{t} - \lambda^{N}_{t^{k}_{m-1}} ) \right)  \left[ \lambda^{N}_{t^{k}_{m}} - \lambda^{N}_{t^{k}_{m-1}} ,  \lambda^{N}_{t^{k}_{m}} - \lambda^{N}_{t^{k}_{m-1}} \right] \bigg)\nonumber
       \\
       &
        =  \frac{t - t^{k}_{m-1}}{\tau_{k}} \,  \mathbb{E} \bigg( D^{2}_{\lambda} \psi \big(t, \lambda^{N}_{t^{k}_{m-1}}\big)   \left[ \lambda^{N}_{t^{k}_{m}} - \lambda^{N}_{t^{k}_{m-1}} ,  \lambda^{N}_{t^{k}_{m}} - \lambda^{N}_{t^{k}_{m-1}} \right] \bigg)  \nonumber
        \\
        &
        \quad + \frac{t - t^{k}_{m-1}}{\tau_{k}} \, \mathbb{E} \bigg( \left( D^{2}_{\lambda} \psi \left( t, \lambda^{N}_{t^{k}_{m-1}} + r (\lambda^{N}_{t} - \lambda^{N}_{t^{k}_{m-1}}) \right) -  D^{2}_{\lambda} \psi \big(t, \lambda^{N}_{t^{k}_{m-1}} \big) \right)   \left[ \lambda^{N}_{t^{k}_{m}} - \lambda^{N}_{t^{k}_{m-1}} ,  \lambda^{N}_{t^{k}_{m}} - \lambda^{N}_{t^{k}_{m-1}} \right] \bigg) \,.\nonumber
   \end{align}  
   We now estimate the two terms on the right-hand side of~\eqref{e:107}. As in the proof of Lemma~\ref{l:gradient-test}, given ${\bm{\sigma}} = (\sigma_1, \ldots, \sigma_n) \in \Vv^N$, we set $\lambda^{\bm{\sigma}} := \frac{1}{N} \sum_{i=1}^N \delta_{\sigma_n}$. By the law of total probability we rewrite the first term on the right-hand side of~\eqref{e:107} as
   \begin{align}
   \label{e:151}
        &\frac{t - t^{k}_{m-1}}{\tau_{k}} \,  \mathbb{E} \bigg( D^{2}_{\lambda} \psi \big(t, \lambda^{N}_{t^{k}_{m-1}}\big)   \left[ \lambda^{N}_{t^{k}_{m}} - \lambda^{N}_{t^{k}_{m-1}} ,  \lambda^{N}_{t^{k}_{m}} - \lambda^{N}_{t^{k}_{m-1}} \right] \bigg) 
        \\
        &
        \nonumber \quad =  \frac{t - t^{k}_{m-1}}{\tau_{k}} \int_{\Vv^N} \mathbb{E}  \bigg( D^{2}_{\lambda} \psi( t, \lambda^{\bm{\sigma}})  \left[ \lambda^{N}_{t^{k}_{m}}   - \lambda^{\bm{\sigma}} \  , \ \lambda^{N}_{t^{k}_{m}}   - \lambda^{\bm{\sigma}}    \right] \ \bigg|\ {\bm{\sigma}}^{N}(t^{k}_{m-1}) = {\bm{\sigma}} \bigg) \, \di {\bm{\sigma}}^{N}(t^{k}_{m-1})_\# \mathbb{P} ({\bm{\sigma}}) \,. \nonumber
   \end{align}
   We compute 
   \begin{align*}
       & \mathbb{E}  \bigg( D^{2}_{\lambda} \psi( t, \lambda^{\bm{\sigma}})  \left[ \lambda^{N}_{t^{k}_{m}}   - \lambda^{\bm{\sigma}} \  , \ \lambda^{N}_{t^{k}_{m}}   - \lambda^{\bm{\sigma}}    \right] \ \bigg|\ {\bm{\sigma}}^{N}(t^{k}_{m-1}) = {\bm{\sigma}} \bigg) \\ 
       & \quad = \frac{1}{N^2} \sum_{n=1}^{N}\sum_{\ell=1}^{N}  \mathbb{E} \left(  D^{2}_{\lambda} \psi (t, \lambda^{\bm{\sigma}})  \left[ \delta_{\sigma_{n}^{N} (t^{k}_{m})}  - \delta_{\sigma_{n}} \ ,  \ \delta_{\sigma_{\ell}^{N} (t^{k}_{m})}  - \delta_{\sigma_{\ell}} \right] \ \Big| \ {\bm{\sigma}}^{N}(t^{k}_{m-1}) = {\bm{\sigma}} \right) 
    \end{align*}
    and we observe that, since only one agent may replace its strategy, if the event $\sigma^{N}_n(t^{k}_{m}) = \sigma_{n'}$ occurs, then $\delta_{\sigma_{\ell}^{N} (t^{k}_{m})}  - \delta_{\sigma_{\ell}} \neq 0$ only if $\ell = n$. From this and from the fact that $\lambda^{\bm{\sigma}} = \frac{1}{N} \sum_{n=1}^N \delta_{\sigma_n}$, it follows that
    \begin{align*}
       & \mathbb{E}  \bigg( D^{2}_{\lambda} \psi( t, \lambda^{\bm{\sigma}})  \left[ \lambda^{N}_{t^{k}_{m}}   - \lambda^{\bm{\sigma}} \  , \ \lambda^{N}_{t^{k}_{m}}   - \lambda^{\bm{\sigma}}    \right] \ \bigg|\ {\bm{\sigma}}^{N}(t^{k}_{m-1}) = {\bm{\sigma}} \bigg) \\ 
       & \quad = \frac{1}{N^2} \sum_{n=1}^{N} \mathbb{E} \left(  D^{2}_{\lambda} \psi (t, \lambda^{\bm{\sigma}})  \left[ \delta_{\sigma_{n}^{N} (t^{k}_{m})}  - \delta_{\sigma_{n}} \ ,  \ \delta_{\sigma_{n}^{N} (t^{k}_{m})}  - \delta_{\sigma_{n}} \right] \ \Big| \ {\bm{\sigma}}^{N}(t^{k}_{m-1}) = {\bm{\sigma}} \right) \\ 
       & \quad = \frac{1}{N^2} \sum_{n=1}^{N} \sum_{n'=1}^{N}   D^{2}_{\lambda} \psi (t, \lambda^{\bm{\sigma}})  \left[ \delta_{\sigma_{n'}} - \delta_{\sigma_{n}} \ ,  \ \delta_{\sigma_{n'}}  - \delta_{\sigma_{n}} \right] \mathbb{P} \Big( \sigma^{N}_n(t^{k}_{m}) = \sigma_{n'} \ \Big| \ {\bm{\sigma}}^{N}(t^{k}_{m-1}) = {\bm{\sigma}} \Big) \\ 
       & \quad = \frac{1}{N^2} \sum_{n=1}^{N} \sum_{n'=1}^{N}   D^{2}_{\lambda} \psi (t, \lambda^{\bm{\sigma}})  \left[ \delta_{\sigma_{n'}} - \delta_{\sigma_{n}} \ ,  \ \delta_{\sigma_{n'}}  - \delta_{\sigma_{n}} \right] \frac{\frac{1}{N^{2}} F_{\lambda^{\bm{\sigma}}} (\sigma_{n'}) }{ \overline{F}_{\lambda^{\bm{\sigma}}}} \\ 
       & \quad = \frac{1}{N^2} \int_{\Vv} \int_{\Vv}  D^{2}_{\lambda} \psi (t, \lambda^{\bm{\sigma}})  \left[ \delta_{\sigma'} - \delta_{\sigma''} \ ,  \ \delta_{\sigma'}  - \delta_{\sigma''} \right] \frac{F_{\lambda^{\bm{\sigma}}}(\sigma')}{\overline{F}_{\lambda^{\bm{\sigma}}}} \, \di \lambda^{\bm{\sigma}}(\sigma') \di \lambda^{\bm{\sigma}}(\sigma'') \,. 
    \end{align*}
    Inserting this into~\eqref{e:151}, using the fact that $\lambda^{{\bm{\sigma}}^{N}(t^{k}_{m-1})} = \lambda^{N}_{t^{k}_{m-1}}$, and using the explicit expression~\eqref{e:Flambda} of $F_{\lambda}$ and $\overline{F}_{\lambda}$ we get that 
    \begin{align} \label{eq:2609092111}
        & \frac{t - t^{k}_{m-1}}{\tau_{k}} \,  \mathbb{E} \bigg( D^{2}_{\lambda} \psi \big(t, \lambda^{N}_{t^{k}_{m-1}}\big)   \left[ \lambda^{N}_{t^{k}_{m}} - \lambda^{N}_{t^{k}_{m-1}} ,  \lambda^{N}_{t^{k}_{m}} - \lambda^{N}_{t^{k}_{m-1}} \right] \bigg)
         \\ \nonumber
        & = \frac{t - t^{k}_{m-1}}{\tau_{k} N^2} \int_{\Vv^N} \int_{\Vv} \int_{\Vv}  D^{2}_{\lambda} \psi (t, \lambda^{\bm{\sigma}})  \left[ \delta_{\sigma'} - \delta_{\sigma''} \ ,  \ \delta_{\sigma'}  - \delta_{\sigma''} \right] \frac{ F_{\lambda^{\bm{\sigma}}}(\sigma')}{\overline{F}_{\lambda^{\bm{\sigma}}}}  \, \di \lambda^{\bm{\sigma}}(\sigma') \di \lambda^{\bm{\sigma}}(\sigma'') \, \di {\bm{\sigma}}^{N}(t^{k}_{m-1})_\# \mathbb{P} ({\bm{\sigma}}) \\ \nonumber
        & = \frac{t - t^{k}_{m-1}}{\tau_{k} N^2} \int_{\Pp(\Vv)} \int_{\Vv} \int_{\Vv}  D^{2}_{\lambda} \psi (t, \lambda)  \left[ \delta_{\sigma'} - \delta_{\sigma''} \ ,  \ \delta_{\sigma'}  - \delta_{\sigma''} \right] \frac{ F_{\lambda}(\sigma')}{\overline{F}_{\lambda}}  \, \di \lambda(\sigma') \di \lambda(\sigma'') \, \di (\lambda^{N}_{t^{k}_{m-1}})_\# \mathbb{P} (\lambda) \\ \nonumber
        & = \frac{t - t^{k}_{m-1}}{\tau_{k} N^2} \int_{\Pp(\Vv)} \frac{1}{\overline{F}_{\lambda}} \int_{\Vv} \int_{\Vv}  D^{2}_{\lambda} \psi (t, \lambda)  \left[ \delta_{\sigma'} - \delta_{\sigma''} \ ,  \ \delta_{\sigma'}  - \delta_{\sigma''} \right] \frac{ F_{\lambda}(\sigma')}{\overline{F}_{\lambda}}  \, \di \lambda(\sigma') \di \lambda(\sigma'') \, \di \overline{\Lambda}^{N}_{t}(\lambda) \\ \nonumber
        & = \frac{t - t^{k}_{m-1}}{\tau_{k} N^2} \int_{\Pp(\Vv)}  \int_{\Vv} \int_{\Vv}  D^{2}_{\lambda} \psi (t, \lambda)  \left[ \delta_{\sigma'} - \delta_{\sigma''} \ ,  \ \delta_{\sigma'}  - \delta_{\sigma''} \right]  \frac{( 1 + w ( \pi_{\lambda} (\sigma') - 1))}{ ( 1 + w( \overline{\pi}_{\lambda} - 1) )} \, \di \lambda(\sigma') \di \lambda(\sigma'') \, \di \overline{\Lambda}^{N}_{t}(\lambda) \\ \nonumber
        & = \frac{(t - t^{k}_{m-1})}{\tau_{k} N^{2}} \int_{\Pp(\Vv)} \int_{\Vv} \int_{\Vv} D^{2}_{\lambda} \psi (t, \lambda) [\delta_{\sigma'} - \delta_{\sigma''}\ , \  \delta_{\sigma'} - \delta_{\sigma''} ]  \, \di \lambda(\sigma') \, \di \lambda(\sigma'') \, \di \overline{\Lambda}^{N}_{t} (\lambda)
         \\ \nonumber
        & \quad + \frac{w ( t - t^{k}_{m-1}) }{\tau_{k} N^{2}} \int_{\Pp(\Vv)} \int_{\Vv} \int_{\Vv} D^{2}_{\lambda} \psi (t, \lambda) [\delta_{\sigma'} - \delta_{\sigma''} \ , \ \delta_{\sigma'} - \delta_{\sigma''} ]  \frac{(\pi_{\lambda} (\sigma') - \overline{\pi}_{\lambda})}{(1+w(\overline{\pi}_{\lambda}-1))} \, \di \lambda(\sigma') \, \di \lambda(\sigma'') \, \di \overline{\Lambda}^{N}_{t} (\lambda)
\\ \nonumber
         &
         = : \frac{( t - t^{k}_{m-1})}{\tau_{k} N^{2}} \int_{\Pp(\Vv)} \int_{\Vv} \int_{\Vv} D^{2}_{\lambda} \psi (t, \lambda) [\delta_{\sigma'} - \delta_{\sigma''}\ , \  \delta_{\sigma'} - \delta_{\sigma''} ]  \, \di \lambda(\sigma') \, \di \lambda(\sigma'')\, \di \overline{\Lambda}^{N}_{t} (\lambda) + R^{m}_{2, 1} ( t,  \psi, \tau_{k}, N, w)\,.
    \end{align}
    Setting
    \begin{align*}
        R^{m}_{2, 1} &  (\psi, \tau_{k}, N, w)  :=  \frac{1}{\tau_{k}}\int_{t^{k}_{m-1}}^{t^{k}_{m}}   R^{m}_{2, 1} ( t,  \psi, \tau_{k}, N, w) \, \di t 
    \end{align*}
    and combining~\eqref{e:151} and \eqref{eq:2609092111}, we write
    \begin{align}
        \label{e:154}
        \int_{t^{k}_{m-1}}^{t^{k}_{m}} &  \frac{t - t^{k}_{m-1}}{\tau^{2}_{k}} \, \mathbb{E}  \left( D^{2}_{\lambda} \psi( t, \lambda^{N}_{t^{k}_{m-1}})  \left[ \lambda^{N}_{t^{k}_{m}}   - \lambda^{N}_{t^{k}_{m-1}} \ , \ \lambda^{N}_{t^{k}_{m}}   - \lambda^{N}_{t^{k}_{m-1}}    \right]\right) \, \di t 
        \\
        &
        = \frac{1}{N^{2}} \int_{t^{k}_{m-1}}^{t^{k}_{m}} \int_{\Pp(\Vv)}  \int_{\Vv} \int_{\Vv} \frac{t - t^{k}_{m-1}}{\tau^{2}_{k}} \, D^{2}_{\lambda} \psi (t, \lambda) [\delta_{\sigma'} - \delta_{\sigma''}\ , \  \delta_{\sigma'} - \delta_{\sigma''} ]  \, \di \lambda(\sigma') \, \di \lambda(\sigma'')\, \di \overline{\Lambda}^{N}_{t} (\lambda) \, \di t \nonumber
        \\
        &
        \qquad \vphantom{\int_{\Om}^{\Om}} + R^{m}_{2, 1}   (\psi, \tau_{k}, N, w) \,. \nonumber
    \end{align}
    We notice that by regularity of $\psi$ and by $(\pi.1)$ (see also~\eqref{e:pi-bdd}), if $w \in \left[0,\frac{1}{2 ( C_{\pi} + 1) }\right]$ it holds
    \begin{align*}
       | R^{m}_{2, 1} (\psi, \tau_{k}, N, w) | \leq \frac{w\,   \| D^{2}_{\lambda} \psi\|_{L^{\infty} ([0, T]\times \Pp(\Vv))}}{N^{2}} \,   4(C_{\pi} + 1) \,. 
    \end{align*}
    Therefore, we infer that
    \begin{align}
        \label{e:rest-2.1}
        | R^{m}_{2, 1} (\psi, \tau_{k}, N, w) | \leq O \bigg(\frac{w}{N^{2}} \bigg) \| D^{2}_{\lambda} \psi\|_{L^{\infty} ([0, T]\times \Pp(\Vv))}\,.
    \end{align}
With a similar argument, if follows from \eqref{e:154} that, if \eqref{strong-sel-ass} holds, then 
 \begin{align}
        \label{e:154-strong}
        \bigg|\int_{t^{k}_{m-1}}^{t^{k}_{m}} &  \frac{t - t^{k}_{m-1}}{\tau^{2}_{k}} \, \mathbb{E}  \left( D^{2}_{\lambda} \psi( t, \lambda^{N}_{t^{k}_{m-1}})  \left[ \lambda^{N}_{t^{k}_{m}}   - \lambda^{N}_{t^{k}_{m-1}} \ , \ \lambda^{N}_{t^{k}_{m}}   - \lambda^{N}_{t^{k}_{m-1}}    \right]\right) \, \di t  \bigg|
        \\
        &
        \leq O \bigg(\frac{1}{N^{2}} \bigg) \| D^{2}_{\lambda} \psi\|_{L^{\infty} ([0, T]\times \Pp(\Vv))}\,. \nonumber
\end{align}
We further estimate the first term on the right-hand side of~\eqref{e:154} in order to obtain, up to an additional remainder, the integral term in~\eqref{e:second-gradient-test}. Indeed, we have that
    \begin{align}
    \label{e:155}
        \frac{1}{N^{2}}&  \int_{t^{k}_{m-1}}^{t^{k}_{m}} \int_{\Pp(\Vv)}  \int_{\Vv} \int_{\Vv} \frac{t - t^{k}_{m-1}}{\tau^{2}_{k}} \, D^{2}_{\lambda} \psi (t, \lambda) [\delta_{\sigma'} - \delta_{\sigma''}\ , \  \delta_{\sigma'} - \delta_{\sigma''} ]  \, \di \lambda(\sigma') \, \di \lambda(\sigma'')\, \di \overline{\Lambda}^{N}_{t} (\lambda) \, \di t
        \\
        &
        = \frac{1}{N^{2}} \int_{t^{k}_{m-1}}^{t^{k}_{m}}  \int_{\Pp(\Vv)} \int_{\Vv} \int_{\Vv}\frac{t - t^{k}_{m-1}}{\tau^{2}_{k}} \, D^{2}_{\lambda} \psi (t^{k}_{m-1}, \lambda) [\delta_{\sigma'} - \delta_{\sigma''} \  ,   \ \delta_{\sigma'} - \delta_{\sigma''} ]  \, \di \lambda(\sigma') \, \di \lambda(\sigma'') \, \di \overline{\Lambda}^{N}_{t} (\lambda) \, \di t \nonumber 
        \\
        &
        \qquad + \vphantom{\int_{t^{k}_{m-1}}^{t^{k}_{m}}} R^{m}_{2,2} (\psi, \tau_{k}, N, w)  \nonumber
       \\
       &
       = \frac{1}{2 \tau_{k} N^{2}} \int_{t^{k}_{m-1}}^{t^{k}_{m}} \int_{\Pp(\Vv)} \int_{\Vv} \int_{\Vv} D^{2}_{\lambda} \psi (t, \lambda) [\delta_{\sigma'} - \delta_{\sigma''} \ , \ \delta_{\sigma'} - \delta_{\sigma''} ]  \, \di \lambda(\sigma') \, \di \lambda(\sigma'') \, \di \overline{\Lambda}^{N}_{t} (\lambda) \, \di t  \nonumber
       \\
       &
       \qquad \vphantom{\int_{t^{k}_{m-1}}^{t^{k}_{m}}} + R^{m}_{2,2} (\psi, \tau_{k}, N, w) + R^{m}_{2,3} (\psi, \tau_{k}, N, w)\,, \nonumber
    \end{align}
   where
   \begin{align*}
        R^{m}_{2,2} (\psi, \tau_{k}, N, w) & := \frac{1}{N^{2}} \int_{t^{k}_{m-1}}^{t^{k}_{m}} \int_{\Pp(\Vv)} \int_{\Vv} \int_{\Vv} \frac{t - t^{k}_{m-1}}{\tau^{2}_{k}} \Big[D^{2}_{\lambda} \psi (t, \lambda) [\delta_{\sigma'} - \delta_{\sigma''} \ , \ \delta_{\sigma'} - \delta_{\sigma''} ] 
        \\
        &
        \qquad\qquad\qquad \vphantom{\int_{\Pp(\Vv)}} - D^{2}_{\lambda} \psi (t^{k}_{m-1}, \lambda) [\delta_{\sigma'} - \delta_{\sigma''} \ , \ \delta_{\sigma'} - \delta_{\sigma''} ] \Big]   \, \di \lambda(\sigma') \, \di \lambda(\sigma'') \, \di \overline{\Lambda}^{N}_{t} (\lambda) \, \di t  \,, \nonumber
        \\
         R^{m}_{2,3} (\psi, \tau_{k}, N, w)  & :=  \frac{1}{2\tau_{k} N^{2}} \int_{t^{k}_{m-1}}^{t^{k}_{m}} \int_{\Pp(\Vv)} \int_{\Vv} \int_{\Vv} \Big[D^{2}_{\lambda} \psi (t^{k}_{m-1}, \lambda) [\delta_{\sigma'} - \delta_{\sigma''} \ , \  \delta_{\sigma'} - \delta_{\sigma''} ] 
         \\
         &
         \qquad\qquad\qquad \vphantom{\int_{\Pp(\Vv)}} - D^{2}_{\lambda} \psi (t, \lambda)  [\delta_{\sigma'} - \delta_{\sigma''} \ , \  \delta_{\sigma'} - \delta_{\sigma''} ]  \Big] \, \di \lambda(\sigma') \, \di \lambda(\sigma'') \, \di \overline{\Lambda}^{N}_{t} (\lambda) \, \di t\,.
   \end{align*}
   Let us denote by~$\mathfrak{w}_{D^{2} \psi}$ the modulus of continuity of~$D^{2}\psi$  in $[0, T] \times \Pp(\Vv)$. Without loss of generality, we assume $\mathfrak{w}_{D^{2} \psi}$ to be increasing and concave. Hence, we estimate
   \begin{align}
   \label{e:R34-1}
       | R^{m}_{2,2} (\psi, \tau_{k}, N, w) |  \leq   O\bigg( \frac{1 } {N^{2}} \bigg) \mathfrak{w}_{D^{2}\psi} (\tau_{k}) \,, 
       \\
       \label{e:R34-2}| R^{m}_{2,3}(\psi, \tau_{k}, N, w) | \leq  O\bigg( \frac{1 } {N^{2}} \bigg) \mathfrak{w}_{D^{2}\psi} (\tau_{k}) \,.
   \end{align}
   To conclude for~\eqref{e:second-gradient-test}, we are left to estimate the last term in~\eqref{e:107}. By the law of total probability, and by the definition of $\lambda_t^N$ given in \eqref{e:def-interpolata-2}, for every $r \in [0, 1]$ and every $t \in (t^{k}_{m-1}, t^{k}_{m})$ we may write
   \begin{align}
       \label{e:10000}
       & \bigg| \mathbb{E} \bigg( \left( D^{2}_{\lambda} \psi \left( t, \lambda^{N}_{t^{k}_{m-1}} + r (\lambda^{N}_{t} - \lambda^{N}_{t^{k}_{m-1}}) \right) -  D^{2}_{\lambda} \psi \big(t, \lambda^{N}_{t^{k}_{m-1}} \big) \right)   \left[ \lambda^{N}_{t^{k}_{m}} - \lambda^{N}_{t^{k}_{m-1}} ,  \lambda^{N}_{t^{k}_{m}} - \lambda^{N}_{t^{k}_{m-1}} \right] \bigg) \bigg|
       \\
       &
       \quad \leq \mathbb{E} \bigg( \mathfrak{w}_{D^{2} \psi} \left( r \left\| \lambda^{N}_{t} - \lambda^{N}_{t^{k}_{m-1}}\right\|_{\rm BL} \right) \left\| \lambda^{N}_{t^{k}_{m}} - \lambda^{N}_{t^{k}_{m-1}} \right\|_{\rm BL}^{2} \bigg) \nonumber
       \\
       &
       \quad \leq  \int_{\Vv^N}\mathbb{E} \bigg(  \mathfrak{w}_{D^{2} \psi} \left(  \big\| \lambda^{N}_{t^{k}_{m}} - \lambda \big\|_{\rm BL} \right) \big\| \lambda^{N}_{t^{k}_{m}} - \lambda^{\bm{\sigma}} \big\|_{\rm BL}^{2} \, \Big| \, {\bm{\sigma}}^{N}(t^{k}_{m-1}) = {\bm{\sigma}} \bigg) \, \di {\bm{\sigma}}^{N}(t^{k}_{m-1})_\# \mathbb{P} ({\bm{\sigma}}) \nonumber\,.
   \end{align}
   By construction of $\lambda^{N}_{t^{k}_{m}}$ and by the concavity of~$\mathfrak{w}_{D^{2} \psi}$ we continue in~\eqref{e:10000} with
      \begin{align}
       \label{e:10001}
       & \bigg| \mathbb{E} \bigg( \left( D^{2}_{\lambda} \psi \left( t, \lambda^{N}_{t^{k}_{m-1}} + r (\lambda^{N}_{t} - \lambda^{N}_{t^{k}_{m-1}}) \right) -  D^{2}_{\lambda} \psi \big(t, \lambda^{N}_{t^{k}_{m-1}} \big) \right)   \left[ \lambda^{N}_{t^{k}_{m}} - \lambda^{N}_{t^{k}_{m-1}} ,  \lambda^{N}_{t^{k}_{m}} - \lambda^{N}_{t^{k}_{m-1}} \right] \bigg)\bigg|
       \\
       &
       \leq  \int_{\Vv^N}\mathbb{E} \bigg(  \mathfrak{w}_{D^{2} \psi} \left( \frac{1}{N} \sum_{n=1}^{N} \big\| \delta_{\sigma^{N}_{n} ( t^{k}_{m})} - \delta_{\sigma_{n}} \big\|_{\rm BL} \right) \times \nonumber \\ 
       & \hspace{2em} \times \left\| \frac{1}{N} \sum_{n=1}^{N}  \delta_{\sigma^{N}_{n} ( t^{k}_{m})} - \delta_{\sigma_{n}} \right\|_{\rm BL}^{2} \, \Big| \, {\bm{\sigma}}^{N}(t^{k}_{m-1}) = {\bm{\sigma}} \bigg) \, \di {\bm{\sigma}}^{N}(t^{k}_{m-1})_\# \mathbb{P} ({\bm{\sigma}}) \nonumber
       \\
       &
       = \frac{1}{N^{2} }  \int_{\Vv^N} \sum_{n, n'=1}^{N} \mathfrak{w}_{D^{2} \psi} \left( \frac{1}{N} \| \delta_{\sigma_{n'}} - \delta_{\sigma_{n}} \|_{\rm BL} \right) \| \delta_{\sigma_{n'}} - \delta_{\sigma_{n}} \|_{\rm BL}^{2} \times \nonumber \\ 
      & \hspace{2em} \times \mathbb{P} \big( \sigma^{N}_{n} (t^{k}_{m}) = \sigma_{n'} \big| \, {\bm{\sigma}}^{N}(t^{k}_{m-1}) = {\bm{\sigma}} \big) \, \di {\bm{\sigma}}^{N}(t^{k}_{m-1})_\# \mathbb{P}({\bm{\sigma}}) \nonumber 
       \\
       &
       \leq \int_{\Vv^N} \frac{4}{N^{2}}  \mathfrak{w}_{D^{2} \psi} \left( \sum_{n, n'=1}^{N}  \frac{1}{N} \| \delta_{\sigma_{n'}} - \delta_{\sigma_{n}} \|_{\rm BL}  \, \mathbb{P} \big( \sigma^{N}_{n} (t^{k}_{m}) = \sigma_{n'} \big| \, {\bm{\sigma}}^{N}(t^{k}_{m-1}) = {\bm{\sigma}}  \big) \right)  \, \di {\bm{\sigma}}^{N}(t^{k}_{m-1})_\# \mathbb{P}({\bm{\sigma}}) \nonumber
       \\
       & \leq \frac{4 }{N^{2}}  \mathfrak{w}_{D^{2} \psi} \bigg( \frac{2}{N} \bigg) \,, \nonumber
     \end{align}
where in the last inequality we use \eqref{def-tran-prob}. We set
     \begin{align*}
         R^{m}_{2, 4} (\psi, \tau_{k}, N, w) := \int_{t^{k}_{m-1}}^{t^{k}_{m}} \int_{0}^{1} \frac{t - t^{k}_{m-1}}{\tau^{2}_{k}} \, \mathbb{E} \Bigg(& \bigg( D^{2}_{\lambda} \psi \left( t, \lambda^{N}_{t^{k}_{m-1}} + r (\lambda^{N}_{t} - \lambda^{N}_{t^{k}_{m-1}}) \right) 
         \\
         &
         -  D^{2}_{\lambda} \psi \big(t, \lambda^{N}_{t^{k}_{m-1}} \big) \bigg)   \left[ \lambda^{N}_{t^{k}_{m}} - \lambda^{N}_{t^{k}_{m-1}} ,  \lambda^{N}_{t^{k}_{m}} - \lambda^{N}_{t^{k}_{m-1}} \right] \bigg) \, \di r \di t \,.\nonumber
     \end{align*}
     In view of~\eqref{e:10001} we have that
     \begin{align}
         \label{e:10002}
         | R^{m}_{2, 4} (\psi, \tau_{k}, N, w) | \leq O \bigg ( \frac{1}{N^{2}} \bigg) \mathfrak{w}_{D^{2} \psi} \bigg( \frac{2}{N} \bigg)\,.
     \end{align}
     
     We conclude by defining
   \begin{align*}
       R^{m}_{2} (\psi, \tau_{k}, N, w) := \sum_{\ell = 1}^{4}R^{m}_{2, \ell} (\psi, \tau_{k}, N, w) \,. 
   \end{align*}
Thus, if $w\in\left[0, \frac{1}{2(C_\pi +1)}\right]$, combining~\eqref{e:154}, \eqref{e:rest-2.1}, \eqref{e:155}, \eqref{e:R34-1}, \eqref{e:R34-2}, \eqref{e:10001}, and \eqref{e:10002}, we infer~\eqref{e:second-gradient-test}. Otherwise, if \eqref{strong-sel-ass} holds, combining \eqref{e:154-strong}, \eqref{e:10001}, and \eqref{e:10002}, we obtain \eqref{e:second-gradient-test-strong}.
\end{proof}

We are now in a position to conclude the proof of Theorem~\ref{t:almost-equation}.

\begin{proof}[Proof of Theorem~\ref{t:almost-equation}]
For every test $\psi \in C^{2} ([0, T] \times \Pp(\Vv))$ and every $t \in (t^{k}_{m-1}, t^{k}_{m})$ we have that
\begin{align}
\label{e:100}
\frac{\di}{\di t} [ \psi ( t, \lambda^{N}_{t}) ] = \partial_{t} \psi(t, \lambda^{N}_{t}) + D_{\lambda} \psi(t, \lambda^{N}_{t}) \left[ \frac{\lambda^{N}_{t^{k}_{m}} - \lambda^{N}_{t^{k}_{m-1}} }{\tau_{k}} \right]\,.
\end{align}
Integrating~\eqref{e:100} over $\Omega$ and over $(t^{k}_{m-1}, t^{k}_{m})$ we get that
\begin{align}
\label{e:101}
\int_{\Pp (\Vv)} &  \psi ( t^{k}_{m}, \lambda)\, \di \Lambda^{N}_{t^{k}_{m}} (\lambda) - \int_{\Pp (\Vv)} \psi ( t^{k}_{m-1}, \lambda)\, \di \Lambda^{N}_{t^{k}_{m-1}} (\lambda) \\
&
=  \int_{t^{k}_{m-1}}^{t^{k}_{m}} \int_{\Pp(\Vv)}\partial_{t} \psi(t, \lambda) \, \di \Lambda^{N}_{t} (\lambda) \, \di t + \frac{1}{\tau_{k}} \int_{t^{k}_{m-1}}^{t^{k}_{m}} \mathbb{E} \bigg(  D_{\lambda} \psi(t, \lambda^{N}_{t}  ) \left[ \lambda^{N}_{t^{k}_{m}}   - \lambda^{N}_{t^{k}_{m-1}}    \right] \bigg)  \di t \,. \nonumber
\end{align}
Hence, we have to estimate the last term on the right-hand side of~\eqref{e:101}. By Taylor expansion, we  have that
\begin{align}
\label{e:102}
D_{\lambda} & \psi(t, \lambda^{N}_{t}  ) \left[ \lambda^{N}_{t^{k}_{m}}   - \lambda^{N}_{t^{k}_{m-1}}  \right] = D_{\lambda} \psi(t, \lambda^{N}_{t^{k}_{m-1}}  ) \left[ \lambda^{N}_{t^{k}_{m}}   - \lambda^{N}_{t^{k}_{m-1}}    \right] 
\\
&
+ \frac{t - t^{k}_{m-1}}{\tau_{k}} \int_{0}^{1}  D^{2}_{\lambda} \psi \left( t, \lambda^{N}_{t^{k}_{m-1}} + r (\lambda^{N}_{t} - \lambda^{N}_{t^{k}_{m-1}}) \right)  \left[ \lambda^{N}_{t^{k}_{m}}   - \lambda^{N}_{t^{k}_{m-1}}  , \lambda^{N}_{t^{k}_{m}}   - \lambda^{N}_{t^{k}_{m-1}}    \right] \, \di r \,. \nonumber
\end{align}
Averaging equation~\eqref{e:102} over $\Om$ and integrating over the time interval~$[t^{k}_{m-1}, t^{k}_{m}]$ we deduce both~\eqref{e:approx_eq} and \eqref{e:approx_eq-strong} thanks to Lemmas~\ref{l:gradient-test}--\ref{l:second-gradient-test} after setting
\begin{align*}
    & \rho_{m} ( \psi, \tau_{k}, N, w) := R^{m}_{1} ( \psi, \tau_{k}, N, w) + R^{m}_{2} ( \psi, \tau_{k}, N, w) \,.
\end{align*}
\end{proof}

\begin{corollary}
    \label{c:almost-equation}
    Assume that $(\pi.1)$ holds. For every $k \in \mathbb{N} \setminus \{0\}$, and every $\psi \in C^{2} ([0, T] \times \Pp(\Vv))$ we have that
    \begin{itemize}
        \item[(i)] if $w\in \left[0, \frac{1}{2(C_\pi+1)}\right]$, then
    \begin{align}
    \label{e:approx_eq-2}
        \int_{\Pp(\Vv)} & \psi (T, \lambda) \, \di \Lambda^{N}_{T} (\lambda) -     \int_{\Pp(\Vv)} \psi (0, \lambda) \, \di \Lambda^{N}_{0} (\lambda) = \int_{0}^{T} \int_{\Pp(\Vv)} \partial_{t} \psi(t, \lambda) \, \di \Lambda^{N}_{t} (\lambda) \, \di t 
        \\
        &
        +  \frac{w}{\tau_{k} N}  \int_{0}^{T} \int_{\Pp(\Vv)} D_{\lambda} \psi (t, \lambda) [(\pi_{\lambda} - \overline{\pi}_{\lambda}) \lambda] \, \di \overline{\Lambda}^{N}_{t} (\lambda) \, \di t  \nonumber
        \\
        &
        +\frac{1}{2 \tau_{k} N^{2}} \int_{0}^{T} \int_{\Pp(\Vv)} \int_{\Vv} \int_{\Vv} D^{2}_{\lambda} \psi (t, \lambda) [\delta_{\sigma} - \delta_{\overline{\sigma}} , \delta_{\sigma} - \delta_{\overline{\sigma}} ]  \, \di \lambda(\sigma) \, \di \lambda(\overline{\sigma}) \, \di \overline{\Lambda}^{N}_{t} (\lambda) \, \di t + \widetilde{\rho} (\psi, \tau_{k}, N, w)\,,\nonumber
    \end{align}
    where
    \begin{align}
    \label{e:rhok-2}
        | \widetilde{\rho} (\psi, \tau_{k}, N, w)| \leq & \ O \bigg(\frac{  w^{2}  }{\tau_{k} N} \bigg) \| D \psi\|_{L^{\infty} ([0, T]\times \Pp(\Vv))}  + O \bigg( \frac{  w }{\tau_{k} N^{2}} \bigg) \| D^{2} \psi\|_{L^{\infty} ([0, T]\times \Pp(\Vv))}
        \\
        &
        + O \bigg( \frac{ 1}{\tau_{k} N^{2}} \bigg)  \bigg( \mathfrak{w}_{D^{2}\psi} (\tau_{k}) +  \mathfrak{w}_{D^{2}\psi} \bigg( \frac{1}{N}\bigg) \bigg)   \,; \nonumber
    \end{align}
\item[(ii)] if \eqref{strong-sel-ass} holds, then
\begin{align}
    \label{e:approx_eq-2-strong}
        \int_{\Pp(\Vv)} & \psi (T, \lambda) \, \di \Lambda^{N}_{T} (\lambda) -     \int_{\Pp(\Vv)} \psi (0, \lambda) \, \di \Lambda^{N}_{0} (\lambda) = \int_{0}^{T} \int_{\Pp(\Vv)} \partial_{t} \psi(t, \lambda) \, \di \Lambda^{N}_{t} (\lambda) \, \di t 
        \\
        &
        +  \frac{1}{\tau_{k} N}  \int_{0}^{T} \int_{\Pp(\Vv)} D_{\lambda} \psi (t, \lambda)  \left[\frac{w( \pi_{\lambda} - \overline{\pi}_{\lambda}) \lambda}{1 + w( \overline{\pi}_{\lambda} - 1) } \right] \, \di \overline{\Lambda}^{N}_{t} (\lambda) \, \di t  
         + \widetilde{\zeta} (\psi, \tau_{k}, N)\,,\nonumber
    \end{align}
    where
    \begin{align}
    \label{e:rhok-2-strong}
        | \widetilde{\zeta} (\psi, \tau_{k}, N)| \leq & \  O \bigg( \frac{  1 }{\tau_{k} N^{2}} \bigg) \| D^{2} \psi\|_{L^{\infty} ([0, T]\times \Pp(\Vv))} \,. 
    \end{align}
\end{itemize}
\end{corollary}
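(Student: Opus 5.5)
The corollary follows by summing the single-step identities of Theorem~\ref{t:almost-equation} over $m \in \{1, \ldots, k\}$.

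\textbf{Telescoping.} Apply \eqref{e:approx_eq} (respectively \eqref{e:approx_eq-strong}) on each interval $(t^{k}_{m-1}, t^{k}_{m})$ and sum over $m$. The left-hand sides telescope to
\[
\int_{\Pp(\Vv)} \psi(T,\lambda)\,\di\Lambda^{N}_{T}(\lambda) - \int_{\Pp(\Vv)} \psi(0,\lambda)\,\di\Lambda^{N}_{0}(\lambda),
\]
since $t^{k}_{k}=T$ and $t^{k}_{0}=0$. On the right-hand side, the integrals over the consecutive intervals concatenate into integrals over $[0,T]$. This works for the $\partial_t\psi$ term, the first-order term and the second-order term alike, because the integrands are the same expressions on every interval. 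The piecewise constant curve $\overline{\Lambda}^{N}_{t}$ is defined consistently on each $[t^{k}_{m-1},t^{k}_{m})$, and the endpoints form a null set in time, so there is no ambiguity there.

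\textbf{Remainder.} Set $\widetilde{\rho} := \sum_{m=1}^{k} \rho_{m}(\psi,\tau_k,N,w)$, and $\widetilde{\zeta} := \sum_{m=1}^{k}\zeta_m(\psi,N)$ in case (ii). The bounds \eqref{e:rhok} and \eqref{e:rhok-strong} are uniform in $m$: the constants hidden in $O(\cdot)$ depend only on $C_\pi$, and the sup norms and moduli of continuity of $D\psi$, $D^2\psi$ are taken over the whole of $[0,T]\times\Pp(\Vv)$. Summing $k$ such bounds therefore multiplies each by
\[
k = \frac{T}{\tau_k},
\]
and $T$ is absorbed into the constant of $O(\cdot)$. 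This turns $O(w^2/N)$, $O(w/N^2)$ and $O(1/N^2)$ into $O(w^2/(\tau_kN))$, $O(w/(\tau_kN^2))$ and $O(1/(\tau_kN^2))$, which are exactly \eqref{e:rhok-2} and \eqref{e:rhok-2-strong}.

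\textbf{Main point to check.} The only delicate step is this uniformity in $m$. The remainder bounds in Lemmas~\ref{l:gradient-test} and~\ref{l:second-gradient-test} must not depend on the interval index or on the law $\Lambda^{N}_{t^{k}_{m-1}}$. Inspecting their proofs, the constants are $2C_\pi(1+C_\pi)$, $4(C_\pi+1)$ and $4$, together with the global modulus $\mathfrak{w}_{D^2\psi}$. The modulus appears evaluated at $2/N$ in \eqref{e:10002}, which concavity allows one to write as $O(1)\,\mathfrak{w}_{D^2\psi}(1/N)$. Since all of these are independent of $m$, the summation goes through.
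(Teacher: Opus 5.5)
Your proposal is correct and matches the paper's proof: sum the one-step identities of Theorem~\ref{t:almost-equation} over $m=1,\dots,k$, let the left-hand sides telescope, and use $k=T/\tau_k$ to turn the $m$-uniform bounds on $\rho_m$ and $\zeta_m$ into \eqref{e:rhok-2} and \eqref{e:rhok-2-strong}. Your remarks on uniformity in $m$ and on $\mathfrak{w}_{D^{2}\psi}(2/N)\le 2\,\mathfrak{w}_{D^{2}\psi}(1/N)$ (from concavity) spell out details the paper leaves implicit.
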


\begin{proof}
    It is enough to sum up~\eqref{e:approx_eq} and \eqref{e:approx_eq-strong} respectively, for $m \in \{1, \ldots, k\}$, recall that $\tau_k=\frac{T}{k}$, and set
    \begin{align*}
        & \widetilde{\rho} (\psi, \tau_{k}, N, w) := \sum_{m=1}^{k} \rho_{m} (\psi, \tau_{k}, N, w)\,.
    \end{align*}
\end{proof}

For later use, we also write the following variant of Corollary~\ref{c:almost-equation}, where we take test functions independent of~$t$ and we localize~\eqref{e:approx_eq-2} and \eqref{e:approx_eq-2-strong} respectively.

\begin{corollary}
    \label{c:almost-equation-2}
     Assume that $(\pi.1)$ holds. There exists a positive constant $C>0$ such that for every $k \in \mathbb{N} \setminus \{0\}$, every $s < t \in [0, T]$, and every $\psi \in C^{2} ( \Pp(\Vv))$ we have that
     \begin{itemize}
         \item[(i)] if $w \in \left[0, \frac{1}{2(C_{\pi} + 1)}\right] $, then
    \begin{align}
    \label{e:approx_eq-3}
        \int_{\Pp(\Vv)} & \psi (\lambda) \, \di \Lambda^{N}_{t} (\lambda) -     \int_{\Pp(\Vv)} \psi (\lambda) \, \di \Lambda^{N}_{s} (\lambda) 
        \\
        &
        \leq  C \bigg( \frac{w}{\tau_{k} N} \| D\psi\|_{L^{\infty} (\Pp(\Vv))} +  \frac{2}{\tau_{k} N^{2}} \| D^{2}\psi\|_{L^{\infty} (\Pp(\Vv))} \bigg) | t - s| + \mu ( s, t, \psi, \tau_{k}, N, w)\,,\nonumber
    \end{align}
    where
    \begin{align*}
        | \mu ( s, t, \psi, \tau_{k}, N, w) | \leq & \ O \bigg(\frac{  w^{2}  }{\tau_{k} N} \bigg) \| D \psi\|_{L^{\infty} ( \Pp(\Vv))}  + O \bigg( \frac{  w }{\tau_{k} N^{2}} \bigg) \| D^{2} \psi\|_{L^{\infty} ( \Pp(\Vv))}
        \\
        &
        + O \bigg( \frac{ 1}{\tau_{k} N^{2}} \bigg) \mathfrak{w}_{D^{2}\psi} \bigg( \frac{1}{N}\bigg)  \,; \nonumber 
    \end{align*}
    \item[(ii)] if \eqref{strong-sel-ass} holds, then
     \begin{align}
    \label{e:approx_eq-3-strong}
        \int_{\Pp(\Vv)} & \psi (\lambda) \, \di \Lambda^{N}_{t} (\lambda) -     \int_{\Pp(\Vv)} \psi (\lambda) \, \di \Lambda^{N}_{s} (\lambda) 
        \\
        &
        \leq  C\left(\frac{1}{\tau_{k} N} \| D\psi\|_{L^{\infty} (\Pp(\Vv))}\right)| t - s| + \hat{\mu} ( s, t, \psi, \tau_{k}, N)\,,\nonumber
    \end{align}
    where
    \begin{align*}
        | \hat{\mu} ( s, t, \psi, \tau_{k}, N) | \leq & \ O \bigg( \frac{ 1 }{\tau_{k} N^{2}} \bigg) \| D^{2} \psi\|_{L^{\infty} ( \Pp(\Vv))}.
    \end{align*}
    \end{itemize}
\end{corollary}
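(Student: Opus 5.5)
We localize Theorem~\ref{t:almost-equation} to the discrete grid. First we treat $s=t^{k}_{i}$ and $t=t^{k}_{j}$ with $i<j$; then we handle general $s<t$ through Lemma~\ref{l:distance}. The test $\psi\in C^{2}(\Pp(\Vv))$ does not depend on time. Hence the $\partial_t\psi$ term in \eqref{e:approx_eq} vanishes. Moreover the modulus of continuity of $D^2\psi$ in the time variable is zero, so the contribution $\mathfrak{w}_{D^{2}\psi}(\tau_k)$ in \eqref{e:rhok} drops out; it came only from $R^m_{2,2}$ and $R^m_{2,3}$, which vanish identically here. This explains why only $\mathfrak{w}_{D^2\psi}(1/N)$ appears in the bound for $\mu$.

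\textbf{Step 1 (grid points).} We sum \eqref{e:approx_eq} over $m=i+1,\dots,j$. We bound the drift term by
\[
\|D\psi\|_{L^\infty}\,\|(\pi_\lambda-\overline\pi_\lambda)\lambda\|_{\rm BL}\le 2C_\pi\|D\psi\|_{L^\infty}.
\]
Since $\|\delta_\sigma-\delta_{\overline\sigma}\|_{\rm BL}\le 2$, the second-order integrand is bounded by $4\|D^2\psi\|_{L^\infty}$, so the diffusion term is controlled by $\frac{2}{\tau_kN^2}\|D^2\psi\|_{L^\infty}$ times the length of the time interval. This gives the main part of \eqref{e:approx_eq-3} with factor $(t^k_j-t^k_i)$ and a constant $C$ depending only on $C_\pi$.

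The remainders $\rho_m$ are summed over at most $k=T/\tau_k$ indices. This turns $O(w^2/N)$ into $O(w^2/(\tau_kN))$, and similarly for the other remainder terms. These sums define $\mu$, with the bound stated in the corollary. Case (ii) is identical: we use \eqref{e:approx_eq-strong} and \eqref{e:rhok-strong}, together with the fact that under \eqref{strong-sel-ass}
\[
\frac{w|\pi_\lambda-\overline\pi_\lambda|}{1+w(\overline\pi_\lambda-1)}
\]
is uniformly bounded. Indeed, the denominator equals $\overline F_\lambda$, which is bounded away from zero on the compact set $\Pp(\Vv)$ by continuity and Remark~\ref{strong-sel}.

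\textbf{Step 2 (inside one interval).} For $s,t\in[t^k_{m-1},t^k_m]$ we do not go through the approximate equation. Instead we integrate \eqref{e:100} directly between $s$ and $t$, which gives
\[
\int\psi\,\di(\Lambda^N_t-\Lambda^N_s)=\frac1{\tau_k}\int_s^t\mathbb{E}\Big(D_\lambda\psi(\lambda^N_r)\big[\lambda^N_{t^k_m}-\lambda^N_{t^k_{m-1}}\big]\Big)\,\di r .
\]
We then expand as in \eqref{e:102}. The first-order part is computed exactly by \eqref{eq:2609091859}. It yields $\frac{w}{\tau_kN}$ times a bounded quantity times $|t-s|$ in case (i), and $\frac{1}{\tau_kN}$ times a bounded quantity in case (ii). The second-order part is bounded in expectation by
\[
\frac{|t-s|}{\tau_k}\,\|D^2\psi\|_{L^\infty}\,\mathbb{E}\big\|\lambda^N_{t^k_m}-\lambda^N_{t^k_{m-1}}\big\|_{\rm BL}^2\le \frac{4|t-s|}{\tau_kN^2}\|D^2\psi\|_{L^\infty},
\]
using the conditional computation of Lemma~\ref{l:second-gradient-test}, where each step moves mass $1/N$ with total probability at most one. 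This bound is linear in $|t-s|$.

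\textbf{Step 3 (general $s<t$).} We split $[s,t]$ into a partial interval at each end and a union of full grid intervals in between. Steps 1 and 2 are applied to the respective pieces and the estimates are added. The total $|t-s|$ factors add up, and $\mu$ collects at most $k+2$ remainders, so the bounds keep the same orders.

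\textbf{Main obstacle.} The delicate point is the partial intervals: Theorem~\ref{t:almost-equation} is stated only on full intervals, so there we must redo the first-order expansion by hand. We must make sure that no contribution of order $1/N$ (as from Lemma~\ref{l:distance}) enters $\mu$, because such a term is not $O(1/(\tau_kN^2))$. Working linearly in $|t-s|$, as in Step 2, avoids this.
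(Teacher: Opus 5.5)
Your proposal is correct and follows essentially the paper's route: you sum Theorem~\ref{t:almost-equation} over the full grid intervals and treat the two boundary pieces by integrating \eqref{e:100} and Taylor expanding as in \eqref{e:102}, and your direct bound $\frac{4|t-s|}{\tau_k N^2}\|D^2\psi\|_{L^\infty}$ on the second-order part there is a leaner version of the paper's ``repeat the lemmas'' step (in case (ii) that piece goes into $\hat\mu$, which is harmless since $|t-s|\le\tau_k$). Drop the stray opening reference to Lemma~\ref{l:distance}: as you note yourself, using it would introduce an $O(1/N)\,\mathrm{Lip}(\psi)$ error that $\mu$ does not allow, and your Steps~2--3 never need it.
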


\begin{proof}
Let $s < t \in [0, T]$ and let $m, h \in \{1, \ldots, k\}$ be such that $s \in [t^{k}_{h-1}, t^{k}_{h})$ and $t \in [t^{k}_{m-1}, t^{k}_{m})$. Let us assume that $m >h+1$, otherwise the argument is even simpler. Assume the case $w\in \left[0,\frac{1}{2(C_\pi+1)}\right]$. Applying Theorem~\ref{t:almost-equation} to the test $\psi \in C^{2} (\Pp(\Vv))$ (independent of time) we have that
\begin{align*}
     \int_{\Pp(\Vv)} & \psi ( \lambda) \, \di \Lambda^{N}_{t^{k}_{m-1}} (\lambda) -     \int_{\Pp(\Vv)} \psi (\lambda) \, \di \Lambda^{N}_{t^{k}_{h}} (\lambda) = \frac{w}{\tau_{k} N}  \int_{t^{k}_{h}}^{t^{k}_{m-1}} \int_{\Pp(\Vv)} D_{\lambda} \psi (\lambda) [(\pi_{\lambda} - \overline{\pi}_{\lambda}) \lambda] \, \di \overline{\Lambda}^{N}_{r} (\lambda) \, \di r  \nonumber
        \\
        &
        +\frac{1}{2 \tau_{k} N^{2}} \int_{t^{k}_{h}}^{t^{k}_{m-1}} \int_{\Pp(\Vv)} \int_{\Vv} \int_{\Vv} D^{2}_{\lambda} \psi ( \lambda) [\delta_{\sigma} - \delta_{\overline{\sigma}} , \delta_{\sigma} - \delta_{\overline{\sigma}} ]  \, \di \lambda(\sigma) \, \di \lambda(\overline{\sigma}) \, \di \overline{\Lambda}^{N}_{r} (\lambda) \, \di r \nonumber
        \\
        &
        + \vphantom{\int_{t^{k}_{m-1}}^{t^{k}_{m}}} \overline{\rho} ( h, m, \psi, \tau_{k}, N, w)\,,\nonumber 
\end{align*}
where we have set 
\begin{align*}
\overline{\rho} ( h, m, \psi, \tau_{k}, N, w) := \sum_{n=h+1}^{m-1} \rho_{n} ( \psi, \tau_{k}, N, w)\,.
\end{align*}
This, together with~$(\pi.1)$ and~\eqref{e:pi-bdd}, implies that
\begin{align}
\label{e:lip-1}
    \int_{\Pp(\Vv)} & \psi ( \lambda) \, \di \Lambda^{N}_{t^{k}_{m-1}} (\lambda) -     \int_{\Pp(\Vv)} \psi (\lambda) \, \di \Lambda^{N}_{t^{k}_{h}} (\lambda)
    \\
    &
    \leq C\bigg( \frac{w}{\tau_{k} N} \| D\psi\|_{L^{\infty} (\Pp(\Vv))} + \frac{2}{\tau_{k} N^2} \| D^{2} \psi\|_{L^{\infty} (\Pp(\Vv))} \bigg) | t^{k}_{m-1} - t^{k}_{h} | + \overline{\rho}\,, \nonumber
\end{align}
for a constant~$C>0$ independent of $s, t, \psi, \tau_k, N, w$. We note that, since $\psi$ is independent of time, by Theorem \ref{t:almost-equation}, we get
\begin{align*}
|\overline{\rho} ( h, m, \psi, \tau_{k}, N, w)|\leq & O \bigg(\frac{  w^{2}  }{\tau_{k} N} \bigg) \| D \psi\|_{L^{\infty} ( \Pp(\Vv))}  + O \bigg( \frac{  w }{\tau_{k} N^{2}} \bigg) \| D^{2} \psi\|_{L^{\infty} ( \Pp(\Vv))}
        \\
        &
        + O \bigg( \frac{ 1}{\tau_{k} N^{2}} \bigg) \mathfrak{w}_{D^{2}\psi} \bigg( \frac{1}{N}\bigg) \,.
\end{align*}
Hence, we have to add to~\eqref{e:lip-1} only the contributions in the intervals $[t^{k}_{m-1}, t]$ and $[s, t^{k}_{h}]$. We deal with the interval $[t^{k}_{m-1}, t]$, as the argument for the second one is very similar. As in the proof of Theorem~\ref{t:almost-equation}, we have that
\begin{align*}
    & \int_{\Pp(\Vv)}  \psi(\lambda) \, \di \Lambda^{N}_{t} (\lambda) - \int_{\Pp(\Vv)} \psi(\lambda) \, \di \Lambda^{N}_{t^{k}_{m-1}} (\lambda) = \frac{1}{\tau_{k}} \int_{t^{k}_{m-1}}^{t} \mathbb{E} \bigg(  D_{\lambda} \psi( \lambda^{N}_{r}  ) \left[ \lambda^{N}_{t^{k}_{m}}   - \lambda^{N}_{t^{k}_{m-1}}    \right] \bigg)  \di r
    \\
    &
    =  \frac{1}{\tau_{k}} \int_{t^{k}_{m-1}}^{t}  \mathbb{E} \left(  D_{\lambda} \psi(\lambda^{N}_{t^{k}_{m-1}}  ) \left[ \lambda^{N}_{t^{k}_{m}}  - \lambda^{N}_{t^{k}_{m-1}}    \right] \right)\, \di r  
    \\
    &
    \quad +  \frac{1}{\tau_{k}} \int_{t^{k}_{m-1}}^{t} \int_{0}^{1} \frac{r - t^{k}_{m-1}}{\tau_{k}}  \, \mathbb{E} \bigg( D^{2}_{\lambda} \psi \big( \lambda^{N}_{t^{k}_{m-1}}  + \theta ( \lambda^{N}_{r}  - \lambda^{N}_{t^{k}_{m-1}} ) \big) \left[ \lambda^{N}_{t^{k}_{m}}   - \lambda^{N}_{t^{k}_{m-1}}  , \lambda^{N}_{t^{k}_{m}}   - \lambda^{N}_{t^{k}_{m-1}}    \right]\bigg) \,\di \theta \, \di r \,.
\end{align*}
By the regularity of the test function and repeating the arguments of Lemmas~\ref{l:gradient-test}--\ref{l:second-gradient-test}, we infer that (notice that now $\psi$ is independent of time)
\begin{align}
\label{e:lip-2}
    \int_{\Pp(\Vv)} &  \psi(\lambda) \, \di \Lambda^{N}_{t} (\lambda) - \int_{\Pp(\Vv)} \psi(\lambda) \, \di \Lambda^{N}_{t^{k}_{m-1}} (\lambda) 
    \\
    &
    \leq C \bigg( \frac{w}{\tau_{k} N} \| D\psi\|_{L^{\infty} (\Pp(\Vv))} + \frac{ 2  }{  \tau_{k} N^{2}} \| D^{2}\psi\|_{L^{\infty} (\Pp(\Vv))} \bigg) | t - t^{k}_{m-1}| + \overline{\rho}_1(m,t,\psi,N,w), \nonumber
\end{align}
for a positive constant $C>0$ independent of $s, t, \psi, \tau_k, N, w$, where
\begin{align*}
|\overline{\rho}_1 (m, t, \psi, N, w)|\leq & O \bigg(\frac{  w^{2}  }{ N} \bigg) \| D \psi\|_{L^{\infty} ( \Pp(\Vv))}  + O \bigg( \frac{  w }{N^{2}} \bigg) \| D^{2} \psi\|_{L^{\infty} ( \Pp(\Vv))}
        + O \bigg( \frac{ 1}{ N^{2}} \bigg) \mathfrak{w}_{D^{2}\psi} \bigg( \frac{1}{N}\bigg) \,.
\end{align*}
A similar argument leads to the estimate
\begin{align}
\label{e:lip-3}
    \int_{\Pp(\Vv)} &  \psi(\lambda) \, \di \Lambda^{N}_{t^{k}_{h}} (\lambda) - \int_{\Pp(\Vv)} \psi(\lambda) \, \di \Lambda^{N}_{s} (\lambda) 
    \\
    &
    \leq C \bigg( \frac{w}{\tau_{k} N} \| D\psi\|_{L^{\infty} (\Pp(\Vv))} + \frac{ 2  }{  \tau_{k} N^{2}} \| D^{2}\psi\|_{L^{\infty} (\Pp(\Vv))} \bigg) | t^{k}_{h} - s |  + \overline{\rho}_2(s,h,\psi, N,w),  \nonumber
\end{align}
where $\overline{\rho}_2$ is estimated as $\overline{\rho}_1$. Combining~\eqref{e:lip-1}--\eqref{e:lip-3} and defining 
$$
\mu(s,t,\psi,\tau_k,N,w):= \overline{\rho}+\overline{\rho}_1+\overline{\rho}_2,
$$
we conclude for~\eqref{e:approx_eq-3}. \\
Finally, the proof of \eqref{e:approx_eq-3-strong} under Assumption \eqref{strong-sel-ass} follows by the exact same argument.
\end{proof}

\section{Proofs of the Theorems~\ref{t:compactness-1}, ~\ref{t:compactness-3}, and \ref{t:compactness-3-strong}}
\label{s:compactness}

In order to prove Theorems \ref{t:compactness-1} and \ref{t:compactness-3}, we fix $N_{k} \in \mathbb{N} \setminus \{0\}$ and $w_{k} \in \left[0, \frac{1}{2( C_{\pi} + 1)}\right]$ two sequences such that $N_{k} \to +\infty$ and~$w_{k} \to 0$ as $k \to \infty$. We recall that $\tau_k=\frac{T}{k}$. The precise scalings are given in~\eqref{e:scalings} and in~\eqref{e:scalings-2}. Both proofs rely on the expansion obtained in Corollary~\ref{c:almost-equation-2}

\subsection{Proof of Theorem~\ref{t:compactness-1}}
\label{sub:thm1}

We fix $w_{k}$, $\tau_{k}$, and $N_{k}$ such that~\eqref{e:scalings} holds. We divide the convergence proof in two steps. The first one concerns compactness of the sequence $\Lambda^{N_{k}}$; the second one shows that the limit curve $\Lambda$ is a very weak solution to the replicator equations with genetic drift. 

\smallskip

\paragraph{\bf Step 1: Compactness of $\Lambda^{N_{k}}$.} We divide the proof into two sub-steps. 

\noindent {\em Step 1.1: Construction of a limit curve.} We notice that in view of~\eqref{e:scalings} and of Corollary~\ref{c:almost-equation-2}, there exists a constant $C>0$ such that for every $k \in \mathbb{N} \setminus\{0\}$, every $s < t \in [0, T]$, and every $\psi \in C^{2} (\Pp(\Vv))$ we have that
\begin{align}
\label{e:something}
     \int_{\Pp(\Vv)} & \psi (\lambda) \, \di \Lambda^{ N_{k}}_{t} (\lambda) -     \int_{\Pp(\Vv)} \psi (\lambda) \, \di \Lambda^{ N_{k}}_{s} (\lambda) 
        \\
        &
        \leq  C \left(  \| D\psi\|_{L^{\infty} (\Pp(\Vv))} + \| D^{2}\psi\|_{L^{\infty} (\Pp(\Vv))} \right) | t - s| + C \mathfrak{w}_{D^{2}\psi} \left(\frac{1}{N_k}\right) \,.  \nonumber
\end{align}
We select a dense and at most countable subset $D$ of $[0, T]$. By Lemma~\ref{l:dZ} and by a diagonal argument, we may find a subsequence independent of $t \in D$ (not relabeled) and $\Lambda_{t} \in \Pp(\Pp(\Vv))$ such that 
\begin{align*}
    \lim_{k\to \infty} \di_{\mathcal{Z}} (\Lambda^{N_{k}}_{t}, \Lambda_{t}) = \lim_{k \to \infty} W_{1} (\Lambda^{N_{k}}_{t}, \Lambda_{t}) = 0 \qquad \text{for $t \in D$}\,.
\end{align*}
Notice that we exploit the compactness of $(\Pp(\Vv), \| \cdot\|_{\rm BL})$ for the convergence in $W_{1}$. For every $s<t \in D$ and every $\psi \in C^{2} (\Pp(\Vv))$ we pass to the limit in~\eqref{e:something} as $k \to \infty$, obtaining
\begin{align}
    \label{e:130}
    \int_{\Pp(\Vv)} \psi(\lambda) \, \di (\Lambda_{t} - \Lambda_{s}) (\lambda) \leq C \big( \| D \psi\|_{L^{\infty} (\Pp(\Vv))} + \| D^{2} \psi\|_{L^{\infty} (\Pp(\Vv))} \big)  |t - s|\,.
\end{align}
In particular,~\eqref{e:130} holds for $\psi \in \mathcal{Z}$. Hence,
\begin{align}
    \label{e:131}
    \di_{\mathcal{Z}} (\Lambda_{t}, \Lambda_{s}) \leq C | t - s| \qquad \text{for $s, t \in D$.}
\end{align}

We now have to extend $(\Lambda_{t})_{t \in D}$ to the whole interval $[0, T]$. For $t \in [0, T] \setminus D$, we define $\Lambda_{t} \in \Pp(\Pp( \Vv))$ as limit in $\di_{\mathcal{Z}}$ of $\Lambda_{t_{j}}$ for a sequence $t_{j} \in D$ such that $t_{j} \to t$. First we show that such a definition is well-posed. Indeed, by Lemma~\ref{l:dZ} we have that, for every $t_{j} \in D$ with $t_{j} \to t$, there exists $\Sigma \in \Pp(\Pp(\Vv))$ such that, up to a not relabeled subsequence, 
\begin{align}
\label{e:190000}
    \lim_{j \to \infty} \di_{\mathcal{Z}} (\Lambda_{t_{j}}, \Sigma) = \lim_{j \to \infty} W_{1}(\Lambda_{t_{j}}, \Sigma) = 0\,.
\end{align}
Let $s_{j} \in D$ and $\overline{\Sigma} \in \Pp(\Pp(\Vv))$ be such that $s_{j} \to t$ and
\begin{align*}
    \lim_{j \to \infty} \di_{\mathcal{Z}} (\Lambda_{s_{j}}, \overline\Sigma) = \lim_{j \to \infty} W_{1}(\Lambda_{s_{j}}, \overline\Sigma) = 0\,.
\end{align*}
By~\eqref{e:131} we have that
\begin{align}
\label{e:190001}
  \di_{\mathcal{Z}} (\Sigma, \overline{\Sigma}) = \lim_{j\to\infty}  \di_{\mathcal{Z}} (\Lambda_{t_{j}}, \Lambda_{s_{j}}) \leq \lim_{j \to \infty} \, C | t_{j} - s_{j}| = 0\,.
\end{align}
Thus, it must be $\Sigma = \overline{\Sigma}$. This implies that $\Lambda_{t} \in \Pp(\Pp(\Vv))$ is well-defined for every $t \in [0, T]$. 

\noindent {\em Step 1.2: Pointwise convergence and continuity.} We now show  that 
\begin{align}
    \label{e:limit}
    \lim_{k \to \infty} \di_{\mathcal{Z} } ( \Lambda^{ N_{k}}_{t} , \Lambda_{t}) = \lim_{k \to \infty} W_{1} ( \Lambda^{ N_{k}}_{t}, \Lambda_{t}) = 0\qquad \text{for every $t \in [0, T]$.}
\end{align}
Notice that~\eqref{e:limit} holds for $t \in D$ by construction. Let us fix $t \in [0, T] \setminus D$ and let $t_{j} \in D$ be such that $t_{j} \to t$ and $\Lambda_{t_{j}} \to \Lambda_{t}$ both in $\di_{\mathcal{Z}}$ and in $W_{1}$. We may further assume that, along a not relabeled subsequence,
\begin{align*}
    \lim_{k \to \infty} \di_{\mathcal{Z}} (\Lambda^{ N_{k}}_{t}, \overline{\Lambda}) = \lim_{k \to \infty} W_{1} (\Lambda^{ N_{k}}_{t}, \overline{\Lambda}) = 0 \qquad \text{for some $\overline{\Lambda} \in \Pp(\Pp(\Vv))$.}
\end{align*}
In view of Corollary~\ref{c:almost-equation-2} and of~\eqref{e:scalings} we have that for every $\psi \in C^{2} (\Pp(\Vv))$ it holds
\begin{align}
    \label{e:134}
    \!\!\!\!\! \int_{\Pp(\Vv)} \!\!\!\! \psi(\lambda)  \di (\Lambda^{ N_{k}}_{t} - \Lambda^{ N_{k}}_{t_{j}}) (\lambda) \leq & \ C \big( \| D \psi\|_{L^{\infty} (\Pp(\Vv))} + \| D^{2}\psi\|_{L^{\infty} (\Pp(\Vv))} \big) | t - t_{j} |   +C \mathfrak{w}_{D^{2}\psi} \left(\frac{1}{N_k}\right).
\end{align}
Passing to the limit as $k \to \infty$ in~\eqref{e:134} we deduce that for every $\psi \in C^{2} (\Pp(\Vv))$
\begin{align}
    \label{e:135}
    \int_{\Pp(\Vv)} \psi(\lambda) \, \di (\overline{\Lambda} - \Lambda_{t_{j}}) (\lambda) \leq C \big( \| D \psi\|_{L^{\infty} (\Pp(\Vv))} + \| D^{2}\psi\|_{L^{\infty} (\Pp(\Vv))} \big) | t - t_{j} |  \,.
\end{align}
Thus, we infer that
\begin{align}
    \label{e:136}
   \di_{\mathcal{Z}} ( \overline{\Lambda} , \Lambda_{t_{j}})  \leq C  | t - t_{j} |  \,.
\end{align}
Passing to the limit as $j \to \infty$ in~\eqref{e:136} we conclude that $\overline{\Lambda} = \Lambda_{t}$. Hence, \eqref{e:limit} holds.

We further notice that~\eqref{e:131} can be extended to every $s, t \in [0, T]$, which implies that $t\mapsto \Lambda_{t}$ is Lipschitz continuous from $[0, T]$ with values in $(\Pp(\Pp(\Vv)), \di_{\mathcal{Z}})$. This, together with the compactness of~$(\Pp(\Pp(\Vv)), W_{1})$ implies that $t\mapsto \Lambda_{t}$ is continuous in the $W_{1}$-metric as well. This concludes the proof of the compactness step.

\smallskip
\paragraph{\bf Step 2: The limit equation.}
     For $\psi \in C^{2}([0, T] \times \Pp(\Vv))$ we pass to the limit term by term in~\eqref{e:approx_eq-2}. In view of the $W_{1}$-convergence of $\Lambda^{N_{k}}_{t}$ to $\Lambda_{t}$ for $t \in [0, T]$, by the Dominated Convergence Theorem we immediately deduce that
\begin{align}
\label{e:time-der-conv}
    \lim_{k \to \infty} \int_{0}^{T} \int_{\Pp(\Vv)} \partial_{t} \psi(t, \lambda) \, \di \Lambda^{N_{k}}_{t} (\lambda) \, \di t = \int_{0}^{T} \int_{\Pp(\Vv)} \partial_{t} \psi(t, \lambda) \, \di \Lambda_{t} (\lambda) \, \di t  \,.
\end{align}
We notice that, by Lemma~\ref{l:distance},
\begin{align*}
   \lim_{k \to \infty}  W_{1} ( \overline{\Lambda}^{N_{k}}_{t}, \Lambda_{t})  = 0 \qquad \text{for every $t \in [0, T]$.} 
\end{align*}
Moreover, $\lambda \mapsto ( \pi_{\lambda} - \overline{\pi}_{\lambda}) \lambda$ is continuous from $(\Pp(\Vv), \| \cdot\|_{\rm BL})$ into $(E_{\Pp(\Vv)}, \| \cdot\|_{\rm BL})$ in view of $(\pi.1)$ and of~\eqref{e:pi-bdd}. Hence, the Dominated Convergence Theorem yields
\begin{align}
\label{e:ineeditafter}
    \lim_{k \to \infty}  \frac{w_{k}}{\tau_{k} N_{k}} \int_{0}^{T} \int_{\Pp(\Vv)} &  D_{\lambda} \psi(t, \lambda) [(\pi_{\lambda} - \overline{\pi}_{\lambda} ) \lambda] \, \di \overline{\Lambda}^{ N_{k}}_{t} (\lambda) \, \di t 
    \\
    &
    = \gamma\int_{0}^{T}\int_{\Pp(\Vv)}  D_{\lambda} \psi(t, \lambda) [(\pi_{\lambda} - \overline{\pi}_{\lambda} ) \lambda] \, \di \Lambda_{t} (\lambda) \, \di t  \,. \nonumber
\end{align}

We prove that for every $t \in [0, T]$ the map
\begin{align}
\label{e:145}
    \lambda \mapsto \int_{\Vv} \int_{\Vv} D^{2}_{\lambda} \psi(t, \lambda) [\delta_{\sigma} - \delta_{\overline{\sigma}} , \delta_{\sigma} - \delta_{\overline{\sigma}} ]\, \di \lambda(\sigma) \, \di \lambda(\overline{\sigma})
\end{align}
is continuous from $(\Pp(\Vv) , \| \cdot\|_{\rm BL})$ into $\R$. If this is the case, the convergence of $\overline{\Lambda}^{ N_{k}}_{t}$ to $\Lambda_{t}$ in the $1$-Wasserstein distance and the Dominated Convergence Theorem yield
\begin{align}
\label{e:146}
    \lim_{k \to \infty}  &  \, \frac{1}{2 \tau_{k}  N_{k}^{2}} \int_{0}^{T} \int_{\Pp(\Vv)} \int_{\Vv} \int_{\Vv} D^{2}_{\lambda} \psi(t, \lambda) [\delta_{\sigma} - \delta_{\overline{\sigma}} , \delta_{\sigma} - \delta_{\overline{\sigma}} ]\, \di \lambda(\sigma) \, \di \lambda(\overline{\sigma})\, \di \overline{\Lambda}^{ N_{k}}_{t} (\lambda) \, \di t 
    \\
    &
    = \frac{\nu}{2 } \int_{0}^{T} \int_{\Pp(\Vv)} \int_{\Vv} \int_{\Vv} D^{2}_{\lambda} \psi(t, \lambda) [\delta_{\sigma} - \delta_{\overline{\sigma}} , \delta_{\sigma} - \delta_{\overline{\sigma}} ]\, \di \lambda(\sigma) \, \di \lambda(\overline{\sigma})\, \di \Lambda_{t} (\lambda) \, \di t \,. \nonumber
\end{align}
For $\lambda_{j}, \lambda \in \Pp(\Vv)$ such that $\| \lambda_{j} - \lambda\|_{\rm BL}\to 0$ we estimate
\begin{align}
\label{e:140}
    \bigg| \int_{\Vv}  \int_{\Vv} & D^{2}_{\lambda} \psi(t, \lambda) [\delta_{\sigma} - \delta_{\overline{\sigma}} , \delta_{\sigma} - \delta_{\overline{\sigma}} ]\, \di \lambda( \sigma ) \, \di \lambda(\overline{\sigma}) - \int_{\Vv} \int_{\Vv} D^{2}_{\lambda} \psi(t, \lambda_{j} ) [\delta_{\sigma} - \delta_{\overline{\sigma}} , \delta_{\sigma} - \delta_{\overline{\sigma}} ]\, \di \lambda_{j}(\sigma) \, \di \lambda_{j}(\overline{\sigma}) \bigg| 
    \\
    &
    \leq \bigg| \int_{\Vv} \int_{\Vv} \Big( D^{2}_{\lambda} \psi(t, \lambda_{j}) - D^{2}_{\lambda} \psi(t, \lambda) \Big) [\delta_{\sigma} - \delta_{\overline{\sigma}} , \delta_{\sigma} - \delta_{\overline{\sigma}} ]\, \di \lambda_{j}(\sigma) \, \di \lambda_{j}(\overline{\sigma}) \bigg| \nonumber
    \\
    &
    \quad + \bigg| \int_{\Vv} \int_{\Vv} D^{2}_{\lambda} \psi(t, \lambda) [\delta_{\sigma} - \delta_{\overline{\sigma}} , \delta_{\sigma} - \delta_{\overline{\sigma}} ]\, \di ( \lambda_{j} - \lambda) (\sigma) \, \di \lambda_{j}(\overline{\sigma})  \bigg| \nonumber
    \\
    &
    \quad + \bigg| \int_{\Vv} \int_{\Vv} D^{2}_{\lambda} \psi(t, \lambda) [\delta_{\sigma} - \delta_{\overline{\sigma}} , \delta_{\sigma} - \delta_{\overline{\sigma}} ]\, \di \lambda (\sigma) \, \di ( \lambda_{j} - \lambda )(\overline{\sigma})  \bigg| \,. \nonumber
\end{align}
The first term on the right-hand side of~\eqref{e:140} can be estimated by
\begin{align}
\label{e:143}
\!\!    \bigg| \int_{\Vv} \int_{\Vv} & \Big( D^{2}_{\lambda} \psi(t, \lambda_{j}) - D^{2}_{\lambda} \psi(t, \lambda) \Big) [\delta_{\sigma} - \delta_{\overline{\sigma}} , \delta_{\sigma} - \delta_{\overline{\sigma}} ]\, \di \lambda_{j}(\sigma) \, \di \lambda_{j}(\overline{\sigma}) \bigg| 
    \leq 4 \mathfrak{w}_{D^{2}\psi} ( \| \lambda_{j} - \lambda\|_{\rm BL})\,. 
\end{align}
As for the second term on the right-hand side of~\eqref{e:140}, we notice that the map
\begin{align}
\label{e:141}
    \sigma \mapsto D^{2}_{\lambda} \psi(t, \lambda) [\delta_{\sigma} - \delta_{\overline{\sigma}} , \delta_{\sigma} - \delta_{\overline{\sigma}} ]
\end{align}
is Lipschitz continuous from $(\Vv, \di)$ into $\R$, with uniform Lipschitz constant with respect to~$\overline{\sigma} \in \Vv$. Indeed, for every $\sigma_{1}, \sigma_{2}, \overline{\sigma} \in \Vv$ we have that
\begin{align*}
    & \Big| D^{2}_{\lambda} \psi(t, \lambda) [\delta_{\sigma_{1}} - \delta_{\overline{\sigma}} , \delta_{\sigma_{1}} - \delta_{\overline{\sigma}} ] -  D^{2}_{\lambda} \psi(t, \lambda) [\delta_{\sigma_{2}} - \delta_{\overline{\sigma}} , \delta_{\sigma_{2}} - \delta_{\overline{\sigma}} ] \Big|
    \\
    &
    \qquad \leq \Big| D^{2}_{\lambda} \psi(t, \lambda) [\delta_{\sigma_{1}} - \delta_{\overline{\sigma}} , \delta_{\sigma_{1}} - \delta_{\overline{\sigma}} ] -  D^{2}_{\lambda} \psi(t, \lambda) [\delta_{\sigma_{2}} - \delta_{\overline{\sigma}} , \delta_{\sigma_{1}} - \delta_{\overline{\sigma}} ] \Big|
    \\
    &
    \qquad \quad + \Big| D^{2}_{\lambda} \psi(t, \lambda) [\delta_{\sigma_{2}} - \delta_{\overline{\sigma}} , \delta_{\sigma_{1}} - \delta_{\overline{\sigma}} ] -  D^{2}_{\lambda} \psi(t, \lambda) [\delta_{\sigma_{2}} - \delta_{\overline{\sigma}} , \delta_{\sigma_{2}} - \delta_{\overline{\sigma}} ] \Big|
    \\
    &
    \qquad \leq \| D^{2}_{\lambda} \psi\|_{L^{\infty} ([0, T] \times \Pp(\Vv))} \| \delta_{\sigma_{1}} - \delta_{\sigma_{2}}\|_{\rm BL}  \Big( \| \delta_{\sigma_{1}} - \delta_{\overline{\sigma}}\|_{\rm BL} +  \| \delta_{\sigma_{2}} - \delta_{\overline{\sigma}}\|_{\rm BL}\Big)
    \\
    &
    \qquad \leq 4 \| D^{2}_{\lambda} \psi\|_{L^{\infty} ([0, T] \times \Pp(\Vv))} \, \di (\sigma_{1}, \sigma_{2})\,.
\end{align*}
In particular, the Lipschitz constant of the map defined in~\eqref{e:141} is controlled from above by $4 \| D^{2}_{\lambda} \psi\|_{L^{\infty} ([0, T] \times \Pp(\Vv))} $. This implies that
\begin{align}
\label{e:142}
    \bigg| \int_{\Vv} \int_{\Vv} & D^{2}_{\lambda} \psi(t, \lambda) [\delta_{\sigma} - \delta_{\overline{\sigma}} , \delta_{\sigma} - \delta_{\overline{\sigma}} ]\, \di ( \lambda_{j} - \lambda) (\sigma) \, \di \lambda_{j}(\overline{\sigma})  \bigg|
    \\
    &
    \leq \vphantom{\int_{\Vv}} 8 \| D^{2}_{\lambda} \psi\|_{L^{\infty} ([0, T]\times \Pp(\Vv))} \| \lambda_{j} - \lambda\|_{\rm BL}\,. \nonumber
\end{align}
In the same way we deduce that 
\begin{align}
    \label{e:144}
     \bigg| \int_{\Vv} \int_{\Vv} & D^{2}_{\lambda} \psi(t, \lambda) [\delta_{\sigma} - \delta_{\overline{\sigma}} , \delta_{\sigma} - \delta_{\overline{\sigma}} ]\, \di \lambda (\sigma) \, \di ( \lambda_{j} - \lambda) (\overline{\sigma})  \bigg|
     \\
     &
     \leq \vphantom{\int_{\Vv}} 8 \| D^{2}_{\lambda} \psi\|_{L^{\infty} ([0, T]\times \Pp(\Vv))} \| \lambda_{j} - \lambda\|_{\rm BL}\,. \nonumber
\end{align}
Combining~\eqref{e:143}, \eqref{e:142}, and~\eqref{e:144} we conclude the map in~\eqref{e:145} is continuous and~\eqref{e:146} holds.

Finally, passing to the limit in~\eqref{e:approx_eq-2} we infer~\eqref{e:fleming-viot}. This concludes the proof of Theorem~\ref{t:compactness-1}.

\subsection{Proof of Theorem~\ref{t:compactness-3}}
\label{sub:thm2}
Under the scalings~\eqref{e:scalings-2}, we now prove the convergence of $\Lambda^{N_{k}}$ to a solution to the replicator equation~\eqref{e:weak-replicator}. The proof is similar to that of Theorem~\ref{t:compactness-1}. Below we remark the main differences.

The compactness argument follows the line of the proof of Theorem~\ref{t:compactness-1}. We fix $D \subseteq [0, T]$ dense and at most countable. We select a not relabeled subsequence and, for every $t \in D$, a probability measure $\Lambda_{t} \in \Pp(\Pp(\Vv))$ such that $W_{1}(\Lambda^{N_{k}}_{t}, \Lambda_{t}) \to 0$. For every $s < t \in D$ we write~\eqref{e:approx_eq-3} and pass to the limit as $k \to \infty$. In view of~\eqref{e:scalings-2} we obtain that there exists $C >0$ independent of $s, t\in D$ such that for every $\psi \in C^{2} (\Pp(\Vv))$
\begin{align}
    \label{e:130-2}
    \int_{\Pp(\Vv)} \psi(\lambda) \, \di (\Lambda_{t} - \Lambda_{s}) (\lambda) \leq C \| D \psi\|_{L^{\infty} (\Pp(\Vv))}   |t - s|\,.
\end{align}
By Lemma \ref{l:density} it is possible to extend~\eqref{e:130-2} to functions $\psi \in {\rm Lip} (\Pp(\Vv))$ in the form
\begin{align}
    \label{e:130-3}
    \int_{\Pp(\Vv)} \psi(\lambda) \, \di (\Lambda_{t} - \Lambda_{s}) (\lambda) \leq C {\rm Lip} (\psi)   |t - s| \qquad \text{for $s < t \in D$}.
\end{align}
Then,~\eqref{e:130-3} implies that
\begin{align}
    \label{e:130-4}
    W_{1} (\Lambda_{s}, \Lambda_{t}) \leq C | t - s| \qquad \text{for every $s, t \in D$.}
\end{align}
With~\eqref{e:130-4} at hand, it is enough to repeat the argument of~\eqref{e:190000}--\eqref{e:190001} to construct a limit curve $t \mapsto \Lambda_{t}$ with values in $\Pp(\Pp(\Vv))$, simply by replacing $\di_{\mathcal{Z}}$ with the $W_{1}$-distance. 

The pointwise convergence of $\Lambda^{N_{k}}$ to $\Lambda$ is obtained reasoning as in~\eqref{e:134}--\eqref{e:136}. Precisely, for $t \in D$ we have that
$W_{1} (\Lambda^{N_{k}}_{t}, \Lambda_{t}) \to 0$ by construction. For $t \in [0, T] \setminus D$, let $t_{j} \in D$ be such that $t_{j} \to t$ and assume that $W_{1} (\Lambda_{t}^{N_{k}} , \overline{\Lambda}) \to 0$ for some $\overline{\Lambda} \in \Pp(\Pp(\Vv))$ (which holds up to a subsequence by compactness). We write~\eqref{e:approx_eq-3} for $t_{j}, t$ and $\psi \in C^{2} (\Pp(\Vv))$. Pass to the limit as $k \to \infty$, by~\eqref{e:scalings-2} we infer, similarly to~\eqref{e:130-2},
\begin{align}
    \label{e:180000}
    \int_{\Pp(\Vv)} \psi(\lambda) \, \di (\overline{\Lambda} - \Lambda_{t_{j}}) (\lambda) \leq C \| D \psi\|_{L^{\infty} (\Pp(\Vv))}   |t - t_{j}|\,.
\end{align}
for some constant~$C>0$ independent of~$\psi$, $t$, and~$j$. As before, we can extend~\eqref{e:180000} to tests $\psi \in {\rm Lip} (\Pp(\Vv))$ and thus conclude that
\begin{align*}
    W_{1} (\overline{\Lambda}, \Lambda_{t_{j}}) \leq C|t - t_{j}|\,.
\end{align*}
By construction of~$\Lambda_{t}$, this implies that $\overline{\Lambda} = \Lambda_{t}$ and $W_{1} ( \Lambda^{N_{k}}_{t} , \Lambda_{t}) \to 0$ for every $t \in [0, T]$. The Lipschitz continuity of $t \mapsto \Lambda_{t}$ with respect to the $W_{1}$-distance follows from~\eqref{e:130-4}.

Repeating the arguments of~\eqref{e:time-der-conv} and~\eqref{e:ineeditafter} we infer that $\Lambda$ is a weak solution of the replicator equation~\eqref{e:weak-replicator}. This concludes the proof of Theorem~\ref{t:compactness-3}.

\subsection{Proof of Theorem~\ref{t:compactness-3-strong}}
\label{sub:thm3-strong}

Let $N_{k} \in \mathbb{N} \setminus \{0\}$ such that $N_{k} \to +\infty$ as $k \to \infty$. Assume the scaling \eqref{e:scaling-strong}. Then the proof for the non-vanishing selection parameter case (Assumption \eqref{strong-sel-ass}) follows along the same lines as the proof of Theorem \ref{t:compactness-3} noting that \eqref{e:scaling-strong} implies 
$$
\lim_{k\to +\infty} \frac{1}{\tau_k N_k^2} = 0.
$$
It is worth emphasizing that the sole modification consists in checking the continuity from $(\Pp(\Vv), \| \cdot\|_{\rm BL})$ into $(E_{\Pp(\Vv)}, \| \cdot\|_{\rm BL})$ of the following function given by Corollary \ref{c:almost-equation}
$$
\lambda \mapsto \frac{w(\pi_\lambda-\overline{\pi}_\lambda)\lambda}{1+w(\overline{\pi}_\lambda-1)}.
$$
This property is a direct consequence of $(\pi.1)$ and of~\eqref{e:pi-bdd}. Recalling that $(\Pp(\Vv), \|\cdot\|_{\mathrm{BL}})$ is compact in $\mathcal{F}(\Vv)$, we further infer that
$$
\min_{\lambda\in\Pp(\Vv)} \left(1+w(\overline{\pi}_\lambda-1)\right) >0.
$$

\section*{Acknowledgment}

The authors are members of the Gruppo Nazionale per l'Analisi Matematica, la Probabilit\`a e le loro Applicazioni (INdAM-GNAMPA) and acknowledge the support of the INdAM-GNAMPA 2026 Project ``Sistemi multi-agente e replicatore: derivazione particellare e ottimizzazione'' CUP E53C25002010001. The work of SA was further supported by the FRA2022 Project ``ReSinAPAS'' and the STAR2024 project ``Effective Asymptotics of Microscopic and Nonlocal Interactions'' (CUP E63C22004420003). GO was partially supported by the Italian Ministry of University and Research under the Programme ``Department of Excellence'' Legge 232/2016 (Grant No.\ CUP - D93C23000100001).

\end{document}